\documentclass{article}
\usepackage[utf8]{inputenc}
\usepackage{amssymb,amsmath, amsthm, mathabx,mathtools}
\usepackage{amsmath,amsfonts,amssymb,amsthm,epsfig,epstopdf,titling,url,array}
\usepackage{color, enumerate}
\usepackage{comment, authblk}
\usepackage{hyperref}
\hypersetup{
	colorlinks,
	citecolor=blue,
	linkcolor=blue,
	urlcolor=blue}
\usepackage{pgfplots}
\usepackage{bm}
\usepackage{stmaryrd}
\usepackage{graphicx}
\usepackage{caption}
\usepackage{subcaption}
\usepackage{titlesec}
\usepackage{epstopdf}

\usepackage{sectsty}
\usepackage{lipsum}
\usepackage{algorithm}
\usepackage[noend]{algpseudocode}
\usepackage{algpseudocode}
\usepackage{pifont}
\usepackage{tikz}
\usetikzlibrary{positioning}
\usepackage{longtable}
\usepackage{enumitem}
\usetikzlibrary{shapes.geometric, arrows.meta, positioning, calc, snakes}

\titleformat*{\subsection}{\large\bfseries}
\numberwithin{equation}{section}

\pgfplotsset{compat=newest}
\pgfplotsset{plot coordinates/math parser=false}
\newlength\figureheight
\newlength\figurewidth

\newcommand{\Rprod}[2]{\langle #1 , #2 \rangle}

\def\cF{{\mathcal F}}

\newcommand{\zb}{\mathbf{z}}


\usepackage{enumerate}
\usepackage{mathrsfs}
\usepackage{wrapfig}

\flushbottom

\usepackage{color}

\usepackage{a4wide}
\usepackage[a4paper, total={6in, 8in}]{geometry}

\topmargin=-0.15in
\oddsidemargin=0in
\evensidemargin=0in
\textwidth=6.5in

\numberwithin{equation}{section}

\newcommand{\xb}{\mathbf{x}}
\newcommand{\yb}{\mathbf{y}}

\newcommand{\Dr}{\Delta_\varrho}

\newcommand{\Kb}{\mathbf{K}}

\newcommand{\Db}{\mathbf{D}}
\newcommand{\Lb}{\mathbf{L}}
\newcommand{\Ib}{\mathbf{I}}

\newcommand{\Ka}{\widetilde{\mathbf{K}}}
\newcommand{\Da}{\widetilde{\mathbf{D}}}
\newcommand{\Lc}{\widetilde{\mathbf{L}}}

\newcommand{\beq}{\begin{equation}}
	\newcommand{\bEq}{\end{equation}}

\newcommand{\be}{\begin{equation}}
	\newcommand{\ee}{\end{equation}}

\renewcommand{\div}{\mathop{\mathrm{div}}}

\usepackage{amsmath} 
\usepackage{amssymb}
\usepackage{amsthm}

\setlength{\unitlength}{1cm}




\newcommand{\dd}{\mathrm d}



\renewcommand{\epsilon}{\varepsilon}
\renewcommand{\leq}{\leqslant}
\renewcommand{\geq}{\geqslant}
\newcommand{\floor}[1] {\lfloor {#1} \rfloor}



\renewcommand{\le}{\leq}
\renewcommand{\ge}{\geq}


\renewcommand{\P}{\mathbb{P}}
\newcommand{\E}{\mathbb{E}}
\newcommand{\R}{\mathbb{R}}
\newcommand{\C}{\mathbb{C}}
\newcommand{\N}{\mathbb{N}}


\newcommand{\norm}[1]{\lVert #1 \rVert}
\newcommand{\normp}[1]{\lVert #1 \rVert_p}
\newcommand{\normb}[1]{\bigl\lVert #1 \bigr\rVert}

\newcommand{\normbb}[1]{\biggl\lVert #1 \biggr\rVert}

\newcommand{\normf}[1]{\lVert #1 \rVert_{\mathcal{F}}}
\newcommand{\normbf}[1]{\biggl\lVert #1 \biggr\rVert_{\mathcal{F}}}

\DeclareMathOperator{\OO}{O}
\DeclareMathOperator{\oo}{o}

\DeclareMathOperator{\OOp}{\OO_{\mathbb{P}}}
\DeclareMathOperator{\oop}{\oo_{\mathbb{P}}}

\DeclareMathOperator{\sL}{\mathcal{L}}

\newcommand{\sLn}{\widetilde{\mathbf{\mathcal{L}}}_n}

\DeclareMathOperator{\sig}{\mathsf{s}}

\theoremstyle{plain} 
\newtheorem{theorem}{Theorem}[section]
\newtheorem*{theorem*}{Theorem}
\newtheorem{lemma}[theorem]{Lemma}

\newtheorem*{lemma*}{Lemma}

\newtheorem*{corollary*}{Corollary}
\newtheorem{proposition}[theorem]{Proposition}
\newtheorem*{proposition*}{Proposition}

\newtheorem{definition}[theorem]{Definition}
\newtheorem*{definition*}{Definition}
\theoremstyle{remark}
\newtheorem{example}[theorem]{Example}
\newtheorem*{example*}{Example}
\newtheorem{remark}[theorem]{Remark}

\newtheorem*{remark*}{Remark}
\newtheorem*{remarks*}{Remarks}

\setcounter{secnumdepth}{5}
\setcounter{tocdepth}{2}

\makeatletter
\def\env@dmatrix{\hskip -\arraycolsep
	\let\@ifnextchar\new@ifnextchar
	\extrarowheight=2ex
	\array{*\c@MaxMatrixCols{>{\displaystyle}c}}}

\date{}

\allowdisplaybreaks

\title{On the edge eigenvalues of  sparse random geometric graphs}
\author[1]{Xiucai Ding \thanks{E-mail: xcading@ucdavis.edu. The author is partially supported by NSF DMS-2306439 and DMS-2515104.}}
\author[1]{Yichen Hu  \thanks{E-mail: ethhu@ucdavis.edu. The author is partially supported by NSF DMS-2515104.}}

\affil[1]{Department of Statistics, University of California, Davis}

\begin{document}
	\maketitle
	\begin{abstract}
In this paper, we study the edge eigenvalues of random geometric graphs (RGGs) generated by multivariate Gaussian samples in the sparse regime under a broad class of distance metrics. Previous work on edge eigenvalues under related setups has relied on methods based on integral operators or the Courant–Fischer min–max principle with interpolation. However, these approaches typically require either a dense regime or sampling distributions that are compactly supported and non-vanishing, and therefore cannot be generalized to our setting. We introduce a two-step smoothing–matching argument. First, we construct a smoothed empirical operator from the given RGG. We then match its edge eigenvalues to those of a continuum limit operator via a counting argument. We show that, after proper normalization, the first few nontrivial edge eigenvalues of the RGG converge with high probability to those of a differential operator, which can be computed explicitly through a simple second-order linear partial differential equation. To the best of our knowledge, these are the first results on the edge eigenvalues of RGGs generated from samples with unbounded support and vanishing density functions.
	\end{abstract}
	
	\section{Introduction}
	Random geometric graphs (RGGs) \cite{duchemin2023random,penrose2003random} have been widely employed in modern data science to uncover latent geometric structures underlying complex datasets, wherein each node is associated with a feature vector and edges are established based on geometric information--especially the pairwise distances between these features--rather than being formed randomly according to prescribed probabilities. For example, RGGs provide a tractable framework for modeling latent spatial structures in network analysis, where edges are formed based on underlying geometric spatial relationships. This framework has found broad applications in areas such as {wireless networks \cite{haenggi2009stochastic}}, social networks \cite{hoff2002latent}, biological networks \cite{betzel2019distance,estrada2016epidemic,higham2008fitting,preciado2009spectral}, community detection \cite{davis2018consistency,peche2020robustness}, {and consensus dynamics \cite{estrada2016consensus}}. As another example, in manifold learning, many graph-based algorithms, such as Diffusion Maps (DM) \cite{coifman2006diffusion}, Laplacian Eigenmaps (LE) \cite{belkin2003laplacian}, and Isomap \cite{tenenbaum2000global}, are widely used to extract the underlying geometric structures. These algorithms typically begin by constructing a random geometric graph in which edges are formed based on pairwise distances between samples, providing a discrete approximation of the underlying manifold.
	
	Broadly speaking, RGGs can be constructed as follows. Suppose we observe a point cloud $\mathcal{X} \subset \mathbb{R}^d$ consisting of $n$ independently and identically distributed $d$-dimensional samples $\xb_1, \xb_2, \dots, \xb_n $, for some fixed $d\in\N^+$. Given a radius $r_n \equiv r_n(d)>0$, a metric $\normp{\cdot}$ for $1\le p\le\infty$ on $\mathbb{R}^d$ and a kernel function $g \equiv g(n,d):[0,\infty)\to\mathbb{R}^+$, to construct the random geometric graph, a common approach is to connect the samples $\xb_i$ and $\xb_j$ whenever $\normp{\xb_i-\xb_j} \leq r_n$. If an edge exists between $\xb_i$ and $\xb_j$, a weight $g(\normp{\xb_i-\xb_j}/r_n)$ is assigned. Consequently, the $n \times n$ affinity matrix $\Kb = (\Kb(i,j))$ is defined as
	\begin{equation}\label{eq_originalK} 
		\Kb(i,j) = \mathbf{1}(\normp{\xb_i-\xb_j}\leq r_n) g(\normp{\xb_i-\xb_j}/r_n), 
	\end{equation}
	where the precise assumptions on $g$ and $r_n$ will be given in Section \ref{sec:mainresult}. Many methodologies and algorithms focus on the Laplacian matrix $\Db-\Kb$ and, in particular, the random-walk Laplacian matrix
	\begin{equation} 
		\Lb_{rw} = \mathbf{I}-\Db^{-1}\Kb, \label{eq:1.1}
	\end{equation} 
	where $\Db = \operatorname{diag}(\Db(i,i))$ with $\Db(i,i) = \sum_{j=1}^n \Kb(i,j)$.
	
	For practical application of RGGs, {many} methodologies and algorithms {rely crucially on the choice of $r_n$.} We illustrate this with manifold learning algorithms. DM employs a complete graph, meaning all $\xb_i$ and $\xb_j$ are connected, which is equivalent to selecting a sufficiently large $r_n$. In contrast, LE and Isomap require {a smaller} $r_n$ for their implementation. Despite the wide application, however, the choice of $r_n$ (equivalently, the construction of the graph) is typically guided by heuristics or conventional empiricism, with no systematic method for selection, except for some discussions under mixture models in \cite{maier2009optimal}. This is primarily due to a lack of theoretical understanding of these methods, particularly regarding the spectral behavior of the matrix $\Lb_{rw}$, especially when $\{\xb_j\}$ has unbounded support. While the spectrum of some other random graphs, e.g., the \emph{Erdős–Rényi graph} (ERG) \cite{Bollobas2001}, has been extensively studied, {far less is known for the spectrum--especially, the edge eigenvalues (the smallest eigenvalues)--of $\Kb$ and $\Lb_{rw}$ of the RGG, particularly when the sampling distribution has unbounded support.}
	
	Motivated by these challenges, this paper studies the edge eigenvalues of $\Lb_{rw}$ for general classes of kernel functions $g$ and metrics $\normp{\cdot}$ for $1\le p\le\infty$ in the sparse regime, where $r_n$ is chosen so that the average degree is $\mathrm{o}(n)$. Unlike most of the existing literature like \cite{adhikari2022spectrum,garcia2020error,trillos2021geometric,li2025central}, we do not require compact support or densities bounded from above and below. {Concretely, we focus on samples with multivariate Gaussian distribution, one of the most common distributions with unbounded support.} In the following sections, we will summarize related results in Section \ref{intro_related} and provide an overview of our findings and contributions in Section \ref{intro_new}.
	
	\subsection{Some related results on RGGs and open questions}\label{intro_related}
	RGGs have been studied from the perspective of graph theory, with research focusing on various aspects such as component counts, typical vertex degrees, clique detection, component connectivity, and their differences from classical Erdős–Rényi graphs. For a comprehensive overview, we refer readers to the monograph \cite{penrose2003random} and the recent review paper \cite{duchemin2023random}. 
	
	Regarding the edge eigenvalues of RGGs through the matrices \eqref{eq_originalK} and \eqref{eq:1.1} with fixed $d$, most work has been conducted under compact support and bounded density assumptions. With uniform samples on $[-1,1]^d$ and fixed $r_n$, the behavior of the edge eigenvalues of $\Lb_{rw}$ was studied in  \cite{adhikari2022spectrum} for $\norm{\cdot}_\infty$ metric. When $r_n=\oo(1)$, Cheeger’s inequality (e.g., \cite{chung1997spectral})
	implies that the second edge eigenvalue tends to 0, which is very different from the constant $r_n$ case. To obtain nontrivial limits, \cite{garcia2020error,trillos2021geometric}
	analyzed a rescaled $\Lb_{rw}$ with factor of order $r_n^{-2}$ and proved convergence
	(with rates) to the edge eigenvalues of a differential operator, for compactly supported
	densities and $\norm{\cdot}_2$. Subsequently, \cite{li2025central} derived asymptotic
	distributions for these edge eigenvalues. This line of work nearly completes the discussion of the compact
	support case in the sparse regime.
	On the other hand, the empirical spectral density (ESD) of $\Lb_{rw}$ has also been studied in the sparse regime. 
	For uniform samples
	on $[0,1]^d$ with $g\equiv1$, the ESD of $\Lb_{rw}$ converges to that of  a deterministic graph
	under $\norm{\cdot}_2$ \cite{rai2007spectrum}; \cite{hamidouche2020spectral} extended
	this to any $\normp{\cdot}$, $1\le p\le\infty$, and further showed its convergence to a Dirac mass at $1$.
	Another different but closely related line of work studies affinity matrices \eqref{eq_originalK} constructed as $\Kb(i,j)=g(\normp{\xb_i-\xb_j}/r_n)$, without the hard-radius indicator. 
	With constant $r_n$ and compactly supported sampling densities bounded
	above and below, \cite{von2008consistency} established spectral convergence of the edge eigenvalues/eigenvectors of $\Kb$ and $\Lb_{rw}$ to those of associated integral
	operators. When $r_n=\oo(1)$, the samples are uniform on $[0,1]^d$, and
	$g(t)=\exp(-t^2/4)$, \cite{belkin2006convergence} proved convergence of the (scaled)
	edge eigenvalues of $\Lb_{rw}$ to those of a differential operator, and
	\cite{cheng2022eigen} later obtained their convergence rates. However, all the above methods rely on compact domains and near-uniform distribution properties, together with specific metrics and kernels in the construction of (\ref{eq_originalK}), and thus are not applicable to the Gaussian case (with unbounded support) under our general framework.

	Although it is beyond the scope of our paper, we note that RGGs have also been studied to some extent in the regime where $d$ diverges. For example, for RGGs on the sphere
	with uniform sampling on $\mathbb{S}^{d-1}$ and the $\norm{\cdot}_2$ metric,
	\cite{brennan2020phase,journals/rsa/BubeckDER16,devroye2011high,thesisone,liu2022testing,lu2022equivalence}
	investigated distinguishability from ERGs, and
	\cite{cao2025spectra,dubova2023universality,lu2022equivalence} analyzed the ESDs of their adjacency matrices.
	Another prominent research direction involves analyzing $\Kb$ and $\Lb_{rw}$ when the graph is complete, i.e., $\Kb(i,j)=g(\normp{\xb_i-\xb_j}/r_n)$, see, for example, \cite{MR2462254,Braun2006AccurateEB,cheng2013, DW1,ding2022impact,do2013spectrum,Fan2018,MR3338323,el2010spectrum,koltchinskii2000random}. Nevertheless, the behavior of the edge eigenvalues in those settings also remains largely unknown.
	
	While these results offer valuable insights into certain matrices related to the RGGs, many important questions remain open, particularly with regard to practical applications. For instance, the results regarding the affinity matrix with general kernel functions $g$ and metrics $\normp{\cdot}$ for $1\le p\le\infty$ in (\ref{eq_originalK}), as well as for ${\xb_i}$ with general distributions, especially for those with unbounded support, for example, Gaussian distributions, remain open. Additionally, understanding the convergence of the edge eigenvalues of the aforementioned matrices is crucial. Addressing these questions not only resolves several mathematically intriguing problems but also enables the development of an efficient algorithm for selecting $r_n$ in algorithms like Laplacian Eigenmaps. Furthermore, it allows us to propose novel statistical methods to detect the presence of geometry and signals in graphs. 
	
	\subsection{Our contributions and novelties}\label{intro_new}
	In this paper, we study the edge eigenvalues of random geometric graph generated by $d$-dimensional $\operatorname{i.i.d.}$ multivariate Gaussian samples in the sparse regime. Unlike the bounded support and regular density assumptions commonly imposed in the literature—which typically ensure that the resulting graphs have only one connected component—the unbounded support and vanishing density of Gaussian samples will lead to multiple connected components. Consequently, the matrix (\ref{eq:1.1}) can exhibit several trivially small eigenvalues. In this context, by edge eigenvalues we refer to the non-trivial ones. In Theorem \ref{theo:normald=1}, we show that, after proper normalization, the first few non-trivial edge eigenvalues of (\ref{eq:1.1}) converge with high probability to deterministic limits, which are characterized by the eigenvalues of the continuum limit operator defined in \eqref{eq:2.1.1}. Furthermore, in Proposition \ref{prop:d>1}, we show that these limits can be computed explicitly by solving a second-order linear partial differential equation (PDE) (cf. (\ref{eq:4.2})).

	Different techniques have been employed to study problems with related setups in the literature. However, none of these methods can be directly applied or readily generalized to address our problem. In the dense regime, \cite{adhikari2022spectrum} use integral operators to study the limiting behavior of the edge eigenvalues. However, in our sparse regime, since $r_n$ tends to zero, we cannot construct such valid integral operators using their approaches. On the other hand, in the sparse regime with compact support and bounded densities, Courant–Fischer min–max principle is utilized to link the eigenvalues of RGG to those of some differential operators (e.g., \cite{calder2022improved,cheng2022eigen,garcia2020error,trillos2021geometric}). This argument relies on interpolations that crucially require  bounded eigenfunctions or bounded support, neither of which holds in our Gaussian setup. Consequently, it is difficult to construct an appropriate interpolation to carry out the min-max framework in our setup, if not impossible.
	
	Unlike the dense regime studied through the lens of integral operators in \cite{adhikari2022spectrum}, we investigate the eigenvalues of the RGG via a differential operator that emerges as the continuum limit of the graph. Rather than employing the min–max principle and interpolation as in \cite{calder2022improved,cheng2022eigen,garcia2020error,trillos2021geometric}, to link the eigenvalues of the random geometric graph to the limit operator, we propose a two-step smoothing-matching approach. Specifically, we construct a smoothed empirical operator based on the random geometric graph and then match the first few non-trivial edge eigenvalues of this empirical operator to those of the limit operator. A crucial aspect of our approach is to show the closeness between the empirical operator and the limit operator on some carefully chosen functions. This is achieved by detailed analysis of an orthonormal decomposition the limit operator based on the physicist's Hermite polynomials \cite{szeg1939orthogonal} and a truncation argument.
	
	While this Hermite-based decomposition is specific to Gaussian distribution, our approach is adaptable to other distributions: operators associated with other sampling distributions that admit
	orthonormal bases can, in principle, be treated by replacing Hermite polynomials with the corresponding basis to carry out our arguments. On the other hand, we point out that in bounded-support domains, the truncation argument becomes unnecessary and our approach applies with minor modification. As future work, we plan to explore the statistical applications discussed at the end of Section \ref{sec:mainresult} and extend our approach to high-dimensional regimes.
	
	The paper is organized as follows. Section \ref{sec:2} presents our main results and outlines the proof strategies. In Section \ref{sec_3}, we analyze and compute the eigenvalues and eigenfunctions of the limiting differential operators. Section \ref{sec:mainproof} contains the proofs of the main results. Additional details are provided in the appendices.
	
	\vspace{6pt}
	\noindent{\bf Conventions.} We write $C$, $C_0$, etc.\ for generic large positive constants whose values may change from line to line, and $\epsilon$, $\delta$, etc.\ for generic small positive constants. We use $\floor{a}$ for the greatest integer less than or equal to $a$. When a constant depends on a parameter $a$, we indicate this by $C(a)$ or $C_a$. For sequences $\{a_n\}$ and $\{b_n\}$ depending on $n$, $a_n=\OO(b_n)$ means $|a_n|\le C|b_n|$ for some constant $C>0$, while $a_n=\oo(b_n)$ (equivalently $b_n\gg a_n$) means $|a_n|\le c_n|b_n|$ for some positive sequence $c_n\downarrow0$ as $n\to\infty$. We also use $a_n\lesssim b_n$ to mean $a_n=\OO(b_n)$, $a_n\gtrsim b_n$ to mean $b_n=\OO(a_n)$, and $a_n\asymp b_n$ to mean both $a_n=\OO(b_n)$ and $b_n=\OO(a_n)$. The probabilistic analogues are $a_n=\OOp(b_n)$, meaning $a_n=\OO(b_n)$ with probability at least $1-\oo(1)$, and $a_n=\oop(b_n)$, meaning $a_n=\oo(b_n)$ with probability at least $1-\oo(1)$. We use bold lowercase such as $\xb,\yb$ for vectors and bold uppercase such as $\bf{D},\bf{K}$ for matrices. Calligraphic letters such as $\mathcal{L},\cF$ denote operators or spaces, and plain letters such as $f,g$ denote generic functions. For norms, $\normp{\xb}=\big(\sum_{j=1}^d |x_j|^p\big)^{1/p}$ denotes the $\ell_p$-norm of $\xb=(x_1,\cdots,x_d)^{\top}\in\R^d$, for $1\le p\le\infty$. For a function $f$ on a space $\cF$ with a weight $w(\xb)$, we write $\norm{f}_{\cF}=\big(\int f^2(\xb)\,w(\xb)\dd\xb\big)^{1/2}$. We denote by $\norm{\mathcal{L}}$ and $\norm{\bf{A}}$ the operator norms of an operator $\mathcal{L}$ and a matrix $\bf{A}$, respectively.

	\section{Main results and proof strategy}\label{sec:2}
	\subsection{Main results}\label{sec:mainresult}
	Let $\mathcal{X}=\{\xb_1,\cdots,\xb_n\}\subset\R^d$ consist of i.i.d. samples from the Gaussian distribution $\mathcal{N}_d(\bm{0},\Sigma)$, where
	\begin{align}
		\Sigma=\operatorname{diag}(\sigma_1^2,\cdots,\sigma_d^2),\quad 0<\sigma_i<\infty,\quad 1\le i\le d.\label{eq:2.1.0}
	\end{align}
	Recall \eqref{eq_originalK}. For notational simplicity, we denote $h(x)=\bm{1}_{[0,1]}(x)g(x)$. Moreover, for $l\ge0$, we define
	\begin{align}
		m_l\coloneq\int_{\R^d} |u_i|^l h(\normp{\bm{u}})\dd\bm{u},\quad \bm{u}=(u_1,\cdots,u_d)^\top,\label{eq:mj}
	\end{align}
	for any $i=1,\cdots,d$. Note that by symmetry, $m_l$ is independent of the choice of $i$. Together with \eqref{eq:1.1}, we define the scaled random-walk Laplacian matrix
	\begin{align}
		\Lb\coloneq\frac{2m_0}{m_2\,r_n^2}\Lb_{rw}.\label{eq:2.1.2}
	\end{align}
	Throughout the paper, we list the eigenvalues of $\Lb$ in non‑decreasing order,
	$$\lambda_1(\Lb)\le\lambda_2(\Lb)\le\cdots\le\lambda_n(\Lb).$$
	We point it out that according to classic spectral graph theory (see, e.g., \cite[Chapter 1]{chung1997spectral}), $\lambda_1(\Lb)=0$.
	
	For $\xb=(x_1,\cdots,x_d)^\top\in\R^d$ and $\{\sigma_i^2\}_{i=1}^d$ in \eqref{eq:2.1.0}, let
	\begin{align}
		\varrho(\xb)=\exp\left(-\sum_{i=1}^d\frac{1}{2\sigma_i^2}x_i^2\right).\label{eq:2.1.4}
	\end{align}
	Based on $\varrho(\xb)$, we define a Hilbert space
	\begin{align}
		\cF=\left\{f:\R^d\rightarrow\R\biggm|\int_{\R^d}f^2(\xb)\varrho^2(\xb)\dd\xb<\infty\right\},\label{eq:2.1.3}
	\end{align}
	equipped with the inner product
	\begin{align}
		\Rprod{f_1}{f_2}=\int_{\R^d}f_1(\xb)f_2(\xb)\varrho^2(\xb)\dd \xb,\quad f_1,f_2\in\cF,\notag
	\end{align}
	and the induced norm
	\begin{align}
		\normf{f}\coloneq\sqrt{\Rprod{f}{f}},\quad f\in\cF.\label{eq:fnorm}
	\end{align}
	With the above notations in place, we define the weighted Laplace–Beltrami operator on $(\cF,\Rprod{\cdot}{\cdot})$ by
	\begin{align}
		\Dr f=-\frac{1}{\varrho^2}\,\div\bigl(\varrho^2\nabla f\bigr).\label{eq:2.1.1}
	\end{align}
	As we will show in Theorem \ref{theo:eigen}, the spectrum of $\Dr$ is discrete and bounded from below on $(\cF,\Rprod{\cdot}{\cdot})$. We list its eigenvalues as $\mu_1(\Dr)\le\mu_2(\Dr)\le\cdots$.
	
	\begin{remark}
		The density function of $\mathcal{N}_d(\bm{0},\Sigma)$ with $\Sigma$ in \eqref{eq:2.1.0} is
		$$\varrho^*(\xb)=(2\pi)^{-d/2}\prod_{i=1}^d\sigma_i^{-1}\exp\left(-\sum_{i=1}^d\frac{1}{2\sigma_i^2}x_i^2\right)=\left[(2\pi)^{-d/2}\prod_{i=1}^d\sigma_i^{-1}\right]\varrho(\xb),$$
		which differs from $\varrho(\xb)$ in \eqref{eq:2.1.4} up to some constant.
		One can also construct another Hilbert space $(\cF^*,\Rprod{\cdot}{\cdot}_*)$ based on $\varrho^*(\xb)$,
		where 
		$$\cF^*\coloneq\left\{f^*:\R^d\rightarrow\R\biggm|\int_{\R^d}(f^*(\xb))^2(\varrho^*(\xb))^2\dd\xb<\infty\right\},$$
		and 
		$$\Rprod{f_1^*}{f_2^*}_*\coloneq\int_{\R^d}f_1^*(\xb)f_2^*(\xb)(\varrho^*(\xb))^2\dd \xb,\quad f_1^*,f_2^*\in\cF^*.$$
		Note that $(\cF^*,\Rprod{\cdot}{\cdot}_*)$ is essentially the same as $(\cF,\Rprod{\cdot}{\cdot})$ up to some constant in the inner product. Since 
		\begin{align*}
			\Delta_{\varrho^*} f\coloneq-\frac{1}{(\varrho^*)^2}\,\div\bigl((\varrho^*)^2\nabla f\bigr)=-\frac{1}{\varrho^2}\,\div\bigl(\varrho^2\nabla f\bigr)=\Dr f,
		\end{align*}
		we have that $\varrho(\xb)$ and $\varrho^*(\xb)$ induce the same weighted Laplace-Beltrami operator. Combining the above discussion, without loss of generality, we can utilize $\varrho(\xb)$ in \eqref{eq:2.1.4} to construct the Hilbert space and the weighted Laplace-Beltrami operator related to $\mathcal{N}_d(\bm{0},\Sigma)$.
	\end{remark}
	We are now ready to state the main results. For some small constant $\delta>0$, set
	\begin{align}
		K_0\equiv K_0(\delta,\Lb)\coloneq\max_j\{j:\lambda_j(\Lb)\le\delta\}.\label{eq:2.1}
	\end{align}
	\begin{theorem}
		\label{theo:normald=1}
		Fix some small constant $0<\epsilon<\frac{1}{2(d+4)}$ and assume 
		\begin{align}
			n^{-\tfrac{1}{d+4}+\epsilon}\ll r_n\ll n^{-\epsilon}.\label{eq:2.2}
		\end{align}
		For some fixed integer $d\in\N^+$, suppose $\xb_j\stackrel{\mathrm{i.i.d.}}{\sim}\mathcal{N}_d(\bm{0},\Sigma)$ for $j=1,\cdots,n$, with $\Sigma$ as in \eqref{eq:2.1.0}. Moreover, for $\delta$ in \eqref{eq:2.1}, we assume that
		\begin{align}
			\delta<\min\{\sigma_1^{-2},\cdots,\sigma_d^{-2}\}.\label{eq:delta}
		\end{align}
		In addition, for $h(y)=\bm{1}_{[0,1]}(y)g(y)$, $y\ge0$, we assume that $g\in C^2([0,\infty))$ and $g(\cdot)$ is bounded from above and below on $[0,1]$ by some universal positive constants. Let $\Lb$ be defined in \eqref{eq:2.1.2} via \eqref{eq:1.1} and \eqref{eq_originalK} with $1\le p\le\infty$. Let $M$ be any fixed positive integer and $K_0$ be defined in \eqref{eq:2.1}. Then, for sufficiently large $n$, with probability $1-\oo(1)$, $K_0\le n^{1-\tfrac{2}{(d+2)^2+\eta}}$, for some sufficiently small constant $\eta>0$. Moreover,
		\begin{align}
			\lambda_{K_0+k}(\Lb)=\mu_{k+1}(\Dr)+\oo(1),\quad k=1,\cdots,M.\label{eq:main}
		\end{align}
	\end{theorem}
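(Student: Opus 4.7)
The plan is to implement the two-step smoothing--matching argument sketched in Section~\ref{intro_new}. My starting point is the observation that the statement makes two claims: a bound on $K_0$, and the convergence \eqref{eq:main} of the first few nontrivial eigenvalues above $K_0$. These two parts interact: the eigenvalues counted by $K_0$ are artefacts of isolated/small components that arise because $\varrho$ vanishes in the tails of $\R^d$, so I must first argue that these spurious eigenvalues stay below $\delta$ in number, then argue that just above the cutoff $\delta$ the spectrum tracks $\Delta_\varrho$. Concretely, I would pick a truncation box $B_R=\{\xb:\max_i |x_i|/\sigma_i\le R\}$ of radius $R=R(n)$ that diverges slowly (e.g.\ $R=(\log n)^{1/2+o(1)}$) and cover $B_R$ with a fine grid of boxes of side comparable to $r_n$. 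On the grid I can use concentration of the empirical measure (standard multinomial / Bernstein bounds) to show that each such box contains roughly $n\varrho^2(\xb)r_n^d$ points up to multiplicative error, which, together with the assumption $r_n\gg n^{-1/(d+4)+\epsilon}$, ensures that inside $B_R$ the graph is well-connected and that each row sum of $\Kb$ is close to its Gaussian expectation $n m_0 r_n^d \varrho^2(\xb_i)(1+o(1))$. Outside $B_R$, direct counting of points gives the bound on $K_0$ by showing that the number of points lying outside $B_R$ or in anomalously sparse cells is at most $n^{1-2/((d+2)^2+\eta)}$, which controls the dimension of the subspace on which $\Lb$ can be small ``for trivial reasons''.

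For the core convergence \eqref{eq:main}, I would build a smoothed empirical operator $\tilde{\cL}_n$ on $\cF$ that mirrors the action of $\Lb$ on a natural embedding $\Pi:\cF\to\R^n$, $(\Pi f)_i=f(\xb_i)$. For a test function $f$, the Riemann-sum heuristic
\[
(\Lb\Pi f)_i\;\approx\;\frac{2m_0}{m_2 r_n^2}\Big(f(\xb_i)-\frac{\sum_j\Kb(i,j)f(\xb_j)}{\sum_j\Kb(i,j)}\Big)\;\approx\;(\Dr f)(\xb_i)
\]
should hold pointwise with quantitative error of order $r_n+\sqrt{\log n/(n r_n^{d+2}\varrho^2(\xb_i))}$ by a two-term Taylor expansion of $f$, using that $m_1=0$ by symmetry and $m_2$ appears in the quadratic term. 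This is where the kernel assumptions on $g$ and the $\ell_p$ symmetry of the metric enter: they control the moments $m_l$ cleanly. Using the Theorem~\ref{theo:eigen} decomposition of $\Dr$ via Hermite polynomials, the first $M+1$ eigenfunctions $\psi_1,\dots,\psi_{M+1}$ are explicit polynomials times Gaussian factors, and in particular their derivatives up to order two have polynomial growth—so multiplied by a smooth cutoff on $B_R$ they produce bounded, well-controlled test functions for both the variance estimates and the Taylor remainder. The truncation is precisely what handles the unbounded-support obstacle that defeats the interpolation approach of \cite{garcia2020error,trillos2021geometric}: I only need the eigenfunctions of $\Dr$ to be well approximated on the region where the empirical data is dense enough to give a reliable discretization.

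The matching itself is then a two-sided counting/min-max argument adapted to the weighted inner product. For the upper bound $\lambda_{K_0+k}(\Lb)\le \mu_{k+1}(\Dr)+o(1)$, I would take the span of the truncated images $\Pi\psi_1,\dots,\Pi\psi_{k+1}$, verify it is $(k+1)$-dimensional with high probability (this uses a Gram-matrix concentration argument, since $n^{-1}\sum_i \psi_a(\xb_i)\psi_b(\xb_i)\to\Rprod{\psi_a}{\psi_b}/\int\varrho^2$ by LLN on test functions), then use the smoothed operator identity above to bound the Rayleigh quotient on this span by $\mu_{k+1}(\Dr)+o(1)$; by Courant--Fischer in $\R^n$ this gives an eigenvalue of $\Lb$ at most $\mu_{k+1}(\Dr)+o(1)$ in the first $k+1$ eigenvalues that lie \emph{above} the ``trivial'' spectrum, and the $K_0$ bound ensures those sit at index $\le K_0+k+1$. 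For the lower bound I reverse the roles: take an approximate eigenvector $\bmu\in\R^n$ of $\Lb$ for $\lambda_{K_0+k}(\Lb)$, smooth it against a kernel of width $r_n$ to obtain $\tilde f\in\cF$, use the Riemann-sum estimate in the opposite direction to show $\Rprod{\tilde f}{\Dr \tilde f}\le \lambda_{K_0+k}(\Lb)\normf{\tilde f}^2(1+o(1))$, and observe that the orthogonality of $\bmu$ to the ``low-frequency'' eigenvectors of $\Lb$ below $\delta$ translates, after smoothing, into approximate orthogonality of $\tilde f$ to the first $k$ Hermite eigenmodes of $\Dr$ (since those eigenmodes are what the low nontrivial eigenvectors approximate by the upper-bound step).

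The main obstacle I expect is the lower bound, specifically the step transferring orthogonality from $\R^n$ (against the bottom $K_0+k$ eigenvectors of $\Lb$) to $\cF$ (against the first $k$ Hermite eigenmodes of $\Dr$). On $\R^n$ the ``trivial'' low eigenvectors of $\Lb$ can be supported almost entirely outside $B_R$ (on the sparse tail components), while the Hermite eigenmodes live on the bulk; the two sets of vectors are essentially decoupled, but this decoupling has to be quantified. The argument would go via the truncation: I decompose any candidate eigenvector as an ``in-$B_R$'' and ``out-$B_R$'' part, bound the out-part's contribution to the Rayleigh quotient trivially (it is at least $0$), use the Poincar\'e/Cheeger-type control on $\Lb$ restricted to $B_R$ coming from the cell concentration, and then identify the in-part with a smoothed $\tilde f$. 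Making the error in this decomposition $o(1)$ rather than $O(1)$ requires the $R$-dependent truncation to be chosen compatibly with $r_n$, which forces the precise range $r_n\gg n^{-1/(d+4)+\epsilon}$ in~\eqref{eq:2.2} and the upper restriction $\delta<\min_i\sigma_i^{-2}$, since $\sigma_i^{-2}$ are exactly the eigenvalues of $\Dr$ generated by linear Hermite modes and must not be absorbed into the trivial band.
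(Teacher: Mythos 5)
Your proposal matches the paper's overall architecture for the upper-bound direction and for the $K_0$ control (bulk/tail truncation, cell-level concentration, smoothed empirical operator with Hermite test functions), but it diverges in the crucial lower-bound step, and that divergence is exactly the gap that the paper's whole construction is designed to avoid.

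For the lower bound you propose to take an approximate eigenvector $\bmu\in\R^n$ of $\Lb$, smooth it to obtain $\tilde f\in\cF$, and then transfer the discrete orthogonality (against the bottom $K_0+k$ eigenvectors of $\Lb$) into approximate orthogonality of $\tilde f$ against the first $k$ Hermite eigenmodes of $\Dr$. This is precisely the interpolation-plus-Courant--Fischer strategy of \cite{calder2022improved,cheng2022eigen,garcia2020error}, which the paper argues (Section~\ref{sec:stg}) cannot be carried out here: the Hermite eigenfunctions have unbounded $\ell_\infty$ norm and the transport-type interpolation error diverges when $\supp(\varrho)=\R^d$. You already identify this step as the ``main obstacle'' and offer only a heuristic decoupling argument; nothing in your sketch quantifies the leakage between the tail-supported spurious eigenvectors and the bulk Hermite modes, and the $o(1)$ error you need is exactly what breaks down when $\varrho$ vanishes. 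So as written the proof does not close.

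The paper's actual mechanism is qualitatively different and worth internalizing. It never maps discrete eigenvectors back to $\cF$. Instead it (a) replaces the hard indicator by a sigmoid to get $\Lc$ and shows $\norm{\Lb-\Lc}=\oop(1)$ (Lemma~\ref{lemma:S2L}); (b) builds the empirical operator $\sLn$ on $\cF$ in \eqref{eq:2.2.3} which, by the von Luxburg-style fixed-point identity in Lemma~\ref{lemma:S2Sn}, has \emph{exactly} the same eigenvalues (with multiplicities) as $\Lc$ away from $\frac{2m_0}{m_2 r_n^2}$; and (c) compares $\sLn$ and $\Dr$ directly on the Hilbert space $\cF$ via the resolvent/contour-integral counting formula (Theorem~\ref{theo:counting}), driven only by the one-sided bound $\normf{(\sLn-\Dr)\varphi_{j,l}}=\oo(1)$ on the Hermite test functions (Proposition~\ref{prop:pointwise}). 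Because both the matching and the lower bound happen at the level of two operators living on the same space $\cF$, the ``orthogonality transfer'' you worry about never arises. This is the missing idea in your proposal: you need a genuine spectral identity linking the matrix Laplacian to an operator on $\cF$, not just a Riemann-sum approximation on a test span, and then a two-sided eigenvalue count (not a Rayleigh-quotient bound) to handle degenerate eigenvalues of $\Dr$.

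Two smaller remarks. For $K_0$, counting tail points and sparse cells does not by itself bound how many eigenvalues of $\Lb_{rw}$ fall below $\delta$; the paper turns the degree control $\frac{1}{n}\sum_j \mathsf d_j^{-1}$ into an eigenvalue bound via the Schatten-2 (Hilbert--Schmidt) inequality \eqref{eq:HSnorm} applied to $\Db^{-1}\Kb$, and you should spell out an analogous linear-algebra step rather than asserting that the tail points ``control the dimension of the subspace.'' Finally, your smoothed operator $\tilde{\cL}_n$ is defined only implicitly through $\Pi$; before using it you must check well-definedness for $\xb$ far from all data points, which is exactly what the sigmoid modification in \eqref{eq:2.2.1}--\eqref{eq:2.2.3} is there to fix.
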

	
	\begin{remark}
		Theorem \ref{theo:normald=1} shows that the first few non-trivial eigenvalues of the scaled random‑walk Laplacian matrix $\Lb$ converge, with probability $1-\oo(1)$, to those of the weighted Laplace–Beltrami operator $\Dr$ on $(\cF,\Rprod{\cdot}{\cdot})$. We demonstrate the empirical accuracy of the result in Figure \ref{fig:sub11}.  Moreover, to make \eqref{eq:main} valid, we only need that $K_0=\oo(n)$. In this sense, $K_0\le n^{1-\tfrac{2}{(d+2)^2+\eta}}$ is sufficient for our purpose. Nevertheless, we believe that such a bound can be improved, especially if \eqref{eq:2.2} can be relaxed. Since this is out of the scope of our current work, we will pursue this direction in the future.

		We will show in Section \ref{sec:prop} that $m_0$ and $m_2$ defined in \eqref{eq:mj} are bounded from above and below by some positive constants. Consequently, Theorem \ref{theo:normald=1} implies that the first few eigenvalues of the unscaled random-walk Laplacian matrix $\Lb_{rw}$ in \eqref{eq:1.1} are $\oo(1)$, which differs significantly from the results for dense RGGs \cite{adhikari2022spectrum}. We now provide some discussion on our assumptions.
		
		First, $g(\cdot)$ can be relaxed into more general functions. In fact, as $\normp{\xb_i-\xb_j}/r_n\ge 0$ and whenever $\normp{\xb_i-\xb_j}/r_n>1$, $h(\normp{\xb_i-\xb_j}/r_n)=0$, only the behavior of $g(\cdot)$ on $[0,1]$ is relevant in our analysis. Therefore, we can allow $g(\cdot)$ to be defined on $\R$. We can also weaken the differentiability assumption of $g(\cdot)$ outside $[0,1]$.
		
		Second, the condition \eqref{eq:2.2} balances a tradeoff for the radius $r_n$. On the one hand, if $r_n$ is too large, each $\xb_j$ connects too many neighbors, which obscures the local geometry around $\xb_j$. On the other hand, if $r_n$ is too small, it is difficult to connect sufficient neighbors to capture enough information of the data. This tradeoff in the choice of $r_n$ can be empirically visualized in Figure \ref{fig:sub12}. Moreover, due to the unbounded support of Gaussian distribution, we require a larger lower bound of $r_n$ than those of the existing literature like \cite{calder2022improved,cheng2022eigen,garcia2020error} assuming bounded support. For example, when $\xb_j$'s are i.i.d. sampled from a distribution with bounded support, \cite{cheng2022eigen} only requires $r_n\gg\left(\log n/n\right)^{1/d}$. Technically, the larger lower bound on $r_n$ in our setup required by \eqref{eq:2.2} arises from the truncation argument we employ in Section \ref{sec:prop}. We believe that, even if it is unlikely to recover the above lower bound for $r_n$ when $\xb_j$'s have unbounded support, \eqref{eq:2.2} can be weakened to some extent with substantial technical effort. We will explore this aspect in future work.
		
		Third, in this paper, we focus on Gaussian samples with diagonal population covariance matrices. On the one hand, for the ease of statement, we assume that $\Sigma$ is diagonal so that the induced weighted Laplace-Beltrami operator $\Dr$ can be studied via $d$ operators acting on Hilbert spaces of one-dimensional functions; see Appendix \ref{appendix:pde} for more detailed discussion. Dealing with more general covariance matrices would require a more involved analysis of the weighted Laplace-Beltrami operator. This is left for future work. On the other hand, we focus on the Gaussian distribution due to the connection between its density function and the Hermite polynomials, which are actually the eigenfunctions of $\Dr$. We believe our results could be generalized to sub-Gaussian samples with additional technical work. We will pursue this direction in the future.
	\end{remark}
	
	In the following proposition, we further study the spectrum of $\Dr$ so that the limits of the non-trivial edge eigenvalues of $\Lb$ become explicit. For all $k_1,\cdots,k_d\in\N^+$, we denote a non-negative sequence as follows
	$$\left\{\sum_{i=1}^d 2(k_i-1)/\sigma_i^2\right\},$$
	where some of the values may repeat and multiplicities may appear. For simplicity, we list them in non-decreasing order and denote them as
	\begin{align}
		\mathsf{a}_1\le\mathsf{a}_2\le\cdots.\label{eq:2.1.5}
	\end{align}
	
	\begin{proposition}
		\label{prop:d>1}
		Under the assumptions of Theorem \ref{theo:normald=1}, we have that $\mu_j(\Dr)=\mathsf{a}_j$, for all $j\in\N^+$. Consequently, for sufficiently large $n$ and any fixed positive integer $M$, with probability $1-\oo(1)$,
		$$\lambda_{K_0+k}(\Lb)=\mathsf{a}_{k+1}+\oo(1),\quad k=1,\cdots,M.$$
	\end{proposition}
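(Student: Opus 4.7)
The plan is to diagonalize $\Dr$ explicitly by exploiting the product structure of the Gaussian weight $\varrho^2(\xb)=\exp\bigl(-\sum_{i=1}^d x_i^2/\sigma_i^2\bigr)$. Expanding the divergence in \eqref{eq:2.1.1} gives
\begin{align*}
\Dr f \;=\; -\Delta f + 2\sum_{i=1}^d \frac{x_i}{\sigma_i^2}\frac{\partial f}{\partial x_i} \;=\; \sum_{i=1}^d L_i f, \qquad L_i \coloneq -\partial_{x_i}^2 + \frac{2x_i}{\sigma_i^2}\partial_{x_i},
\end{align*}
so $\Dr$ decouples into a sum of one-dimensional Ornstein--Uhlenbeck-type operators, each symmetric on $(\cF,\Rprod{\cdot}{\cdot})$ and acting only on the $i$-th coordinate.

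First, I would diagonalize a single $L=L_i$ on one-dimensional functions weighted by $\exp(-x^2/\sigma^2)$. Under the change of variables $y=x/\sigma$, the operator $L$ transforms into $\sigma^{-2}(-\partial_y^2+2y\partial_y)$, acting on $L^2(\R,e^{-y^2}\dd y)$. The physicist's Hermite polynomials $\{H_k\}_{k\ge 0}$ form a complete orthogonal basis of that space \cite{szeg1939orthogonal} and satisfy the classical ODE $-H_k''(y)+2yH_k'(y)=2kH_k(y)$. Consequently, $L$ has eigenvalues $\{2k/\sigma^2\}_{k\ge 0}$ with corresponding eigenfunctions $H_k(\,\cdot\,/\sigma)$.

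Next, I would pass to tensor products. For any multi-index $(k_1,\dots,k_d)\in(\N^+)^d$, the function $\psi_{k_1,\dots,k_d}(\xb)\coloneq\prod_{i=1}^d H_{k_i-1}(x_i/\sigma_i)$ is an eigenfunction of $\Dr=\sum_i L_i$ with eigenvalue $\sum_{i=1}^d 2(k_i-1)/\sigma_i^2$. Because $\varrho^2$ factors as a product, $\cF$ is isometric to the $d$-fold Hilbert tensor product of the one-dimensional weighted spaces, so completeness of the Hermite basis in each factor implies that $\{\psi_{k_1,\dots,k_d}\}$ is a complete orthogonal system in $\cF$. Hence these are all of the eigenvalues of $\Dr$, and arranging them in non-decreasing order yields precisely the sequence $\{\mathsf{a}_j\}$ of \eqref{eq:2.1.5}. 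This proves $\mu_j(\Dr)=\mathsf{a}_j$ for every $j\in\N^+$, and the claimed convergence $\lambda_{K_0+k}(\Lb)=\mathsf{a}_{k+1}+\oo(1)$ follows immediately by substituting this identification into \eqref{eq:main} of Theorem \ref{theo:normald=1}.

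The only step requiring genuine care is verifying completeness of the tensor-product Hermite system in $\cF$; this reduces to the classical density of Hermite polynomials in the one-dimensional Gaussian-weighted $L^2$ space combined with a standard tensor-product argument, so no fundamental obstacle is anticipated.
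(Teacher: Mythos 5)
Your proof is correct and uses the same separation-of-variables decomposition as the paper: write $\Dr=\sum_i L_i$ with each $L_i$ a one-dimensional Hermite-type operator, diagonalize each factor by Hermite polynomials, and pass to tensor products. The paper's proof of Proposition~\ref{prop:d>1} simply cites Theorem~\ref{theo:eigen}, so the real comparison is with that theorem's proof, and there you take a meaningfully shorter route. The paper re-derives the eigenfunctions from scratch: it solves the one-dimensional ODE \eqref{eq:4.3} by a power-series ansatz, rewrites the two fundamental solutions $\mathsf{f}_{i,1},\mathsf{f}_{i,2}$ as confluent hypergeometric functions, analyzes their $|x_i|\to\infty$ asymptotics, and shows directly that only the polynomial (i.e.\ Hermite) solutions lie in $\cF_i$, thereby identifying the admissible $\lambda_i$ without appeal to the spectral theorem. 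You instead cite the classical Hermite ODE $-H_k''+2yH_k'=2kH_k$ together with the classical completeness of $\{H_k\}$ in $L^2(\R,e^{-y^2}\dd y)$, and conclude that the spectrum is exactly $\{2k/\sigma^2\}$. That is a cleaner argument and perfectly valid for this operator, but it implicitly relies on $L_i$ being (essentially) self-adjoint on a suitable domain so that a complete orthogonal eigenbasis exhausts the spectrum; the paper's first-principles ODE analysis sidesteps that point by exhibiting, for every $\lambda_i$ outside the Hermite sequence, a fundamental system whose members grow too fast to belong to $\cF_i$. Both routes establish the same conclusion, and your tensor-product completeness remark is the same as the paper's argument via $\cF=\cF_1\otimes\cdots\otimes\cF_d$ in Appendix~\ref{appendix:pde}.
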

	
	For illustration of Proposition \ref{prop:d>1}, we introduce several examples on how to calculate \eqref{eq:2.1.5}.
	\begin{example}
		\label{example:eg1}
		When $d=1$ and $\sigma_1^2\equiv\sigma^2$, all the eigenvalues of $\Dr$ are simple and $\mathsf{a}_j=2(j-1)/\sigma^2$ for $j\in\N^+$. Therefore, we have that for sufficiently large $n$ and any fixed integer $M$, with probability $1-\oo(1)$,
		$$\lambda_{K_0+k}(\Lb)=2(k-1)/\sigma^2+\oo(1),\quad k=1,\cdots,M.$$
	\end{example}
	\begin{example}
		\label{example:eg2}
		When $d=2$ and $\sigma_1^2=\sigma_2^2=\sigma^2$, the spectrum of $\Dr$ consists of values $2(s-1)/\sigma^2$ with multiplicities $s$ for $s\in\N^+$, i.e.,
		$$\mathsf{a}_1=0,\quad \mathsf{a}_2=\mathsf{a}_3=2/\sigma^2,\quad \mathsf{a}_4=\mathsf{a}_5=\mathsf{a}_6=4/\sigma^2,\quad\cdots.$$
		Therefore, we have that for sufficiently large $n$ and any fixed integer $M$, with probability $1-\oo(1)$,
		$$\lambda_{K_0+k}(\Lb)=\frac{2}{\sigma^2}\min\left\{l:\sum_{s=1}^ls\ge k+1,l\in\N\right\}-\frac{2}{\sigma^2}+\oo(1),\quad k=1,\cdots,M.$$
	\end{example}
	\begin{example}
		\label{example:eg3}
		When $d=2$, $\sigma_1^2=\sigma^2$ and $\sigma_2^2=2\sigma^2$, the spectrum of $\Dr$ consists of values $s/\sigma^2$ for $s\in\N$ with multiplicities $\floor{s/2}+1$, where $\floor{s/2}$ is the largest integer smaller than or equal to $s/2$. That is to say,
		$$\mathsf{a}_1=0,\quad \mathsf{a}_2=1/\sigma^2,\quad \mathsf{a}_3=\mathsf{a}_4=2/\sigma^2,\quad \mathsf{a}_5=\mathsf{a}_6=3/\sigma^2,\quad\cdots.$$
		Therefore, we have that for sufficiently large $n$ and any fixed integer $M$, with probability $1-\oo(1)$,
		$$\lambda_{K_0+k}(\Lb)=\frac{1}{\sigma^2}\min\left\{l:\sum_{s=0}^l(\floor{s/2}+1)\ge k+1,l\in\N\right\}+\oo(1),\quad k=1,\cdots,M.$$
	\end{example}

	Before concluding this section, we present simulations (Figures~\ref{figure1} and \ref{figure2}) to illustrate the accuracy and potential applications of Theorem \ref{theo:normald=1} and Proposition \ref{prop:d>1}. For convenience, we focus on $d=1$, as in Example \ref{example:eg1}, and construct the RGGs with $g \equiv 1$ in (\ref{eq_originalK}). In Figure \ref{figure1}, we assess the accuracy of \eqref{eq:main} and the effect of $r_n$ on our results. More specifically, in Figure \ref{fig:sub11}, we illustrate the empirical convergence of the first few non-trivial edge eigenvalues of $\Lb$ to their theoretical limits from Example \ref{example:eg1}, which verifies our results in \eqref{eq:main}.
	In Figure \ref{fig:sub12}, we report the empirical mean of the relative error: $\sum_{k=1}^6 |\lambda_{K_0+k}(\Lb)-\mu_{k+1}(\Dr)|/(6 \mu_{k+1}(\Dr)),$ 
	as $r_n$ varies. The curve exhibits a U-shaped with a flat bottom: either too small or too large $r_n$ induce large deviations from the limits, whereas mid-range $r_n$ yields small and stable errors, which is consistent with condition \eqref{eq:2.2}.

	\begin{figure}[!ht] 
		\centering 
		
		\begin{subfigure}[b]{0.48\textwidth}
			\centering
			\includegraphics[width=8cm, height=6cm]{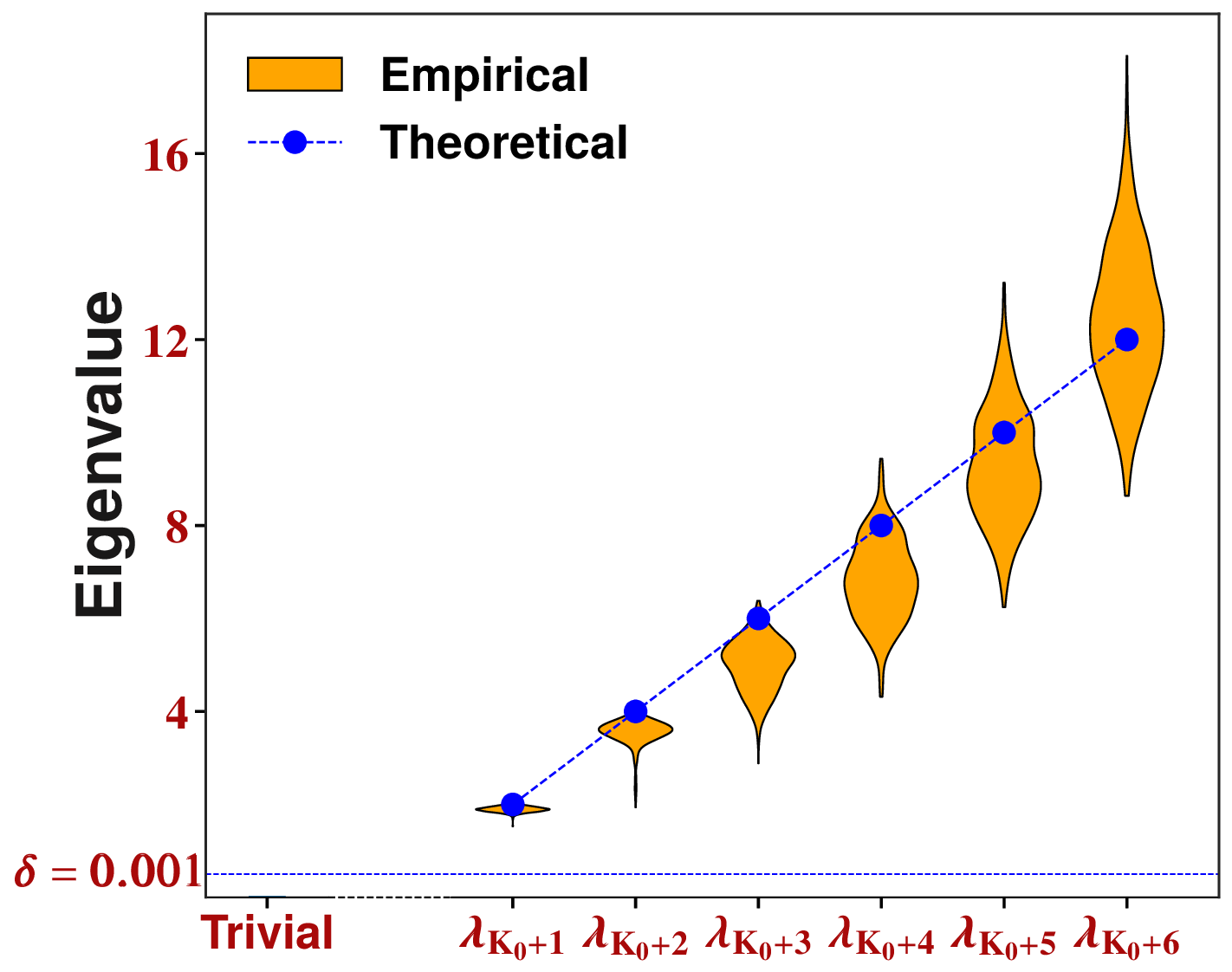}
			\caption{Empirical accuracy}
			\label{fig:sub11}
		\end{subfigure}
		\hfill 
		\begin{subfigure}[b]{0.49\textwidth}
			\centering
			\includegraphics[width=8cm, height=6cm]{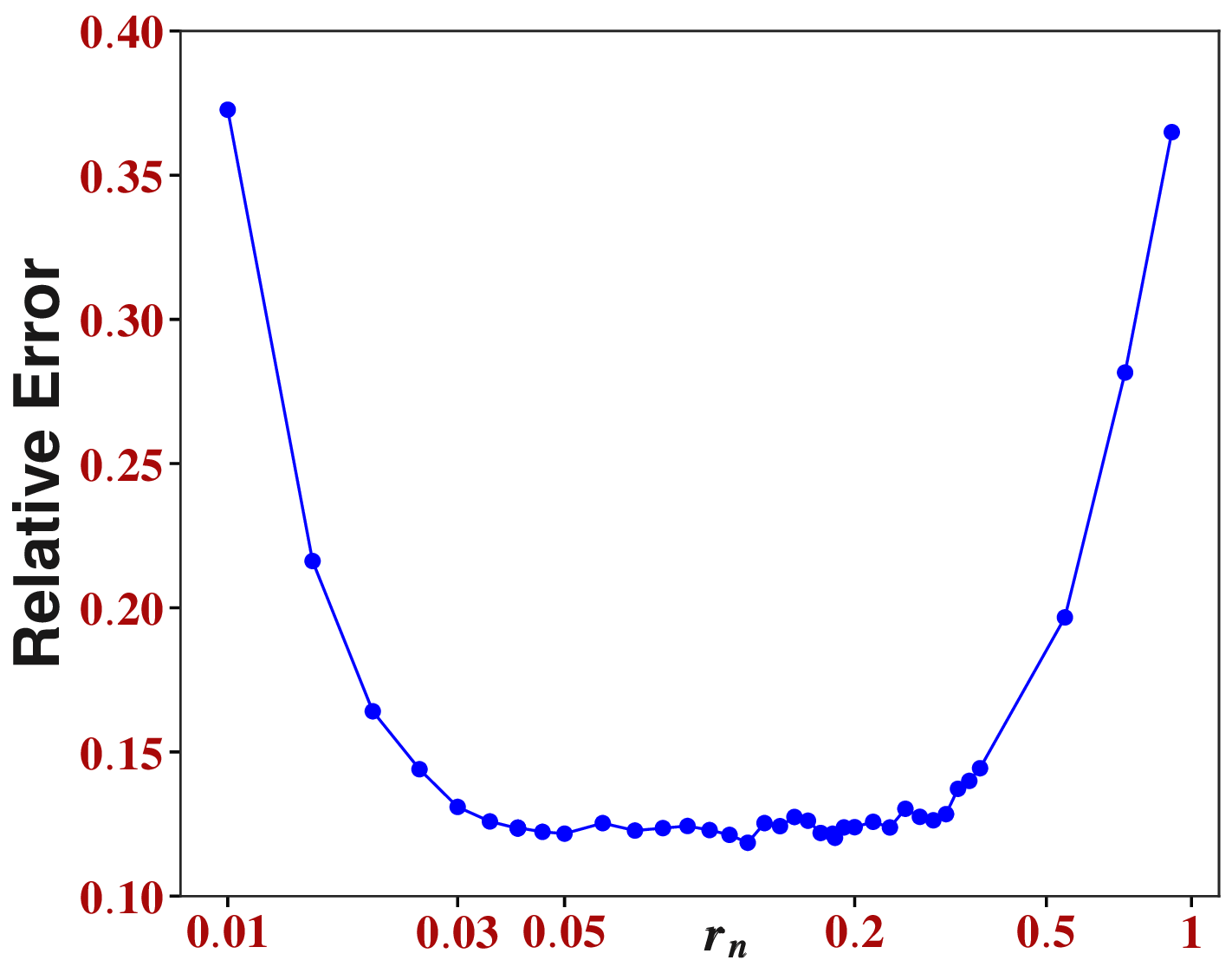}
			\caption{Impact of $r_n$}
			\label{fig:sub12}
		\end{subfigure}
		\caption{Empirical accuracy and the impact of $r_n$. Here we use the setup of Example \ref{example:eg1} with $\sigma=1$, $n=5,000$ and $ g\equiv 1$ in (\ref{eq_originalK}). In  Figure \ref{fig:sub11}, we choose $r_n=0.05$ and illustrate the empirical convergence of the first six non-trivial edge eigenvalues using violin plots; the theoretical limits $\mu_{2}(\Dr),\cdots,\mu_7(\Dr)$ from Example \ref{example:eg1} are marked by the blue points. In  Figure \ref{fig:sub12}, we compare the relative errors $\sum_{k=1}^6 |\lambda_{K_0+k}(\Lb)-\mu_{k+1}(\Dr)|/(6\mu_{k+1}(\Dr))$ under different chioces of $r_n$. All results are based on 1,000 repetitions.}
		\label{figure1}
	\end{figure}
	
	Then we discuss some potential statistical applications of our results. In manifold learning, the clean signals $\{\mathbf{s}_j\}$ are often assumed to be sampled from some compact manifold with regular sampling density. Moreover, the algorithms usually assume the underlying dataset is clean, an assumption which is rarely satisfied in practice. To check or test this, a popular model in statistics and signal processing is the signal-plus-noise model, where the signals are corrupted by Gaussian noise: that is, $\yb_j=\mathbf{s}_j+\xb_j$.
	Figure \ref{figure2} shows that, by combining our results in Theorem \ref{theo:normald=1}, we can potentially study the detection of complicated signals from the signal-plus-noise model due to the very different behaviors of the edge eigenvalues under the signal setup and the Gaussian noise setup. In particular, in Figure \ref{fig:sub21}, we compare the first few non-trivial edge eigenvalues of RGGs constructed from two compactly supported signals with those of standard Gaussian noise. The signal cases exhibit very different behavior compared to the noise.
	Moreover, based on this observation, we can construct some statistics to detect whether the signals are clean. For example, we can use
	\begin{equation}
		\mathbb{T}\coloneq\sum_{k=1}^{M}|\lambda_{K_0+k}(\Lb^{\yb})-\nu_{k+1}|/M,\label{eq:t}
	\end{equation}
	where $\Lb^{\yb}$ is the counterpart of $\Lb$ in \eqref{eq:2.1.2} built from ${\yb_j}$, and ${\nu_{k+1}}$ are the eigenvalues of an operator associated with the signal distribution, which can be computed directly (see, e.g.,~\cite{garcia2020error}).
	
	When the noise level of Gaussian noise $\sigma$ is small so that $\yb_j \approx \mathbf{s}_j$, we expect $\mathbb{T}$ to be small. When $\sigma$ is large and $\yb_j\approx\xb_j$, we expect that the limit of $\lambda_{K_0+k}(\Lb^{\yb})$ can be described by Example \ref{example:eg1}. This phenomenon is illustrated in Figure \ref{fig:sub22}, suggesting a potential use of $\mathbb{T}$ for signal detection. Since the rigorous justification of this application is beyond the scope of the present paper, we will pursue this direction in future work.	
	\begin{figure}[!ht] 
		\centering 
		
		\begin{subfigure}[b]{0.48\textwidth}
			\centering
			\includegraphics[width=8cm, height=6cm]{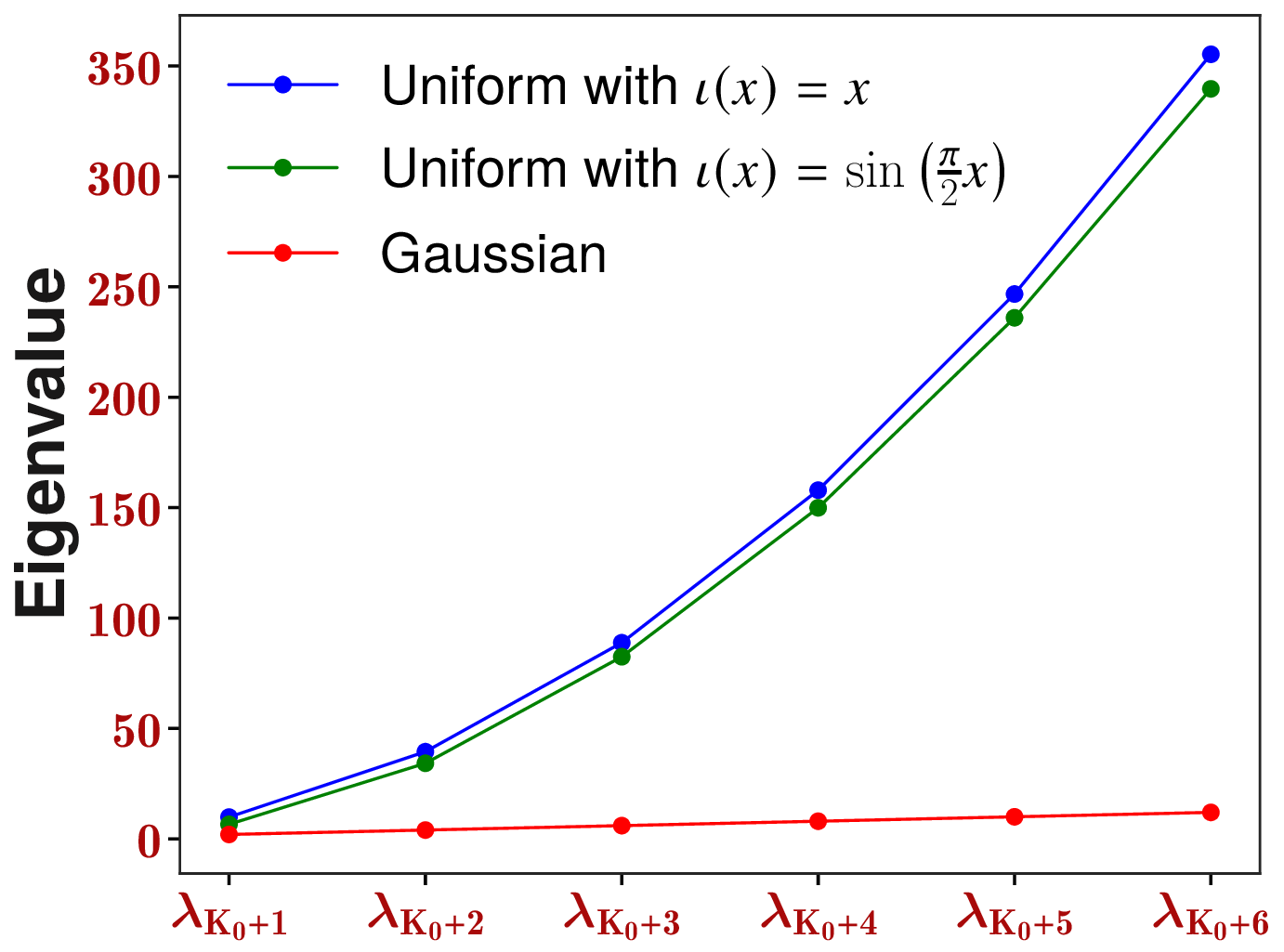}
			\caption{Behavior of eigenvalues}
			\label{fig:sub21}
		\end{subfigure}
		\hfill 
		\begin{subfigure}[b]{0.455\textwidth}
			\centering
			\includegraphics[width=8cm, height=6cm]{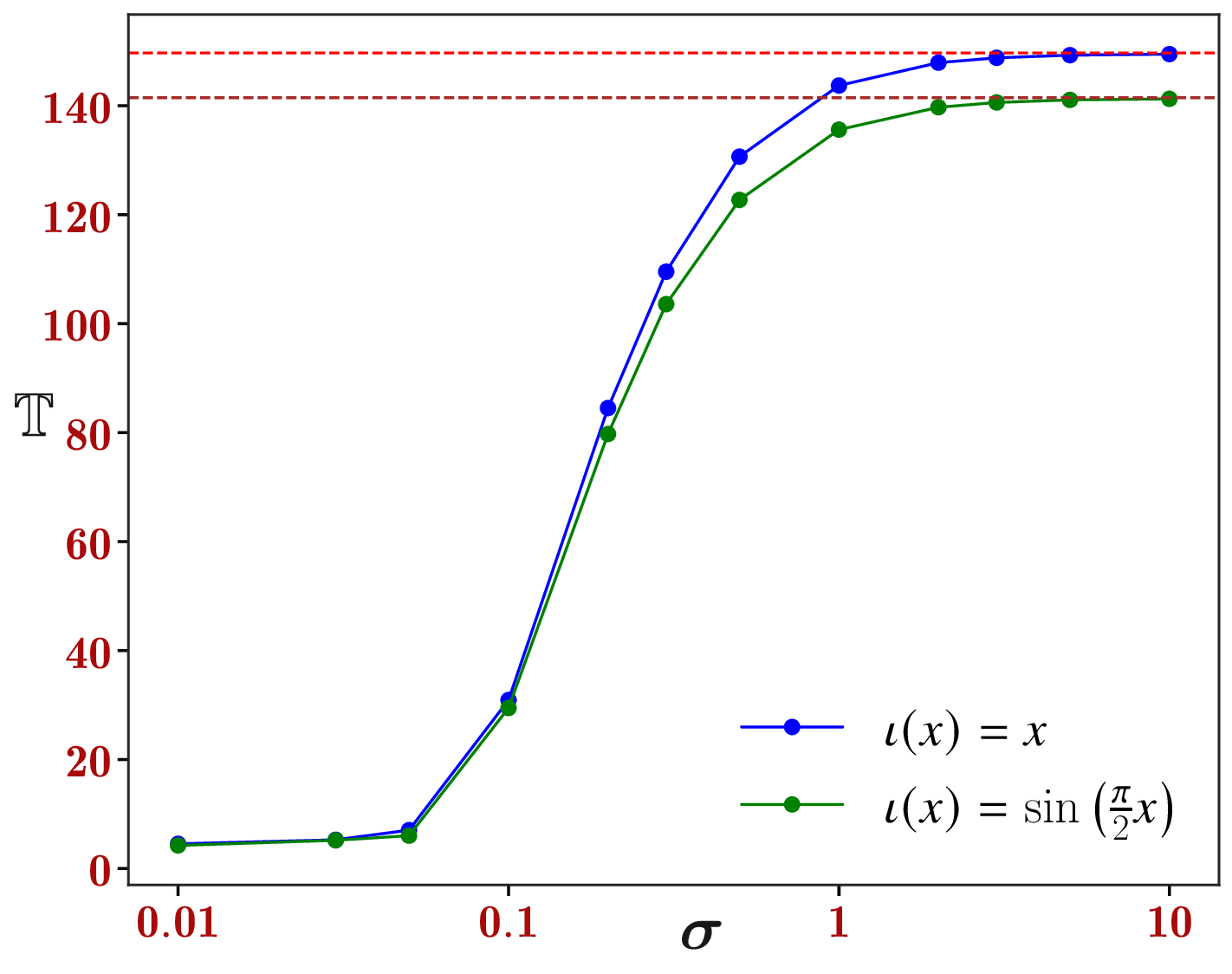}
			\caption{Behavior of $\mathbb{T}$}
			\label{fig:sub22}
		\end{subfigure}
		\caption{{Behavior of edge eigenvalues across different setups. Here we use the setup of Example \ref{example:eg1} with $n=5{,}000$, $g\equiv 1$ and $r_n=0.05$ in (\ref{eq_originalK}). Figure \ref{fig:sub21} shows their theoretical limits for RGGs built from (i) clean signal $\mathbf{s}_j=\iota(\zb_j)$ with $\iota(x)=x$ and $\zb_j\sim\mathrm{Unif}[0,1]$; (ii) clean signal $\mathbf{s}_j=\iota(\zb_j)$ with $\iota(x)=\sin(\pi x/2)$ and $\zb_j\sim\mathrm{Unif}[0,1]$; and (iii) Gaussian noise $\xb_j\sim\mathcal{N}(0,1)$. Figure \ref{fig:sub22} illustrates how the eigenvalues of $\Lb^{\yb}$, constructed from the signal-plus-noise model $\yb_j=\mathbf{s}_j+\xb_j$, $\xb_j \sim \mathcal{N}(0, \sigma^2),$ deviate from the limiting values in the clean signal cases (i)–(ii) as $\sigma$ varies, as measured by $\mathbb{T}$ with $M=6$ in \eqref{eq:t}. We point out that when $\sigma$ becomes large, the values of $\mathbb{T}$ can be predicted using the limiting behavior described in Example \ref{example:eg1}, as illustrated by the dashed lines.  All results are based on 1,000 repetitions.}}
		\label{figure2}
	\end{figure}
	
	\subsection{Proof strategy}\label{sec:stg}
	Before discussing our proof strategy, we first point out that when the density function $\varrho$ of $\{\xb_j\}_{j=1}^\infty$ has compact support and is bounded from above and below, the edge eigenvalues of $\Lb$ have been studied in the literature, for example, \cite{calder2022improved,cheng2022eigen,garcia2020error}. However, none of those critical assumptions hold in our setup. Consequently, the techniques developed in those papers cannot be carried over to study our problem. For example, when $\varrho$ has unbounded support, the  infinity transportation distances used in \cite{calder2022improved,garcia2020error} become infinity. As a result, the errors introduced by transportation-based interpolation between the eigenvectors of $\Lb$ and the eigenfunctions of $\Dr$ in \cite{calder2022improved,garcia2020error} will also become infinity, which indicates that the idea of \cite{calder2022improved,garcia2020error} cannot be applied. For another instance, in \cite{cheng2022eigen}, the interpolation-based errors rely on the $\ell_\infty$ norm of the eigenfunctions of $\Dr$. However, in our setup that $\varrho$ has unbounded support, as can be seen in the proof of Theorem \ref{theo:eigen}, the $\ell_\infty$ norm of the eigenfunctions of $\Dr$ will be unbounded, and the interpolation-based error bound will diverge so that the idea in \cite{cheng2022eigen} cannot be applied. Therefore, new ideas are needed to study $\Lb$ in our setup.
	
	In our first step, unlike \cite{cheng2022eigen,garcia2020error}, we will construct a suitable empirical operator to connect $\Lb$ and $\Dr$. Similar ideas have been used in \cite{belkin2006convergence,calder2022improved,cheng2022convergence,coifman2006diffusion,dunson2021spectral,hein2007graph,singer2006graph,singer2017spectral}. However, the ideas from the aforementioned works cannot be applied to our setup as $\operatorname{supp}(\varrho)=\R^d$. Especially, the empirical operator directly based on $\Lb$, denoted as
	\begin{align}
		\mathcal{L}_n f(\xb)\coloneq\frac{2m_0}{m_2r_n^2}\frac{\tfrac{1}{n}\sum_{j=1}^n\bm{1}(\normp{\xb-\xb_j}\le r_n)g(\normp{\xb-\xb_j}/r_n)(f(\xb)-f(\xb_j))}{\tfrac{1}{n}\sum_{j=1}^n\bm{1}(\normp{\xb-\xb_j}\le r_n)g(\normp{\xb-\xb_j}/r_n)},\label{eq:2.2.4}
	\end{align}
	is not well-defined for all $\xb\in\R^d$ such that $\normp{\xb-\xb_j}>r_n$ for all $j=1,\cdots,n$. In order to address this issue, we instead construct an empirical operator based on a smoothed version of $\Lb$. Specifically, we smooth the hard cut-off $\bm{1}_{[0,1]}(\cdot)$ with a sigmoid function and construct a smoothed affinity matrix 
	\begin{align}
		\Ka(i,j)=\sig\left(\alpha_n(r_n^2 -\normp{\xb_i-\xb_j}^2)\right)g^*(\normp{\xb_i-\xb_j}/r_n),\quad \sig(t)=(1+\exp(-t))^{-1},\label{eq:2.2.1}
	\end{align}
	where $g^*\in C^2([0,\infty))$ satisfies $g\equiv g^*$ on $[0,1]$ and $0<g^*(y)\le \sup_{y\ge0} g^*(y)<\infty$ for $y\in[0,\infty)$.
	The corresponding scaled random-walk Laplacian matrix is given by
	\begin{align}
		\Lc\coloneq\frac{2m_0}{m_2r_n^2}(\Ib-\Da^{-1}\Ka),\label{eq:2.2.2}
	\end{align}
	where $\Da = \operatorname{diag}(\Da(i,i))$ with $\Da(i,i) = \sum_{j=1}^n \Ka(i,j)$.
	As we will show in Lemma \ref{lemma:sig2ind}, for suitably chosen large $\alpha_n$, $\sig\left(\alpha_n(r_n^2 -\normp{\xb_i-\xb_j}^2)\right)$ sharply approximates $\bm{1}_{[0,1]}(\normp{\xb_i-\xb_j}/r_n)$.
	Together with the boundedness of $g^*(\cdot)$, the entries of $\Kb$ and $\Ka$ are close with high probability regardless of whether $\normp{\xb_i-\xb_j}\le r_n$ or $\normp{\xb_i-\xb_j}>r_n$, which implies that $\Lb$ and $\Lc$ are also close in operator norm.
	
	Then, we construct an empirical operator $\sLn:\cF\rightarrow\cF$ based on $\Lc$ defined by
	\begin{align}
		\sLn f(\xb)\coloneq\frac{2m_0}{m_2r_n^2}\frac{\tfrac{1}{n}\sum_{j=1}^n\sig(\alpha_n(r_n^2-\normp{\xb-\xb_j}^2))g^*(\normp{\xb-\xb_j}/r_n)(f(\xb)-f(\xb_j))}{\tfrac{1}{n}\sum_{j=1}^n\sig(\alpha_n(r_n^2-\normp{\xb-\xb_j}^2))g^*(\normp{\xb-\xb_j}/r_n)}.\label{eq:2.2.3}
	\end{align}
	There are several advantages of using \eqref{eq:2.2.3}. Firstly, compared to \eqref{eq:2.2.4}, \eqref{eq:2.2.3} is well-defined for all $\xb\in\R^d$. Secondly, it can be shown that except for at most one diverging eigenvalue, the remaining eigenvalues of $\sLn$ and $\Lc$ coincide (see Lemma \ref{lemma:S2Sn} or \cite[Proposition 9]{von2008consistency}). Moreover, we can show that the edge eigenvalues of $\sLn$ are bounded from above. Consequently, this builds a one-to-one correspondence between the edge eigenvalues of $\sLn$ and $\Lc$. As a result, we build the connection between the edge eigenvalues of $\sLn$ and those of $\Lb$.
	
	The rest of the work remains to connect the eigenvalues of $\sLn$ to those of $\Dr$. Existing literature provides two possible strategies, \cite{calder2022improved,cheng2022eigen,garcia2020error} apply the min-max theorem by interpolations based on optimal transport or heat kernel while \cite{dunson2021spectral,shen2022scalability,singer2017spectral} aim at controlling $\norm{\sLn-\Dr}$. The ideas from the aforementioned works require either compact support or bounded $\ell_\infty$ norm of the eigenfunctions of $\Dr$ and they often assume the eigenvalues of $\Dr$ are simple. However, none of the assumptions are satisfied in our setup. In our paper, we propose an eigenvalue-counting-based method. For any fixed integer $M_0>0$, let $\mu_1^*<\mu_2^*<\cdots<\mu_{M_0}^*$ denote the first $M_0$ distinct non‑trivial eigenvalues of $\Dr$, which are separated by a constant gap (see Theorem \ref{theo:eigen}). For arbitrary small $\epsilon>0$, we can split $(\delta,\mu_{M_0}^*+\epsilon)$ with $\delta$ in \eqref{eq:2.1} into the following three possible types of intervals: $(\mu_j^*-\epsilon,\mu_j^*+\epsilon)$ for $j=1,\cdots,M_0$, $[\mu_{j}^*+\epsilon,\mu_{j+1}^*-\epsilon]$ for $j=1,\cdots,M_0-1$ and $(\delta,\mu_1^*-\epsilon]$. We can show that for small $\delta$ in \eqref{eq:2.1}, $K_0=\oo(n)$ with high probability, and at least one of the eigenvalues of $\sLn$ is located in each $(\mu_j^*-\epsilon,\mu_j^*+\epsilon)$ with high probability. Moreover, none of the eigenvalues of $\sLn$ falls into $\cup_{j=1}^{M_0-1}[\mu_{j}^*+\epsilon,\mu_{j+1}^*-\epsilon]$ or $(\delta,\mu_1^*-\epsilon]$ with high probability. In a second step, we will use a counting formula (Theorem \ref{theo:counting}) to show that $\sLn$ and $\Dr$ have the same number of eigenvalues (with multiplicities) inside each $(\mu_j^*-\epsilon,\mu_j^*+\epsilon)$ with high probability. The key ingredient to establish the results is to show that for some properly chosen testing functions $\{\varphi_{j,l}\}$, with high probability,
	\begin{align}
		\normf{(\sLn-\Dr)\varphi_{j,l}}=\oo(1).\label{eq:ingredient}
	\end{align}
	To build such a result, we utilize a bias-variance-type argument by introducing a deterministic operator
	\begin{align}
		\mathcal{T}_nf(\xb)\coloneq\frac{2}{r_n^{d+2}m_2\varrho(\xb)}\int_{\R^d}\bm{1}(\normp{\xb-\yb}\le r_n)g(\normp{\xb-\yb}/r_n)(f(\xb)-f(\yb))\varrho(\yb)\dd \yb,\label{eq:2.2.5}
	\end{align}which can be regarded as a close approximation of $\E\sLn$. Then by careful and detailed analysis of the eigenfunctions of $\Dr$, we can show that $\normf{(\mathcal{T}_n-\sLn)\varphi_{j,l}}=\oo(1)$ with high probability and $\normf{(\mathcal{T}_n-\Dr)\varphi_{j,l}}=\oo(1)$. This completes our proof.
	
	
	\section{Spectrum of $\Dr$}\label{sec_3}
	In this section, we study the spectrum of $\Dr$ in \eqref{eq:2.1.1} with $\varrho$ in \eqref{eq:2.1.4}. The results provide the testing functions used in the proof of Theorem \ref{theo:normald=1} and the limits of the first few non-trivial eigenvalues of $\Lb$ in Proposition \ref{prop:d>1}. The main results are stated in Theorem \ref{theo:eigen} below. Recall that the physicist's Hermite polynomials \cite{szeg1939orthogonal} are defined as, for $j\in\N$,
	$$H_j(x)=(-1)^j e^{x^2}\frac{\dd^j}{\dd x^j}e^{-x^2},\quad x\in\R.$$
	
	\begin{theorem}
		\label{theo:eigen}
		For the weighted Laplace-Beltrami operator $\Dr$ on $(\cF,\Rprod{\cdot}{\cdot})$ in \eqref{eq:2.1.3}, its spectrum is discrete and consists of eigenvalues $\sum_{i=1}^d 2(k_i-1)/\sigma_i^2$ with corresponding orthonormal eigenfunctions $$\prod_{i=1}^{d}\psi_{i,k_i-1}(x_i),\quad k_1,\cdots,k_d\in\N^+,$$ where 
		$$\psi_{i,k_i-1}(x_i)=\left(\frac{1}{\sqrt{\pi}(k_i-1)!2^{k_i-1}\sigma_i}\right)^{1/2}H_{k_i-1}(x_i/\sigma_i),$$ 
		and $H_{k_i-1}$ is the $(k_i-1)$-th physicist's Hermite polynomial.
	\end{theorem}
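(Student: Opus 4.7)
\textbf{Proof proposal for Theorem \ref{theo:eigen}.} The plan is to reduce to a separable one-dimensional problem and then invoke the classical Hermite theory. First, I would expand the operator in coordinates. Since $\varrho(\xb)=\exp(-\tfrac12\sum_i x_i^2/\sigma_i^2)$, a direct computation gives $\nabla\log\varrho^2=-2\sum_i (x_i/\sigma_i^2)\,e_i$, and hence
\begin{align}
\Dr f = -\Delta f + 2\sum_{i=1}^d \frac{x_i}{\sigma_i^2}\,\frac{\partial f}{\partial x_i} = \sum_{i=1}^d L_i f, \qquad L_i f := -\frac{\partial^2 f}{\partial x_i^2} + \frac{2x_i}{\sigma_i^2}\,\frac{\partial f}{\partial x_i}.\notag
\end{align}
The key structural observation is that each $L_i$ acts only on the variable $x_i$ and the weight factorizes as $\varrho^2(\xb)=\prod_i e^{-x_i^2/\sigma_i^2}$. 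Consequently the Hilbert space $\cF$ is isometric to the tensor product $\bigotimes_{i=1}^d L^2(\R, e^{-x_i^2/\sigma_i^2}\dd x_i)$, and $\Dr$ is the closure of $\sum_i I\otimes\cdots\otimes L_i\otimes\cdots\otimes I$ on the algebraic tensor product of the one-dimensional domains.

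Second, I would solve each one-dimensional eigenvalue problem $L_i \psi = \mu\psi$ in $L^2(\R,e^{-x_i^2/\sigma_i^2}\dd x_i)$ by rescaling. Setting $y=x_i/\sigma_i$ and $\psi(x_i)=H(y)$, the equation becomes the classical Hermite equation $-H''(y)+2yH'(y)=\sigma_i^2\mu\,H(y)$, whose polynomial solutions are exactly the physicist's Hermite polynomials $H_{k}(y)$ with eigenvalues $\sigma_i^2\mu=2k$ for $k\in\N$. This produces the eigenvalues $\mu=2k/\sigma_i^2$ and eigenfunctions $H_{k}(x_i/\sigma_i)$ for $L_i$. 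The normalization follows from the standard identity $\int_{\R} H_k(y)^2 e^{-y^2}\dd y=\sqrt{\pi}\,k!\,2^k$ and a change of variables $x_i=\sigma_i y$, yielding the constants in the statement.

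Third, I would assemble the $d$-dimensional spectrum via the tensor product structure. Since products of one-dimensional eigenfunctions,
\begin{align}
\Phi_{\bk}(\xb)=\prod_{i=1}^d \psi_{i,k_i-1}(x_i), \qquad \bk=(k_1,\dots,k_d)\in(\N^+)^d,\notag
\end{align}
are joint eigenfunctions of all the mutually commuting $L_i$'s, they are eigenfunctions of $\Dr=\sum_i L_i$ with eigenvalue $\sum_i 2(k_i-1)/\sigma_i^2$. Orthonormality in $\cF$ is immediate from the factorization of the weight and of the inner product. The final ingredient is \emph{completeness}: the rescaled Hermite polynomials are a complete orthonormal basis of $L^2(\R,e^{-x_i^2/\sigma_i^2}\dd x_i)$ (a classical fact), so the tensor products $\{\Phi_{\bk}\}$ form a complete orthonormal basis of $\cF$. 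This in turn implies that $\Dr$ (defined initially on polynomials and closed in $\cF$) is essentially self-adjoint and its spectrum is precisely the closure of the set of eigenvalues listed above, which is discrete since the eigenvalues are of the form $\sum_i 2(k_i-1)/\sigma_i^2$ tending to $+\infty$.

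I do not expect any serious obstacle here; the main technical care is (i) verifying that the operator defined weakly via $\Rprod{\Dr f}{g}=\int \nabla f\cdot\nabla g\,\varrho^2\dd\xb$ coincides on smooth rapidly decreasing functions with the differential expression derived above, and (ii) invoking completeness of Hermite polynomials to rule out any additional spectrum. Both steps are standard once the separation of variables is in place, so the Hermite decomposition effectively pins down the entire spectrum.
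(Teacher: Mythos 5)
Your proposal is correct and uses the same overall strategy as the paper: separate variables so that $\Dr=\sum_i L_i$ acts on the tensor product of the one-dimensional spaces $\cF_i$, reduce each one-dimensional problem to the Hermite equation via $y=x_i/\sigma_i$, normalize the Hermite polynomials to get the orthonormal eigenfunctions, and assemble the $d$-dimensional spectrum multiplicatively. The one genuine difference is how the two arguments rule out extraneous spectrum. You appeal to completeness of the rescaled Hermite polynomials in each $\cF_i$ together with essential self-adjointness (which follows since the operator has a complete orthonormal set of eigenvectors in its domain, so the deficiency indices vanish), and conclude that the spectrum is exactly the listed eigenvalue set. The paper instead performs a direct ODE classification: it constructs two linearly independent power-series solutions $\mathsf f_{i,1},\mathsf f_{i,2}$ of the one-dimensional problem, recognizes them as confluent hypergeometric functions ${}_1F_1$, and uses the large-$|x_i|$ asymptotics of ${}_1F_1$ (Lemma \ref{lemma:chfkarge}) to show that whenever $\lambda_i\sigma_i^2\notin\{0,2,4,\dots\}$ the solutions grow like $e^{x_i^2/\sigma_i^2}$ and thus fail to lie in $\cF_i$, whereas at those integer values the series truncates to a Hermite polynomial. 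Your route is shorter and leans on the abstract spectral theorem; the paper's is more self-contained, explicitly exhibiting every solution of the ODE and checking membership in $\cF_i$ without presupposing self-adjointness. Both are valid, and both rely on the same completeness of the Hermite tensor basis to conclude that the family of eigenfunctions exhausts $\cF$.
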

	\begin{remark}
		\label{rmk:eigen}
		For any eigenvalue $\lambda$ of $\Dr$, its multiplicity equals the dimension of 
		$$\operatorname{span}\left\{\prod_{i=1}^d\psi_{i,k_i-1}(x_i):\sum_{i=1}^d 2(k_i-1)/\sigma_i^2=\lambda,\quad k_1,\cdots,k_d\in\N^+\right\}.$$
		We can see that zero eigenvalue is always simple. Moreover, when $d=1$, all eigenvalues of $\Dr$ are simple, as in Example \ref{example:eg1}. When $d\ge2$, some of the eigenvalues can be identical. Furthermore, for any finite index sequence $\{(k_1,\cdots,k_d)\}$, the associated eigenvalues are bounded from above. In addition, by our assumption on $\delta$ in \eqref{eq:delta}, $\mu_2(\Dr)>\delta$.
	\end{remark}
	
	The proof of Theorem \ref{theo:eigen} relies on the observation that the eigenvalues and eigenfunctions of $\Dr$ can be constructed using the solutions of $d$ ordinary differential equations, which are all the Hermite differential equations \cite{szeg1939orthogonal} up to change of variables. Moreover, the analytical properties of the eigenfunctions can be studied using the confluent hypergeometric functions \cite[Chapter 13]{olver2010nist}. 
	
	\begin{proof}
		To solve the eigenvalue problem, we seek solutions to
		\begin{align}
			\Dr f=\lambda f,\quad f\in\cF.\label{eq:4.1}
		\end{align}
		Recalling $\varrho(\xb)$ in \eqref{eq:2.1.4}, we can rewrite \eqref{eq:4.1} as
		\begin{align}
			\sum_{i=1}^d\frac{\partial^2f}{\partial x_i^2}-\sum_{i=1}^d \frac{2x_i}{\sigma_i^2}\frac{\partial f}{\partial x_i}+\lambda f=0,\quad f\in\cF,\quad f(\xb)\not\equiv0.\label{eq:4.2}
		\end{align}
		According to \eqref{eq:A2.1.3}, to solve \eqref{eq:4.2}, it suffices to solve
		\begin{align}
			\mathsf{f}_i''(x_i)-\frac{2x_i}{\sigma_i^2}\mathsf{f}_i'(x_i)+\lambda_i\mathsf{f}_i(x_i)=0,\quad \mathsf{f}_i\in\cF_i,\quad \mathsf{f}_i(x_i)\not\equiv0,\quad i=1,\cdots,d,\label{eq:4.3}
		\end{align}
		where $\sum_{i=1}^d\lambda_i=\lambda$ and $\cF_i$ is the Hilbert space defined as 
		\begin{align}
			\cF_i\coloneq\left\{\mathsf{f}_i:\R\rightarrow\R\biggm|\int_\R \mathsf{f}_i^2(x_i)\exp\left(-x_i^2/\sigma_i^2\right)\dd x_i<\infty\right\},\label{eq:4.4}
		\end{align}
		with corresponding inner product 
		\begin{align}
			\Rprod{\mathsf{f}_i}{\mathsf{g}_i}_i\coloneq\int_\R\mathsf{f}_i(x_i)\mathsf{g}_i(x_i)\exp\left(-x_i^2/\sigma_i^2\right)\dd x_i,\quad \mathsf{f}_i,\mathsf{g}_i\in\cF_i,\label{eq:4.9}
		\end{align}
		and induced norm
		\begin{align}
			\norm{\mathsf{f}_i}_{\mathcal{F}_i}=\sqrt{\Rprod{\mathsf{f}_i}{\mathsf{f}_i}_i}.\label{eq:4.9.1}
		\end{align}
		
		We begin solving \eqref{eq:4.3}. First, we find the general solution to \eqref{eq:4.3}. By Theorem \ref{theo:ode2}, it suffices to find two linearly independent solutions $\mathsf{f}_{i,1}$ and $\mathsf{f}_{i,2}$ and any solution $\mathsf{f}_i$ to \eqref{eq:4.3} can be written as a linear combination $c_{i,1}\mathsf{f}_{i,1}+c_{i,2}\mathsf{f}_{i,2}$ for some $c_{i,1},c_{i,2}\in\R$. We start seeking solutions that can be represented by the following power-series expansion:
		\begin{align}
			\mathsf{f}_i(x_i)=\sum_{j=0}^{\infty}b_{i,j}x_i^j,\label{eq:4.5}
		\end{align}
		where $\{b_{i,j}\}_{j=0}^\infty$ are coefficients ensuring that $\mathsf{f}_i$ has an infinite radius of convergence and is therefore analytic on $\R$. By Cauchy–Hadamard theorem and Weierstrass $M$-test \cite{rudin1976principles}, this implies that the series in \eqref{eq:4.5} converges uniformly on every compact subset of $\R$. Combining with the analyticity of $\mathsf{f}_i$ on $\R$, we further write:
		\begin{align*}
			\mathsf{f}_i'(x_i)&=\sum_{j=1}^{\infty}jb_{i,j}x_i^{j-1},\quad \mathsf{f}_i''(x_i)=\sum_{j=2}^{\infty}j(j-1)b_{i,j}x_i^{j-2}.
		\end{align*}
		Substituting the above expansion into \eqref{eq:4.3} and matching coefficients of $x_i^j$ gives, for all $j\ge0$,
		$$b_{i,j+2}=\frac{2j/\sigma_i^2-\lambda_i}{(j+2)(j+1)}b_{i,j}.$$
		To find two linearly independent solutions to \eqref{eq:4.3}, it suffices to impose some initial conditions so that we can find two linearly independent solutions to \eqref{eq:4.3} in terms of \eqref{eq:4.5}. Without loss of generality, we assume that 
		$$\mathsf{f}_i(0)=1,\mathsf{f}_i'(0)=0\text{ and }\mathsf{f}_i(0)=0,\mathsf{f}_i'(0)=1.$$
		This yields two linearly independent functions—an even $\mathsf{f}_{i,1}$ and an odd $\mathsf{f}_{i,2}$:
		\begin{align}
			\mathsf{f}_{i,1}(x_i)&=1-\frac{\lambda_i}{2!}x_i^2-\frac{(4/\sigma_i^2-\lambda_i)\lambda_i}{4!}x_i^4-\cdots,\label{eq:4.6}\\
			\mathsf{f}_{i,2}(x_i)&=x_i+\frac{(2/\sigma_i^2-\lambda_i)}{3!}x_i^3+\frac{(6/\sigma_i^2-\lambda_i)(2/\sigma_i^2-\lambda_i)}{5!}x_i^5+\cdots.\label{eq:4.7}
		\end{align}
		By Cauchy–Hadamard theorem \cite{rudin1976principles}, both $\mathsf{f}_{i,1}(x_i)$ and $\mathsf{f}_{i,2}(x_i)$ have infinite radius of convergence. So $\mathsf{f}_{i,1}$ and $\mathsf{f}_{i,2}$ are two linearly independent solutions to \eqref{eq:4.3} in terms of \eqref{eq:4.5}. Combining the above discussion, we see that $\{\mathsf{f}_{i,1},\mathsf{f}_{i,2}\}$ forms a fundamental system and any solution $\mathsf{f}_i$ to \eqref{eq:4.3} can be written as a linear combination $c_{i,1}\mathsf{f}_{i,1}+c_{i,2}\mathsf{f}_{i,2}$ for some $c_{i,1},c_{i,2}\in\R$.
		
		Second, we determine which of these solutions belong to $\cF_i$. We analyze the asymptotics of $\mathsf{f}_{i,1}(x_i)$ and $\mathsf{f}_{i,2}(x_i)$ as $|x_i|\rightarrow\infty$. By Remark \ref{remark:chf1}, Lemma \ref{lemma:chfkarge} and Remark \ref{remark:chf2}, we have that
		\begin{align*}
			\mathsf{f}_{i,1}(x_i)\sim\frac{\Gamma(1/2)}{\Gamma(-\lambda_i\sigma_i^2/4)}e^{x_i^2/\sigma_i^2}|x_i/\sigma_i|^{-\lambda_i\sigma_i^2/2-1}\quad(|x_i|\rightarrow\infty),
		\end{align*}
		when $\lambda_i\sigma_i^2\notin\{0,4,8,\cdots\}$. Hence, we have
		$$\mathsf{f}_{i,1}^2(x_i)\exp\left(-x_i^2/\sigma_i^2\right)\sim e^{x_i^2/\sigma_i^2}|x_i/\sigma_i|^{-\lambda_i\sigma_i^2-2},$$
		which diverges as $|x_i|\rightarrow\infty$. Combining with \eqref{eq:4.4}, this implies that $\mathsf{f}_{i,1}\notin\cF_i$ when $\lambda_i\sigma_i^2\notin\{0,4,8,\cdots\}$. On the other hand, when $\lambda_i=4s/\sigma_i^2$ for $s\in\{0,1,2,\cdots\}$, $\mathsf{f}_{i,1}$ becomes $H_{2s}(x_i/\sigma_i)$ (up to normalization), where $H_{2s}$ is the $2s$-th physicist's Hermite polynomial \cite{szeg1939orthogonal} and has degree $2s$. This implies that $\mathsf{f}_{i,1}(x_i)$ is square-integrable with respect to the measure $e^{-x_i^2/\sigma_i^2}\dd x_i$. Consequently, $\mathsf{f}_{i,1}\in\cF_i$ if and only if $\lambda_i=0,4/\sigma_i^2,8/\sigma_i^2,\cdots$. Similarly, $\mathsf{f}_{i,2}(x_i)$ is square-integrable with respect to the measure $e^{-x_i^2/\sigma_i^2}\dd x_i$ if and only if $\lambda_i=2/\sigma_i^2,6/\sigma_i^2,10/\sigma_i^2,\cdots$. This yields $H_1(x_i/\sigma_i),H_3(x_i/\sigma_i),H_5(x_i/\sigma_i),\cdots$ (up to normalization). Combining the two cases, the only $\lambda_{i}$ admitting solutions in $\cF_i$ are $0,2/\sigma_i^2,4/\sigma_i^2,\cdots$. Moreover, the corresponding eigenfunctions of \eqref{eq:4.3} are $\{H_{m}(x_i/\sigma_i)\}_{m=0}^\infty$, up to normalization.
		
		Next, we normalize $\{H_{m}(x_i/\sigma_i)\}_{m=0}^\infty$ to form a complete orthonormal basis of $(\cF_i,\Rprod{\cdot}{\cdot}_i)$ to address the eigenvalue problem \eqref{eq:4.3}. By the orthogonality property of Hermite polynomials \cite{szeg1939orthogonal}, we have
		\begin{align*}
			\int_\R H_l(x_i/\sigma_i)H_s(x_i/\sigma_i)e^{-x_i^2/\sigma_i^2}\dd x_i=\sqrt{\pi}2^l l!\sigma_i\bm{1}_{\{l=s\}}.
		\end{align*}
		Normalizing each $H_m(x_i/\sigma_i)$ yields the orthonormal eigenfunctions: 
		\begin{align}
			\psi_{i,m}(x_i)=\left(\frac{1}{\sqrt{\pi}m!2^m\sigma_i}\right)^{1/2}H_m(x_i/\sigma_i),\text{ for }m=0,1,\cdots.\label{eq:4.8}
		\end{align}
		This shows that the solutions to \eqref{eq:4.3} are $2(k_i-1)/\sigma_i^2$ with corresponding orthonormal eigenfunctions $\psi_{i,k_i-1}(x_i)$ for $k_i\in\N^+$.
		
		Now we construct the eigenvalues and eigenfunctions of $\Dr$ using those of \eqref{eq:4.3}. By the arguments below Theorem \ref{theo:ode2}, $\left\{\prod_{i=1}^d\psi_{i,k_i-1}(x_i)\right\}_{k_1,\cdots,k_d\in\N^+}$ constitutes a complete orthonormal basis of $\cF$. Combining with the fact that each $\prod_{i=1}^d\psi_{i,k_i-1}(x_i)$ is a solution to \eqref{eq:4.2} with $\lambda=\sum_{i=1}^d 2(k_i-1)/\sigma_i^2$, we have that \eqref{eq:4.2} is solvable if and only if $$\lambda\in\left\{\sum_{i=1}^d 2(k_i-1)/\sigma_i^2\right\}_{k_1,\cdots,k_d\in\N^+},$$
		and any solution $f(\xb)$ to \eqref{eq:4.2} satisfies
		$$f(\xb)\in\operatorname{span}\left\{\prod_{i=1}^d\psi_{i,k_i-1}(x_i):\sum_{i=1}^d 2(k_i-1)/\sigma_i^2=\lambda,\quad k_1,\cdots,k_d\in\N^+\right\}.$$
		Consequently, the spectrum of $\Dr$ on $(\cF,\Rprod{\cdot}{\cdot})$ is discrete and consists of eigenvalues $\sum_{i=1}^d 2(k_i-1)/\sigma_i^2$ with corresponding orthonormal eigenfunctions $\prod_{i=1}^{d}\psi_{i,k_i-1}(x_i)$ for $k_1,\cdots,k_d\in\N^+$. This completes our proof.
	\end{proof}

	\section{Proof of main results}\label{sec:mainproof}
	In this section, we prove the main results of this paper. We first show how Theorem \ref{theo:normald=1} implies Proposition \ref{prop:d>1}.
	
	\begin{proof}[\bf{Proof of Proposition \ref{prop:d>1}}]
		By Theorem \ref{theo:eigen}, we know that the eigenvalues of $\Dr$ in \eqref{eq:2.1.1} on $(\cF,\Rprod{\cdot}{\cdot})$ are 
		$$\left\{\sum_{i=1}^d 2(k_i-1)/\sigma_i^2\right\}_{k_1,\cdots,k_d\in\N^+},$$
		counted with multiplicities. Recall \eqref{eq:2.1.5}. We can then complete the proof of Proposition \ref{prop:d>1} with Theorem \ref{theo:normald=1}.
	\end{proof}
	
	The rest of the section leaves to prove Theorem \ref{theo:normald=1}. We divide the proof into two part. In Section \ref{sec:k0}, we will bound $K_0$ in \eqref{eq:2.1}. Then in Sections \ref{sec:converge} and \ref{sec:prop}, we will prove \eqref{eq:main}.
	
	\subsection{Bounding $K_0$}\label{sec:k0}
	In light of the relation between the eigenvalues of $\Lb_{rw}$ in \eqref{eq:1.1} and $\Lb$ in \eqref{eq:2.1.2}, we will work directly with $\Lb_{rw}$. For convenience of notation, denote 
	\begin{align}
		\xi(d)\coloneq (d+2)^2/2.\label{eq:xid}
	\end{align}
	The proof relies on our observation that the majority of the eigenvalues of $\Lb_{rw}$, denoted as $\{\lambda_l^*\}_{l=1}^n$, will be around 1 in the sense that $\frac{1}{n}\sum_{l=1}^n(1-\lambda_l^*)^2$ can be bounded by $\OO(n^{-1/\xi(d)})$ with high probability. 
	Consequently, it is impossible to have more than $n^{1-2/(2\xi(d)+\eta)}$ eigenvalues of $\Lb_{rw}$ deviating significantly below 1, which yields the $n^{1-2/(2\xi(d)+\eta)}$ upper bound for $K_0$.
	\begin{proof}
		Recall $\Lb_{rw}$ in \eqref{eq:1.1}. We denote the eigenvalues of $\Lb_{rw}$ as follows,
		$$\lambda_1^*\le\lambda_2^*\le\cdots\le\lambda_n^*.$$
		We can see that for $l=1,\cdots,n$, $\lambda_l^*=\frac{m_2r_n^2}{2m_0}\lambda_l(\Lb)$, with $m_0,m_2$ as in \eqref{eq:mj} and $\Lb$ in \eqref{eq:2.1.2}. Therefore, showing $K_0\le n^{1-\frac{2}{2\xi(d)+\eta}}$ is equivalent to showing that the number of eigenvalues of $\Lb_{rw}$ smaller than or equal to $\frac{m_2\delta}{2m_0}r_n^2$ is less than or equal to $n^{1-\frac{2}{2\xi(d)+\eta}}$, where $\delta$ is in \eqref{eq:2.1}.
		We now prove $K_0\le n^{1-\tfrac{2}{2\xi(d)+\eta}}$ by contradiction. We assume to the contrary that, for all sufficiently large $n$, 
		\begin{align}
			K_0>n^{1-\tfrac{2}{2\xi(d)+\eta}}.\label{eq:5.1assump}
		\end{align}
		The assumption \eqref{eq:5.1assump} indicates that for all $l\le n^{1-\tfrac{2}{2\xi(d)+\eta}}$, 
		\begin{align}
			\lambda_l^*\le\frac{m_2\delta}{2m_0}r_n^2.\label{eq:5.1assump1}
		\end{align}
		Recall $\Kb$ in \eqref{eq_originalK} and $\Db$ in \eqref{eq:1.1}. Applying \eqref{eq:HSnorm} to $\Db^{-1}\Kb$ gives
		\begin{align}
			\frac{1}{n}\sum_{l=1}^n|\lambda_l(\Db^{-1}\Kb)|^2\le\frac{1}{n}\sum_{j=1}^n\sum_{s=1}^n |(\Db^{-1}\Kb)_{js}|^2,\label{eq:5.1.0}
		\end{align}
		where $\lambda_l(\Db^{-1}\Kb)$ is the $l$-th eigenvalue of $\Db^{-1}\Kb$ and $(\Db^{-1}\Kb)_{js}$ is the $(j,s)$-th entry of $\Db^{-1}\Kb$. Observe that the eigenvalues of $\Db^{-1}\Kb$ are $\{1-\lambda_l^*\}_{l=1}^n$ and $(\Db^{-1}\Kb)_{js}=\Kb(j,s)/\Db(j,j)$. Therefore, \eqref{eq:5.1.0} becomes
		\begin{align}
			\frac{1}{n}\sum_{l=1}^n(1-\lambda_l^*)^2\le\frac{1}{n}\sum_{j=1}^n\sum_{s=1}^n\left(\frac{\Kb(j,s)}{\Db(j,j)}\right)^2.\label{eq:5.1.1}
		\end{align}
		For sufficiently large $n$, we have that
		\begin{align}
			\frac{1}{n}\sum_{l=1}^n(1-\lambda_l^*)^2&\ge\frac{1}{n}\sum_{l=1}^{\floor{n^{1-2/(2\xi(d)+\eta)}}}(1-\lambda_l^*)^2\notag\\
			&\ge n^{-\tfrac{2}{2\xi(d)+\eta}}/2,\label{eq:5.1.2}
		\end{align}
		where in the last step we used \eqref{eq:5.1assump1} and the fact that $\frac{m_2\delta}{2m_0}r_n^2=\oo(1)$.
		Under the boundedness of $g(\cdot)$ in Theorem \ref{theo:normald=1}, there exist $C_1,C_2$ such that $0<C_1\le g(y)\le C_2$ for any $y\in[0,1]$. This indicates that for any $\xb_j,\xb_s$,
		\begin{align}
			C_1\bm{1}(\normp{\xb_j-\xb_s}\le r_n)\le \Kb(j,s)\le C_2\bm{1}(\normp{\xb_j-\xb_s}\le r_n).\label{eq:5.1.3}
		\end{align}
		Denote the number of points connected to $\xb_j$ as
		\begin{align}
			\mathsf{d}_j\coloneq\sum_{s=1}^n\bm{1}(\normp{\xb_s-\xb_j}\le r_n)\ge1.\label{eq:5.1.11}
		\end{align}
		Then we have that
		\begin{align}
			\frac{1}{n}\sum_{j=1}^n\sum_{s=1}^n\left(\frac{\Kb(j,s)}{\Db(j,j)}\right)^2&\le\frac{1}{n}\sum_{j=1}^n\sum_{s=1}^n\left(\frac{C_2\bm{1}(\normp{\xb_j-\xb_s}\le r_n)}{C_1\mathsf{d}_j}\right)^2\notag\\
			&=\frac{C_2^2}{C_1^2n}\sum_{j=1}^{n}\frac{1}{\mathsf{d}_j},\label{eq:5.1.5}
		\end{align}
		where the first step is from \eqref{eq:5.1.3}. Combining \eqref{eq:5.1.1}, \eqref{eq:5.1.2} and \eqref{eq:5.1.5} gives, 
		\begin{align}
			\frac{1}{n}\sum_{j=1}^{n}\frac{1}{\mathsf{d}_j}\ge\frac{C_1^2}{2C_2^2}n^{1-\tfrac{2}{2\xi(d)+\eta}}.\label{eq:5.1.6}
		\end{align}
		
		To contradict \eqref{eq:5.1.6}, on the other hand, we will show that with probability $1-\oo(1)$, $\frac{1}{n}\sum_{j=1}^{n}\frac{1}{\mathsf{d}_j}=\OO(n^{-1/(\xi(d))})=\oo(n^{-2/(2\xi(d)+\eta)})$. Before we proceed, we define a bulk region $B_n\subset\R^d$ by
		\begin{align}
			B_n\coloneq \prod_{i=1}^d\left[-\sigma_i\sqrt{\log n/(d^2/4+d+1)},\sigma_i\sqrt{\log n/(d^2/4+d+1)}\right],\label{eq:5.1.8}
		\end{align}
		where $\prod$ denotes Cartesian product for sets, and set the corresponding tail region $T_n=\R^d\setminus B_n$. Therefore, we split all points $\xb_j$ into the bulk points in $B_n$ and the tail points in $T_n$. To show $\frac{1}{n}\sum_{j=1}^{n}\frac{1}{\mathsf{d}_j}=\oo(n^{-2/(2\xi(d)+\eta)})$, we will show that there are only $\OO(n^{1-1/(\xi(d)}))$ tail points, and that for any $\xb_j$ in the bulk region, $\mathsf{d}_j\ge n^{1/2}$.
		
		First, we show that there are only $\OO(n^{1-1/(\xi(d))})$ points in the tail region. 
		Denote $p_n\coloneq\P(\xb\in T_n)$, where $\xb=(x_1,\cdots,x_d)^\top\sim\mathcal{N}_d(\bm{0},\Sigma)$ with $\Sigma$ in \eqref{eq:2.1.0}. Recall \eqref{eq:xid}. We have that
		\begin{align}
			p_n&=\P\left(\bigcup_{i=1}^d\left\{x_i\notin \left[-\sigma_i\sqrt{\log n/(d^2/4+d+1)},\sigma_i\sqrt{\log n/(d^2/4+d+1)}\right]\right\}\right)\notag\\
			&\le\sum_{i=1}^d \P\left(\left|\frac{x_i}{\sigma_i}\right|\ge\sqrt{\log n/(d^2/4+d+1)}\right)\le2d\exp\left(-\frac{1}{d^2/2+2d+2}\log n\right)=2dn^{-1/(\xi(d))},\label{eq:5.3}
		\end{align}
		where in the second step we used the union bound and in the third step we used the fact that $x_i/\sigma_i\sim\mathcal{N}(0,1)$ and the Gaussian tail bound \cite[Section 2.1.2]{wainwright2019high} that for $z\sim\mathcal{N}(0,1)$ and any $t\ge0$,
		$$\P(|z|\ge t)\le2\exp\left(-t^2/2\right).$$
		On the other hand, we have that 
		\begin{align}
			p_n&=\P\left(\bigcup_{i=1}^d\left\{x_i\notin \left[-\sigma_i\sqrt{\log n/(d^2/4+d+1)},\sigma_i\sqrt{\log n/(d^2/4+d+1)}\right]\right\}\right)\notag\\
			&\ge\P\left(\left|\frac{x_i}{\sigma_i}\right|\ge\sqrt{\log n/(d^2/4+d+1)}\right)\gtrsim \frac{1}{\sqrt{\log n}}\exp\left(-\frac{1}{d^2/2+2d+2}\log n\right)\ge n^{-1/(d^2/2+2d)},\label{eq:5.3a}
		\end{align}
		where in the third step we the Gaussian tail bound \cite[Section 7.1]{feller1991introduction} that $z\sim\mathcal{N}(0,1)$ and any $t\ge2$,
		$$\P(|z|\ge t)\ge \frac{2}{\sqrt{2\pi}}(t^{-1}-t^{-3})\exp\left(-t^2/2\right)\ge t^{-1}\exp(-t^2/2)/3.$$
		Moreover, since $\xb_j\stackrel{\operatorname{i.i.d.}}{\sim}\mathcal{N}_d(\bm{0},\Sigma)$, we have that
		$$\bm{1}(\xb_j\in T_n)\stackrel{\mathrm{i.i.d.}}{\sim}\operatorname{Ber}(p_n),\quad j=1,\cdots,n.$$
		Now we proceed to control the summation of $\bm{1}(\xb_j\in T_n)$, $j=1,\cdots,n$.
		We have that
		\begin{align}
			\P\left(\sum_{j=1}^n\bm{1}(\xb_j\in T_n)-np_n\ge \frac{1}{2}np_n\right)&\le\exp\left(-\frac{np_n}{12}\right)\le\exp\left(-Cn^{1-1/(d^2/2+2d)}\right)=\oo(1),\notag
		\end{align}where in the first step we used Lemma \ref{lemma:chernoff}, and in the second step we used \eqref{eq:5.3a} and $C$ is some absolute positive constant. Consequently, with probability $1-\oo(1)$, there are at most $1.5np_n\le3dn^{1-1/(\xi(d))}=\OO(n^{1-1/(\xi(d))})$ points in the tail region $T_n$.
		
		Second, we show that with probability $1-\oo(1)$, for all $\xb_j\in B_n$, $\mathsf{d}_j\ge n^{1/2}$. Fix one $\xb_{j_0}\in B_n$, we have that for all $j\ne j_{0}$, $\bm{1}(\normp{\xb_j-\xb_{j_0}}\le r_n)|\xb_{j_0}\stackrel{\mathrm{i.i.d.}}{\sim}\operatorname{Ber}(q_{n,j_0})$,
		where
		\begin{align}
			q_{n,j_0}\coloneq\int_{\normp{\xb_{j_0}-\xb}\le r_n}\prod_{i=1}^d\left(\sigma_i^{-1}\phi\left(\frac{x_{j_0,i}}{\sigma_i}\right)\right)\dd \xb,\label{eq:5.1.7}
		\end{align}
		and $\phi(\cdot)$ is the density of standard Gaussian distribution.
		For sufficiently large $n$ and any $\xb=(x_1,\cdots,x_d)^\top$ such that $\normp{\xb-\xb_{j_0}}\le r_n$, 
		$$\left|\frac{x_i}{\sigma_i}\right|\le\left|\frac{x_{j_0,i}+r_n}{\sigma_i}\right|\le\sqrt{\log n/(d^2/4+d+1)}+r_n/\sigma_i\le\sqrt{\log n/(d^2/4+d)},\quad i=1,\cdots, d,$$
		where in the second step we used \eqref{eq:5.1.8} and in the last step we used \eqref{eq:2.2}. Combining with \eqref{eq:5.1.7}, we have that 
		\begin{align}
			q_{n,j_0}&\ge\int_{\normp{\xb_{j_0}-\xb}\le r_n}\prod_{i=1}^d\left(\sigma_i^{-1}\frac{1}{\sqrt{2\pi}}\exp\left(-\frac{1}{d^2/2+2d}\log n\right)\right)\dd \xb\asymp r_n^d n^{-1/(d/2+2)},\label{eq:5.1.12}
		\end{align}
		where in the last step we used the fact that $\int_{\normp{\xb_{j_0}-\xb}\le r_n}\dd\xb\asymp r_n^d$. Hence, conditioning on $\xb_{j_0}$, we have that
		\begin{align}
			&\P\left(\left|\sum_{j\ne j_0}\bm{1}(\normp{\xb_j-\xb_{j_0}}\le r_n)-(n-1)q_{n,j_0}\right|\ge\frac{1}{2}(n-1)q_{n,j_0}\right)\notag\\
			\le&2\exp\left(-\frac{(n-1)q_{n,j_0}}{12}\right)\le2\exp(-C_3 n^{1-\tfrac{d}{d+4}-\tfrac{2}{d+4}})=2\exp(-C_3n^{2/(d+4)})=\oo(n^{-2}),\notag
		\end{align}
		where in the first step we used Lemma \ref{lemma:chernoff} and the fact that $0\le\bm{1}(\normp{\xb_j-\xb_{j_0}}\le r_n)|\xb_{j_0}\le1$, in the second step we used \eqref{eq:5.1.12} and \eqref{eq:2.2} and $C_3$ is some absolute positive constant. Since the conditional probability bound in \eqref{eq:5.1.8} is free of $\xb_{j_0}$, it holds unconditionally for any $\xb_{j_0}\in B_n$. Consequently, by taking the union bound over at most $n$ points $\xb_j$ such that $\xb_j\in B_n$, we have that for sufficiently large $n$, with probability $1-\oo(n^{-1})$, for all $\xb_j$ such that $\xb_j\in B_n$,
		\begin{align}
			\mathsf{d}_j&=1+\sum_{s\ne j}\bm{1}(\normp{\xb_j-\xb_s}\le r_n)\ge\frac{1}{2}(n-1)q_{n,j}\gtrsim n^{2/(d+4)},\label{eq:5.1.9}
		\end{align}
		where the second step is from \eqref{eq:5.1.8} and the last step is from \eqref{eq:5.1.12} and \eqref{eq:2.2}.
		
		Combining the above discussion, we have that with probability $1-\oo(1)$,
		\begin{align}
			\frac{1}{n}\sum_{j=1}^n\frac{1}{\mathsf{d}_j}&=\frac{1}{n}\sum_{\{j:\xb_j\in T_n\}}\frac{1}{\mathsf{d}_j}+\frac{1}{n}\sum_{\{j:\xb_j\in B_n\}}\frac{1}{\mathsf{d}_j}\notag\\
			&\lesssim\frac{1}{n}\sum_{\{j:\xb_j\in T_n\}}1+\frac{1}{n}\sum_{\{j:\xb_j\in B_n\}}\frac{1}{n^{2/(d+4)}}\notag\\
			&=\OO(n^{-1/(\xi(d)))})+\OO(n^{-2/(d+4)})=\OO(n^{-1/(\xi(d)))}),\label{eq:5.1.10}
		\end{align}
		where in the second step we used \eqref{eq:5.1.11} and \eqref{eq:5.1.9} and the last step is from the fact that for $d\ge1$, $d^2/2+2d+2\ge (d+4)/2$. Note that for sufficiently large $n$, \eqref{eq:5.1.10} contradicts \eqref{eq:5.1.6} that
		$$\frac{1}{n}\sum_{j=1}^n\frac{1}{\mathsf{d}_j}\gtrsim n^{-2/(2\xi(d)+\eta)}.$$ 
		Therefore, our assumption \eqref{eq:5.1assump} cannot hold. 
		Consequently, we have that with probability $1-\oo(1)$, $K_0\le n^{1-2/(2\xi(d)+\eta)}$. Recall \eqref{eq:xid} that $\xi(d)=(d+2)^2/2$. This completes our proof.
	\end{proof}
	
	\subsection{Proof of \eqref{eq:main}}\label{sec:converge}
	Recall $\Lb$ in \eqref{eq:2.1.2}, $\Dr$ in \eqref{eq:2.1.1}, $\Lc$ in \eqref{eq:2.2.2} and $\sLn$ in \eqref{eq:2.2.3}. We will follow the strategy described in Section \ref{sec:stg} to complete the proof. We first show the closeness between the eigenvalues of $\Lb$ and $\sLn$ by the closeness between the indicator function in $\Lb$ and its sigmoid-type approximation in $\Lc$ and the construction of $\sLn$ from $\Lc$. Then, we use Proposition \ref{prop:pointwise} below and a counting formula (Theorem \ref{theo:counting}) to employ an eigenvalue-counting argument, which establishes a one-to-one correspondence between the first $M$ non-trivial eigenvalues of $\sLn$ and those of $\Dr$. Combining the above, we show the convergence of the first $M$ non-trivial eigenvalues of $\Lb$ to those of $\Dr$.
	\begin{proof}
		We begin by comparing the eigenvalues of $\Lb$ and $\sLn$. Recall $\Lc$ in \eqref{eq:2.2.2}, which is a smoothed version of $\Lb$. We list the eigenvalues of $\Lc$ as $\lambda_1(\Lc)\le\lambda_2(\Lc)\le\cdots\le\lambda_n(\Lc)$.
		Applying Lemma \ref{lemma:S2L}, we have that whenever $\alpha_n$ in \eqref{eq:2.2.1} satisfies that $\alpha_n\gg n^{7/2}$, for sufficiently large $n$, with probability $1-\oo(1)$,
		\begin{align}
			\lambda_j(\Lb)=\lambda_{j}(\Lc)+\oo(1),\quad\text{for all } j=1,\cdots,n.\label{eq:5.2.0.0}
		\end{align}
		On the other hand, from Lemma \ref{lemma:S2Sn}, we have that $\lambda_j(\Lc)=\lambda_j(\sLn)$ for all $1\le j\le n$ whenever $\lambda_j(\sLn)<\frac{2m_0}{m_2r_n^2}$. Combining with \eqref{eq:5.2.0.0}, we have that with probability $1-\oo(1)$, for all $j$ such that $\lambda_j(\sLn)<\frac{2m_0}{m_2r_n^2}$,
		\begin{align}
			\lambda_j(\Lb)=\lambda_{j}(\sLn)+\oo(1).\label{eq:lbsln}
		\end{align}
		
		Next, we compare the eigenvalues of $\sLn$ and $\Dr$. Before we proceed, we choose some testing functions from $\Dr$. Let $0<\mu_1^*<\mu_2^*<\cdots$ be the distinct non-trivial eigenvalues of $\Dr$ with corresponding multiplicities $n_1,n_2,\cdots$ and eigenfunctions
		$$\varphi_{1,1},\cdots\varphi_{1,n_1},\quad\varphi_{2,1},\cdots\varphi_{2,n_2},\quad\cdots.$$
		We choose an integer $M_0$ such that $\sum_{j=1}^{M_0}n_j\ge M$ and use the associated eigenfunctions $\{\varphi_{j,l}\}_{1\le j\le M_0,1\le l\le n_j}$ as our testing functions. 
		Our proof relies on the following proposition.
		\begin{proposition}
			\label{prop:pointwise}
			Let $\varphi$ be an eigenfunction of $\Dr$ on $(\cF,\Rprod{\cdot}{\cdot})$. If the assumption \eqref{eq:2.2} holds, then for sufficiently large $n$, with probability $1-\oo(1)$,
			\begin{align*}
				\normf{(\Dr-\sLn)\varphi}=\oo(1),
			\end{align*}
			with $\normf{\cdot}$ as in \eqref{eq:fnorm}.
		\end{proposition}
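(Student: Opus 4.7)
The plan is to decompose $\normf{(\Dr-\sLn)\varphi}$ through the deterministic intermediate operator $\mathcal{T}_n$ in \eqref{eq:2.2.5}, which can be viewed as the population analogue of $\sLn$. By the triangle inequality,
\begin{align*}
\normf{(\Dr-\sLn)\varphi} \le \normf{(\Dr-\mathcal{T}_n)\varphi} + \normf{(\mathcal{T}_n-\sLn)\varphi},
\end{align*}
so it suffices to control a deterministic bias term and a random variance term separately. Throughout, I use that by Theorem \ref{theo:eigen}, $\varphi$ is a product of normalized physicist's Hermite polynomials in the coordinates $x_i/\sigma_i$, and hence polynomial in $\xb$; together with the Gaussian weight $\varrho^2$ built into $\normf{\cdot}$, this makes all polynomial-growth factors of $\xb$ square-integrable.

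For the bias term, I Taylor-expand both $\varphi(\yb)-\varphi(\xb)$ and $\varrho(\yb)$ around $\xb$ inside the integral in \eqref{eq:2.2.5}. Odd-order monomials of $\yb-\xb$ integrate to zero by symmetry of the ball $\{\bm{u}:\normp{\bm{u}}\le 1\}$, the second-order terms produce exactly $\Dr\varphi(\xb)$ after using the identity $\div(\varrho^2\nabla\varphi)=\varrho^2\Delta\varphi+\nabla\varrho^2\cdot\nabla\varphi$, and the constants $m_0,m_2$ from \eqref{eq:mj} are exactly the prefactors required so that these quadratic pieces assemble into $\Dr\varphi$. The remainder is $\OO(r_n^2)$ multiplied by a fixed polynomial in $\xb$ coming from higher derivatives of $\varphi$ and from $\varrho^{-1}\partial^\alpha\varrho$ (which are themselves polynomials in $\xb$). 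Squaring and integrating against $\varrho^2$ gives a finite Gaussian moment, so this piece is $\OO(r_n^2)=\oo(1)$ by \eqref{eq:2.2}.

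For the variance term, I first absorb the sigmoid-versus-indicator mismatch exactly as in the passage from $\Lb$ to $\Lc$: with $\alpha_n$ chosen large enough (as in Lemma \ref{lemma:sig2ind}), replacing the sigmoid factor in $\sLn$ by $\bm{1}(\normp{\xb-\xb_j}\le r_n)$ contributes only a negligible additive error. What remains is, at each $\xb$, a ratio of two i.i.d.\ empirical averages (numerator and denominator of \eqref{eq:2.2.3}) compared to the corresponding population ratio that defines $\mathcal{T}_n\varphi(\xb)/\varrho(\xb)$. To control this in $\cF$-norm I mirror the bulk/tail split of Section \ref{sec:k0}: let $B_n$ be the axis-aligned box in \eqref{eq:5.1.8} and $T_n=\R^d\setminus B_n$. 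On $B_n$, the estimate \eqref{eq:5.1.12} gives a lower bound $\gtrsim r_n^d n^{-1/(d/2+2)}$ on the denominator's mean, so Bernstein's inequality plus a union bound over a fine net of $B_n$ and a Lipschitz argument yield uniform concentration of the ratio with pointwise error $\oo(1)$; integrating the squared error against $\varphi^2\varrho^2$ over $B_n$ gives $\oo(1)$ by dominated convergence. On $T_n$, I use a deterministic polynomial-in-$\xb$ a priori bound for $\sLn\varphi$ (and the analogous one for $\mathcal{T}_n\varphi$, which is polynomial by construction), then rely on the super-polynomial decay of $\varrho^2$ over $T_n$ to kill the contribution.

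The hard part is precisely the tail step. Unlike the bounded-support literature, here the denominator of $\sLn$ degenerates as $\xb$ moves out into the Gaussian tail, so one cannot hope for uniform concentration on all of $\R^d$; the Bernstein bound used on $B_n$ is worthless on $T_n$. The resolution rests on a careful quantitative balance: the lower bound $r_n\gg n^{-1/(d+4)+\epsilon}$ in \eqref{eq:2.2} ensures enough neighbors inside $B_n$ so that the bulk concentration is uniformly $\oo(1)$, while the choice of the threshold defining $B_n$ makes the polynomial-growth tail estimate for the variance term integrable against $\varrho^2$ to leading order $n^{-c}$. Getting these two scales to cooperate, and in particular verifying that the deterministic pointwise upper bound on $\sLn\varphi$ over $T_n$ does not grow faster than the Gaussian decay can absorb, is the main technical obstacle and is exactly what dictates the form of condition \eqref{eq:2.2}.
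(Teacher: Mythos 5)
Your decomposition through $\mathcal{T}_n$, with a bias piece and a variance piece, is the right structure and matches the paper. For the bias part your description of the mechanism (odd moments vanish by symmetry, the second-moment terms with the constants $m_0,m_2$ assemble into $\Dr\varphi$, remainder $\OO(r_n^2)$) is the correct idea; the paper implements this via the exact Hermite generating-function identity \eqref{eq:C3.1} rather than a raw Taylor expansion, which sidesteps some Lagrange-remainder bookkeeping, but this is the same route.

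There is however a genuine gap in your variance step: the bulk region you commit to is wrong, and with it the tail estimate fails. You propose taking the bulk box to be $B_n$ from \eqref{eq:5.1.8}, whose half-width along coordinate $i$ is $\sigma_i\sqrt{\log n/(d^2/4+d+1)}$. That radius was calibrated for the counting argument of Section \ref{sec:k0}, not for controlling the weighted $L^2$-norm here. On $B_n^{\complement}$ the Gaussian weight near the boundary is only $\varrho^2(\xb)\asymp n^{-1/(d^2/4+d+1)}$, which is \emph{polynomially} small with exponent at most $4/9$ (at $d=1$; it only shrinks for larger $d$). But after pulling out the $2m_0/(m_2 r_n^2)$ scaling from $\sLn$, the tail contribution to $\normf{\cdot}^2$ must be $\oo(r_n^4)\lesssim n^{-4/(d+4)+4\epsilon}$; since $1/(d^2/4+d+1)<4/(d+4)$ for all $d\ge 1$, the bound $n^{-1/(d^2/4+d+1)}$ does not beat $r_n^{-4}$ and the step collapses. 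The paper instead uses a \emph{different} and much larger bulk region $B_1$ in \eqref{eq:bulk}, with radius $\sigma_i\sqrt{2(1+\beta)\log n}$, chosen precisely so that $\varrho^2\lesssim n^{-2(1+\beta)}$ on $B_1^{\complement}$, which dominates $r_n^{-4}$. Relatedly, your tail treatment ("a polynomial-in-$\xb$ a priori bound, then Gaussian decay") underestimates what is needed for the piece analogous to $\mathsf{E}_4$: one needs the exponential decay of the sigmoid $\mathsf{s}(\alpha_n(r_n^2-\normp{\xb-\xb_j}^2))$ at points $\xb$ far from the data cloud (using Lemma \ref{lemma:xj} to confine all $\xb_j$ into a ball of size $\sqrt{\log n}$) to beat the $\exp(2\norm{\xb}_\Sigma^2)$ blowup caused by the deterministic lower bound on the population denominator $D_2$ there; a polynomial bound alone cannot do this. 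Finally, as a secondary remark, your "Bernstein plus net plus Lipschitz" proposal for bulk uniformity is a legitimate alternative in principle, but it is heavier machinery than what is actually required: since $\normf{\cdot}$ is an $L^2$-type norm, the paper simply computes $\E\normf{\cdot}^2$ by Fubini and applies Markov's inequality, which avoids any uniformity-over-$\xb$ argument.
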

		\noindent We defer the proof of Proposition \ref{prop:pointwise} to Section \ref{sec:prop}. By Proposition \ref{prop:pointwise}, with probability $1-\oo(1)$, for all $j=1,\cdots,M_0$ and $l=1,\cdots,n_j$,
		\begin{align}
			\normf{(\Dr-\sLn)\varphi_{j,l}}=\oo(1).\label{eq:5.2.0.1}
		\end{align}
		
		Now we proceed to apply the eigenvalue-counting argument. Combining \eqref{eq:2.1} with \eqref{eq:lbsln}, we have that for sufficiently large $n$, the trivial eigenvalues of $\sLn$ will all locate in the interval $[0,\delta]$. Next, we choose some sufficiently small positive constant $\vartheta\equiv\vartheta(n)=\oo(1)$, whose choice will be specified later. Therefore, to study the convergence of first $M$ non-trivial eigenvalues of $\sLn$ to those of $\Dr$, we can restrict our attention to $(\delta,\mu_{M_0}^*+\vartheta)$. Moreover, we partition $(\delta,\mu_{M_0}^*+\vartheta)$ into the following three possible types of intervals: 
		\begin{align*}
			(\mu_j^*-\vartheta,\mu_j^*+\vartheta)\text{ for } j=1,\cdots,M_0;\quad [\mu_{j}^*+\vartheta,\mu_{j+1}^*-\vartheta]\text{ for }j=1,\cdots,M_0-1;\quad \text{and }(\delta,\mu_1^*-\vartheta].
		\end{align*}
		We illustrate this partition in Figure \ref{fig:partition}.
		\begin{figure}[htbp]
			\centering
			\begin{tikzpicture}[x=1.3cm,y=1cm]
				
				\coordinate (zero) at (0,0);    
				\coordinate (d)    at (1.5,0);  
				\coordinate (m1m)  at (3.0,0);  
				\coordinate (m1)   at (3.6,0);  
				\coordinate (m1p)  at (4.2,0);  
				\coordinate (m2m)  at (5.3,0);  
				\coordinate (m2)   at (5.9,0);  
				\coordinate (m2p)  at (6.5,0);  
				\coordinate (mMm)  at (8.4,0);  
				\coordinate (mM)   at (9.0,0);  
				\coordinate (mMp)  at (9.6,0);  
				
				\draw[dashed,gray,shorten <=2pt,shorten >=2pt] (zero) -- (d);
				
				\draw (d) -- (m2p);
				
				\draw (mMm) -- (mMp);
				
				\draw[dashed,gray,shorten <=2pt,->] (mMp) -- (11,0) node[below right] {$\mathbb{R}$};
				
				\foreach \x/\lab in {
					0/{$0$},
					1.5/{$\delta$},
					3.0/{$\mu_1^*-\vartheta$},
					4.2/{$\mu_1^*+\vartheta$},
					5.3/{$\mu_2^*-\vartheta$},
					6.5/{$\mu_2^*+\vartheta$},
					8.4/{$\mu_{M_0}^*-\vartheta$},
					9.6/{$\mu_{M_0}^*+\vartheta$}
				}{
					\draw (\x,0.14) -- (\x,-0.14);
					\node[below,yshift=-2pt] at (\x,0) {\scriptsize \lab};
				}
				\foreach \P/\txt in {m1/{$\mu_1^*$}, m2/{$\mu_2^*$}, mM/{$\mu_{M_0}^*$}}{
					\draw (\P)+(0,0.14) -- (\P)+(0,-0.14);
					\node[above,yshift=2pt] at (\P) {\scriptsize \txt};
				}
				
				\draw[very thick, teal] (d) -- (m1m);              
				\draw[very thick, red]  (m1m) -- (m1p);            
				\draw[very thick, blue] (m1p) -- (m2m);            
				\draw[very thick, red]  (m2m) -- (m2p);            
				\draw[very thick, blue, dashed,shorten <=2pt,shorten >=2pt]
				(m2p) -- (mMm);                                
				\draw[very thick, red]  (mMm) -- (mMp);            
				
				\begin{scope}[shift={(0,-0.9)}]
					\draw[very thick, teal] (0,0) -- (0.9,0) node[right]{\scriptsize $(\delta,\mu_1^*-\vartheta]$};
					\draw[very thick, red]  (3,0) -- (3.9,0) node[right]{\scriptsize $(\mu_j^*-\vartheta,\mu_j^*+\vartheta)$};
					\draw[very thick, blue] (6,0) -- (6.9,0) node[right]{\scriptsize $[\mu_j^*+\vartheta,\mu_{j+1}^*-\vartheta]$};
				\end{scope}
				
			\end{tikzpicture}
			\caption{Partition of $(\delta,\mu_{M_0}^*+\vartheta)$. $[0,\delta]$ is the trivial region and will not be studied by our eigenvalue-counting argument, $(\mu_j^*-\vartheta,\mu_j^*+\vartheta)$'s are the counting regions, and $[\mu_{j}^*+\vartheta,\mu_{j+1}^*-\vartheta]$'s and $(\delta,\mu_1^*-\vartheta]$ are the empty regions.}
			\label{fig:partition}
		\end{figure}
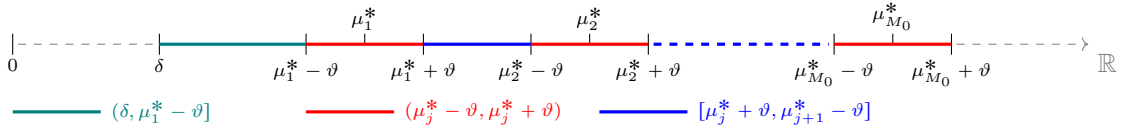
		The rest of the proof is carried out in three steps. In Step one, we show that each $(\mu_j^*-\vartheta,\mu_j^*+\vartheta)$ contains at least one eigenvalue of $\sLn$. In Step two, we show that exactly $n_j$ eigenvalues of $\sLn$ (counting multiplicities) lie in $(\mu_j^*-\vartheta,\mu_j^*+\vartheta)$. In Step three, we show that none of the eigenvalues of $\sLn$ lie in the other two types of empty regions $[\mu_{j}^*+\vartheta,\mu_{j+1}^*-\vartheta]$ and $(\delta,\mu_1^*-\vartheta]$.
		\par\vspace{.5\baselineskip}
		\noindent\textbf{Step one.}\;
		Denote 
		$$\operatorname{dist}(\mu_j^*,\sigma(\sLn))\coloneq\min_{l}|\mu_j^*-\lambda_{l}(\sLn)|.$$
		Now consider the following two cases. 
		When $\operatorname{dist}(\mu_j^*,\sigma(\sLn))=0$, then $\mu_j^*$ itself is an eigenvalue of $\sLn$, so the interval $(\mu_j^*-\vartheta,\mu_j^*+\vartheta)$ contains at least one eigenvalue of $\sLn$. On the other hand, when $\operatorname{dist}(\mu_j^*,\sigma(\sLn))>0$, we have that
		\begin{align}
			\norm{R(\mu_j^*,\sLn)}=1/\operatorname{dist}(\mu_j^*,\sigma(\sLn)),\label{eq:5.2.0.2}
		\end{align}
		where $R(z,\sLn)=(\sLn-z\mathcal{I})^{-1}$ is the resolvent of $\sLn$. Since $\varphi_{j,1}=R(\mu_j^*,\sLn)(\sLn-\mu_j^*)\varphi_{j,1}$, we have
		\begin{align}
			1=\normf{\varphi_{j,1}}\le\norm{R(\mu_j^*,\sLn)}\normf{(\sLn-\mu_j^*)\varphi_{j,1}},\label{eq:5.2.0.3}
		\end{align}
		where the last step follows from Cauchy-Schwarz inequality. Since $\normb{R(\mu_j^*,\sLn)}$ is strictly positive by \eqref{eq:5.2.0.2}, we have that
		\begin{align*}
			\operatorname{dist}(\mu_j^*,\sigma(\sLn))=\frac{1}{\normb{R(\mu_j^*,\sLn)}}\le\normf{(\sLn-\mu_j^*)\varphi_{j,1}}=\normf{(\sLn-\Dr)\varphi_{j,1}},
		\end{align*}
		where in the second step we used \eqref{eq:5.2.0.3} and the last step is from the fact that $\Dr\varphi_{j,1}=\mu_j^*\varphi_{j,1}$. By \eqref{eq:5.2.0.1}, $\normf{(\sLn-\Dr)\varphi_{j,1}}=\oo(1)$ with probability $1-\oo(1)$. Hence, for sufficiently large $n$, $\vartheta$ can be chosen such that $\operatorname{dist}(\mu_j^*,\sigma(\sLn))<\vartheta$.
		Combining the two cases, we conclude that with probability $1-\oo(1)$, for $j=1,\cdots,M_0$, there always exists at least one eigenvalue of $\sLn$ inside $(\mu_j^*-\vartheta,\mu_j^*+\vartheta)$.
		\par\vspace{.5\baselineskip}
		\noindent\textbf{Step two.}\;
		Without loss of generality, we assume there are $n_j+1$ eigenvalues of $\sLn$ inside of $(\mu_j^*-\vartheta,\mu_j^*+\vartheta)$. Define a positively oriented contour around $\mu_j^*$ by
		\begin{align}
			\Gamma_j\coloneq\{z\in\C:|z-\mu_j^*|=\vartheta\}.\label{eq:diskj}
		\end{align}
		Observe that $\Dr$ and $\sLn$ are defined on $\cF$ for which $\{\varphi_{j,l}\}_{j\ge0,1\le l\le n_j}$ is an orthonormal basis where $n_0=1$ and $\varphi_{0,1}\equiv\prod_{i=1}^d\left(\frac{1}{\sqrt{\pi}\sigma_i}\right)^{1/2}$. Then by Theorem \ref{theo:counting}, we have that
		\begin{align}
			n_j&=-\frac{1}{2\pi\operatorname{i}}\oint_{\Gamma_j}\operatorname{Tr}(R(z,\Dr))\dd z\equiv\sum_{s=0}^\infty\sum_{l=1}^{n_s}-\frac{1}{2\pi\operatorname{i}}\oint_{\Gamma_j}\Rprod{\varphi_{s,l}}{R(z,\Dr)\varphi_{s,l}}\dd z,\label{eq:5.2.0.4}\\
			n_j+1&=-\frac{1}{2\pi\operatorname{i}}\oint_{\Gamma_j}\operatorname{Tr}(R(z,\sLn))\dd z\equiv\sum_{s=0}^\infty\sum_{l=1}^{n_s}-\frac{1}{2\pi\operatorname{i}}\oint_{\Gamma_j}\Rprod{\varphi_{s,l}}{R(z,\sLn)\varphi_{s,l}}\dd z,\label{eq:5.2.0.5}
		\end{align}
		where the second equations in \eqref{eq:5.2.0.4} and \eqref{eq:5.2.0.5} are from \eqref{eq:trace}. Since $\varphi_{s,l}$'s are eigenfunctions of $\Dr$, for $s\ne j$ and $1\le l\le n_s$,
		\begin{align}
			-\frac{1}{2\pi\operatorname{i}}\oint_{\Gamma_j}\Rprod{\varphi_{s,l}}{R(z,\Dr)\varphi_{s,l}}\dd z=-\frac{1}{2\pi\operatorname{i}}\oint_{\Gamma_j} \frac{1}{\mu_s^*-z}\dd z=0,\label{eq:5.2.0.7}
		\end{align}
		where in the last step we used Theorem \ref{them:residue} and the fact that $\mu_j^*\notin \Gamma_j\cup\operatorname{Int}(\Gamma_j)$. Combining with \eqref{eq:5.2.0.4}, we have that 
		\begin{align}
			n_j=\sum_{l=1}^{n_j}-\frac{1}{2\pi\operatorname{i}}\oint_{\Gamma_j}\Rprod{\varphi_{j,l}}{R(z,\Dr)\varphi_{j,l}}\dd z.\label{eq:5.2.0.6}
		\end{align}
		On the other hand, for $1\le l\le n_j$,
		\begin{align}
			\left|\frac{1}{2\pi\operatorname{i}}\oint_{\Gamma_j}\Rprod{\varphi_{j,l}}{R(z,\Dr)\varphi_{j,l}}-\Rprod{\varphi_{j,l}}{R(z,\sLn)\varphi_{j,l}}\dd z\right|&\le\frac{1}{2\pi}\oint_{\Gamma_j}\normf{\varphi_{j,l}}\normf{(R(z,\Dr)-R(z,\sLn)\varphi_{j,l})}\dd z\notag\\
			&=\frac{1}{2\pi}\oint_{\Gamma_j}\normf{R(z,\sLn)(\sLn-\Dr)R(z,\Dr)\varphi_{j,l}}\dd z\notag\\
			&\le\frac{1}{2\pi}\oint_{\Gamma_j}\norm{R(z,\sLn)}\normbf{(\Dr-\sLn)\frac{1}{|z-\mu_j^*|}\varphi_{j,l}}\dd z\notag\\
			&=\frac{\normf{(\Dr-\sLn)\varphi_{j,l}}}{2\pi\vartheta}\oint_{\Gamma_j}\norm{R(z,\sLn)}\dd z\notag\\
			&\le\normf{(\Dr-\sLn)\varphi_{j,l}}\max_{z\in\Gamma_j}\norm{R(z,\sLn)},\notag
		\end{align}
		where in the first step we used Cauchy-Schwarz inequality, in the second step we used \eqref{eq:resolvent}, in the third step we used the fact that $R(z,\Dr)\varphi_{j,l}=\varphi_{j,l}/(\mu_j^*-z)$ and in the fourth step we used \eqref{eq:diskj}. We choose some $\vartheta=\oo(1)$ so that $\normf{(\Dr-\sLn)\varphi_{j,l}}\max_{z\in\Gamma_j}\norm{R(z,\sLn)}=\oo(1)$. This is allowed since by \eqref{eq:5.2.0.1}, with probability $1-\oo(1)$, $\normf{(\Dr-\sLn)\varphi_{j,l}}=\oo(1)$ and it is independent of $\vartheta$, and the second term $\max_{z\in\Gamma_j}\norm{R(z,\sLn)}$ is $\vartheta$-dependent and independent of the first one. 
		Therefore, we have 
		$$\sum_{l=1}^{n_j}-\frac{1}{2\pi\operatorname{i}}\oint_{\Gamma_j}\Rprod{\varphi_{j,l}}{R(z,\sLn)\varphi_{j,l}}\dd z=\sum_{l=1}^{n_j}-\frac{1}{2\pi\operatorname{i}}\oint_{\Gamma_j}\Rprod{\varphi_{j,l}}{R(z,\Dr)\varphi_{j,l}}\dd z+\oo(1)=n_j+\oo(1),$$
		where in the last step we used \eqref{eq:5.2.0.6}.
		Combining with \eqref{eq:5.2.0.5}, we have that there exists some $j^*\ne j$ and $1\le l^*\le n_{j^*}$ such that 
		\begin{align}
			-\frac{1}{2\pi\operatorname{i}}\oint_{\Gamma_j}\Rprod{\varphi_{j^*,l^*}}{R(z,\sLn)\varphi_{j^*,l^*}}\dd z>\kappa,\label{eq:5.2.0.8}
		\end{align}
		for some $\kappa>0$. 
		Likewise, we have 
		\begin{align}
			&\left|\frac{1}{2\pi\operatorname{i}}\oint_{\Gamma_j}\Rprod{\varphi_{j^*,l^*}}{R(z,\Dr)\varphi_{j^*,l^*}}-\Rprod{\varphi_{j^*,l^*}}{R(z,\sLn)\varphi_{j^*,l^*}}\dd z\right|\notag\\
			\le&\frac{1}{2\pi}\oint_{\Gamma_j}\normf{\varphi_{j^*,l^*}}\normf{(R(z,\sLn)-R(z,\Dr))\varphi_{j^*,l^*}}\dd z\notag\\
			=&\frac{1}{2\pi}\oint_{\Gamma_j}\normf{R(z,\sLn)(\Dr-\sLn)R(z,\Dr)\varphi_{j^*,l^*}}\dd z\le\frac{1}{2\pi}\oint_{\Gamma_j}\norm{R(z,\sLn)}\normbf{(\Dr-\sLn)\frac{1}{|z-\mu_{j^*}^*|}\varphi_{j^*,l^*}}\dd z\notag\\
			\le&\frac{\normf{(\Dr-\sLn)\varphi_{j^*,l^*}}}{\pi|\mu_{j^*}^*-\mu_{j}^*|}\oint_{\Gamma_j}\norm{R(z,\sLn)}\dd z\lesssim\normf{(\Dr-\sLn)\varphi_{j^*,l^*}}\vartheta\max_{z\in\Gamma_j}\norm{R(z,\sLn)},\label{eq:5.2.0.11}
		\end{align}
		where in the first step we used Cauchy-Schwarz inequality, in the second step we used \eqref{eq:resolvent} and the last but one step is from the fact that $|z-\mu_{j^*}^*|\ge|\mu_{j^*}^*-\mu_{j}^*|/2$ for all $z\in\Gamma_j$. Similar to the above argument, combining with \eqref{eq:5.2.0.7}, we have with probability $1-\oo(1)$,
		\begin{align*}
			-\frac{1}{2\pi\operatorname{i}}\oint_{\Gamma_j}\Rprod{\varphi_{j^*,l^*}}{R(z,\sLn)\varphi_{j^*,l^*}}=\oo(1),
		\end{align*}
		which contradicts \eqref{eq:5.2.0.8}. Since $ -\frac{1}{2\pi\operatorname{i}}\oint_{\Gamma_j}\operatorname{Tr}(R(z,\sLn))\dd z$ is an integer by Theorem \ref{theo:counting}, it can only be $n_j$ by according to the contradiction argument. Consequently, with probability $1-\oo(1)$, there are exactly $n_j$ eigenvalues of $\sLn$ inside $(\mu_j^*-\vartheta,\mu_j^*+\vartheta)$, for all $j=1,\cdots,M_0$.
		\par\vspace{.5\baselineskip}
		\noindent\textbf{Step three.}\;
		For each $[\mu_j^*+\vartheta,\mu_{j+1}^*-\vartheta]$, we pick a positively oriented contour 
		\begin{align}
			\Gamma_{j,j+1}=\{z\in\C:|z-(\mu_j^*+\mu_{j+1}^*)/2|=(\mu_{j+1}^*-\mu_{j}^*)/2-\vartheta\}.\label{eq:gammajj+1}
		\end{align}
		Since $[\mu_j^*+\vartheta,\mu_{j+1}^*-\vartheta]$ contains no eigenvalues of $\Dr$, we have that from the same procedure in \eqref{eq:5.2.0.7}, for all $j\ge0$ and $1\le l\le n_j$,
		\begin{align}
			-\frac{1}{2\pi\operatorname{i}}\oint_{\Gamma_{j,j+1}}\Rprod{\varphi_{s,l}}{R(z,\Dr)\varphi_{s,l}}=0.\label{eq:5.2.0.9}
		\end{align}
		Here we also prove by contradiction. We assume that there are at least one eigenvalue of $\sLn$ inside $[\mu_j^*+\vartheta,\mu_{j+1}^*-\vartheta]$. By Theorem \ref{theo:counting}, this indicates $-\frac{1}{2\pi\operatorname{i}}\oint_{\Gamma_{j,j+1}}\operatorname{Tr}(R(z,\sLn))\dd z\ge1$. Therefore, there exists some eigenfunction $\varphi^*$ of $\Dr$ associated with $\mu^*$ such that 
		\begin{align}
			-\frac{1}{2\pi\operatorname{i}}\oint_{\Gamma_{j,j+1}}\Rprod{\varphi^*}{R(z,\sLn)\varphi^*}\dd z>\kappa_1,\label{eq:5.2.0.10}
		\end{align}
		for some $\kappa_1>0$. Using similar procedure as in \eqref{eq:5.2.0.11}, we have that 
		\begin{align}
			\left|\frac{1}{2\pi\operatorname{i}}\oint_{\Gamma_{j,j+1}}\Rprod{\varphi^*}{R(z,\Dr)\varphi^*}-\Rprod{\varphi^*}{R(z,\sLn)\varphi^*}\dd z\right|&\le\frac{1}{2\pi}\oint_{\Gamma_{j,j+1}}\norm{R(z,\sLn)}\normbf{(\Dr-\sLn)\frac{1}{|z-\mu^*|}\varphi^*}\dd z\notag\\
			&\le\frac{\normf{(\Dr-\sLn)\varphi^*}}{2\pi\vartheta}\oint_{\Gamma_{j,j+1}}\norm{R(z,\sLn)}\dd z\notag\\
			&\lesssim\normf{(\Dr-\sLn)\varphi^*}\frac{\max_{z\in\Gamma_{j,j+1}}\norm{R(z,\sLn)}}{\vartheta},\notag
		\end{align}
		where in the second step we used \eqref{eq:gammajj+1}.
		Similarly, combining with Proposition \ref{prop:pointwise} and \eqref{eq:5.2.0.9}, we have that with probability $1-\oo(1)$, 
		$$-\frac{1}{2\pi\operatorname{i}}\oint_{\Gamma_{j,j+1}}\Rprod{\varphi^*}{R(z,\sLn)\varphi^*}\dd z=\oo(1),$$
		which contradicts \eqref{eq:5.2.0.10}. Consequently, with probability $1-\oo(1)$, 
		$$-\frac{1}{2\pi\operatorname{i}}\oint_{\Gamma_j}\operatorname{Tr}(R(z,\sLn))\dd z=0,$$
		i.e., there are no eigenvalues of $\sLn$ that lie in $[\mu_j^*+\vartheta,\mu_{j+1}^*-\vartheta]$. Using similar argument to show that no eigenvalues lie in $[\mu_j^*+\vartheta,\mu_{j+1}^*-\vartheta]$ above, we can also have that with probability $1-\oo(1)$, there are no eigenvalues of $\sLn$ that lie in $(\delta,\mu_1^*-\vartheta]$.
		
		Combining all the above arguments, we have that for $l=1,\cdots,K_0$, $\lambda_l(\sLn)\le\delta$, and for $j=1,\cdots,M_0$, and when $K_0+\sum_{s=1}^{j-1} n_s+1\le l\le K_0+\sum_{s=1}^{j} n_s$, with probability $1-\oo(1)$,
		$$|\lambda_l(\sLn)-\mu_j^*|<\vartheta,$$
		where we recall $\sum_{j=1}^{M_0}n_j\ge M$. Since $\vartheta$ is chosen such that $\vartheta=\oo(1)$, we have that $\lambda_l(\sLn)=\mu_j^*+\oop(1)$. Note by Remark \ref{rmk:eigen}, $\mu_1(\Dr)=0$ and $\mu_2(\Dr)>\delta$. Consequently, with probability $1-\oo(1)$, for $k=1,\cdots,M$,
		$$\lambda_{K_0+k}(\sLn)=\mu_{k+1}(\Dr)+\oo(1).$$
		Combining with \eqref{eq:lbsln}, we conclude \eqref{eq:main} and complete our proof.
	\end{proof}
	
	\subsection{Proof of Proposition \ref{prop:pointwise}}\label{sec:prop}
	In this section, we prove Proposition \ref{prop:pointwise}. We first introduce notation for multi-index products. For $\bm{\sigma}=(\sigma_1,\cdots,\sigma_d)^\top\in\R_+^d,\bm{\alpha}=(\alpha_1,\cdots,\alpha_d)^\top\in\N^d,\bm{z}=(z_1,\cdots,z_d)^\top\in\R^d$, denote $|\bm{\alpha}|\coloneq\sum_{i=1}^d\alpha_i$, $\bm{\alpha}!\coloneq\prod_{i=1}^d\alpha_i!$, $\bm{z}^{\bm{\alpha}}\coloneq\prod_{i=1}^dz_i^{\alpha_i}$ and $\mathsf{H}_{\bm{\alpha}}(\xb/\bm{\sigma})\coloneq\prod_{i=1}^d H_{\alpha_i}(x_i/\sigma_i)$, where $H_m(\cdot)$ is the $m$-th physicist Hermite polynomial. For $\bm{s}=(s_1,\ldots,s_d)^\top,\bm{k}=(k_1,\ldots,k_d)^\top\in\N^d$, we write $\bm{s}\le\bm{k}$ if $s_i\le k_i$ for all $i=1,\ldots,d$.
	
	Recall $\mathcal{T}_n$ in \eqref{eq:2.2.5}. The proof of Proposition \ref{prop:pointwise} splits into two parts, the bias part and the variance part, based on the following decomposition
	$$\normf{(\Dr-\sLn)\varphi}\le\normf{(\Dr-\mathcal{T}_n)\varphi}+\normf{(\mathcal{T}_n-\sLn)\varphi},$$
	where the first part can be regarded as a deterministic bias part and the second part can be regarded as a random variance part. For the bias part, we show that $\normf{(\Dr-\mathcal{T}_n)\varphi}=\OO(r_n^2)$, whose proof relies on the key representation (see Lemma \ref{lemma:expansion}) that 
	\begin{align}
		\mathcal{T}_n \varphi(\xb)=\frac{2C_\varphi}{m_2}\sum_{\substack{\bm{0}\le\bm{s}\le\bm{k}\\ \bm{s}\ne\bm{k}}}\sum_{\bm{\alpha}\in\N^d}\frac{(-1)^{|\bm{\alpha}|}}{\bm{\alpha}!2^{|\bm{\alpha}|}}\mathsf{C}_{\bm{k},\bm{s},\bm{\alpha}}\bm{\sigma}^{-\bm{\gamma}}r_n^{|\bm{\gamma}|-2}\Lambda_{\bm{\gamma}}(r_n;\bm{\sigma},g),\label{eq:5.3.0.1}
	\end{align}
	where $\bm{\gamma}=\bm{k}-\bm{s}+\bm{\alpha}$, $C_\varphi$ is some constant and 
	\begin{align}
		\Lambda_{\bm{\gamma}}(r_n;\bm{\sigma},g)&=\int_{\normp{\zb}\le1}g(\normp{\zb})\exp\left(-\sum_{i=1}^d\frac{r_n^2z_i^2}{4\sigma_i^2}\right)\bm{z}^{\bm{\gamma}}\dd\zb,\label{eq:5.3.0.2}\\
		\mathsf{C}_{\bm{k},\bm{s},\bm{\alpha}}&=-\mathsf{H}_{\bm{s}}(\xb/\bm{\sigma})\mathsf{H}_{\bm{\alpha}}(\xb/\bm{\sigma})\prod_{i=1}^d\binom{k_i}{s_i}2^{k_i-s_i}.\label{eq:5.3.0.3}
	\end{align}
	Such a representation allows us to control $\normf{(\mathcal{T}_n-\Dr)\varphi}$ by analyzing the summation in \eqref{eq:5.3.0.1} term by term.
	
	For the variance part, we will show that $\normf{(\mathcal{T}_n-\sLn)\varphi}=\oop(1)$. In the actual proof, we will introduce an intermediate quantity $\widetilde{\mathcal{T}}_n$ as follows,
	\begin{align}
		\widetilde{\mathcal{T}}_n \varphi(\xb)\coloneq\frac{2m_0}{m_2r_n^2}\frac{\int_{\R^d}\bm{1}(\normp{\xb-\yb}\le r_n)g(\normp{\xb-\yb}/r_n)(\varphi(\xb)-\varphi(\yb))\varrho(\yb)\dd \yb}{\int_{\R^d}\bm{1}(\normp{\xb-\yb}\le r_n)g(\normp{\xb-\yb}/r_n)\varrho(\yb)\dd \yb},\label{eq:defttn}
	\end{align}
	which is a finer approximation of $\mathcal{T}_n$. It suffices to control $\normf{(\widetilde{\mathcal{T}}_n-\mathcal{T}_n)\varphi}$ and $\normf{(\sL_n-\widetilde{\mathcal{T}}_n)\varphi}$ using a truncation argument.
	\begin{proof}
		Recall from Theorem \ref{theo:eigen},
		\begin{align}
			\varphi(\xb)=C_\varphi\prod_{i=1}^d H_{k_i}(x_i/\sigma_i),\quad k_1,\cdots,k_d\in\N,\label{eq:varphi}
		\end{align}
		where $C_{\varphi}$ is some constant and $\sigma_1,\cdots,\sigma_d$ are as in \eqref{eq:2.1}.
		\par\vspace{.5\baselineskip}
		\noindent{\bf{Control of the bias part.}}\; 
		Recall \eqref{eq:5.3.0.1}, \eqref{eq:5.3.0.2} and \eqref{eq:5.3.0.3}. By Lemma \ref{lemma:expansion}, $\mathcal{T}_n\varphi(\xb)$ can be represented as 
		\begin{align}
			\mathcal{T}_n \varphi(\xb)=\frac{2C_\varphi}{m_2}\sum_{\substack{\bm{0}\le\bm{s}\le\bm{k}\\ \bm{s}\ne\bm{k}}}\sum_{\bm{\alpha}\in\N^d}\frac{(-1)^{|\bm{\alpha}|}}{\bm{\alpha}!2^{|\bm{\alpha}|}}\mathsf{C}_{\bm{k},\bm{s},\bm{\alpha}}\bm{\sigma}^{-\bm{\gamma}}r_n^{|\bm{\gamma}|-2}\Lambda_{\bm{\gamma}}(r_n;\bm{\sigma},g),\notag
		\end{align}
		where $\bm{\gamma}=\bm{k}-\bm{s}+\bm{\alpha}$. We point out that by symmetry, for \eqref{eq:5.3.0.2}, $\Lambda_{\bm{\gamma}}(r_n;\bm{\sigma},g)=0$ whenever there exists some odd $\gamma_i$, $i=1,\cdots,d$, and that for all $\bm{\gamma}\in\N^d$,
		\begin{align}
			|\Lambda_{\bm{\gamma}}(r_n;\bm{\sigma},g)|\le\int_{\normp{\zb}\le1}g(\normp{\zb})\dd\zb\lesssim m_0,\label{eq:5.3.0.4}
		\end{align}
		where in the first step we used the fact that $|z_i|\le\normp{\zb}\le 1$ for all $i=1,\cdots,d$ and in the second step we used \eqref{eq:mj} and our assumption on $g(\cdot)$ in Theorem \ref{theo:normald=1}. 
		Moreover, observe that when $|\bm{\gamma}|\ge4$, $r_n^{|\bm{\gamma}|-2}\Lambda_{\bm{\gamma}}(r_n;\bm{\sigma},g)=\OO(r_n^2)$. Since $|\bm{\gamma}|>0$ and all terms with odd $|\bm{\gamma}|$ yield zero, we can decompose $\mathcal{T}_n=\mathsf{M}+\mathsf{R}$, where the main part $\mathsf{M}$ collects all terms with $|\bm{\gamma}|=2$ and the remainder part $\mathsf{R}$ collects all terms with $|\bm{\gamma}|\ge4$, i.e.,
		\begin{align}
			\mathsf{M}&\coloneq \frac{2C_\varphi}{m_2}\sum_{\substack{\bm{0}\le\bm{s}\le\bm{k},\bm{\alpha}\in\N^d\\ \bm{s}\ne\bm{k},|\bm{\gamma}|=2}}\frac{(-1)^{|\bm{\alpha}|}}{\bm{\alpha}!2^{|\bm{\alpha}|}}\mathsf{C}_{\bm{k},\bm{s},\bm{\alpha}}\bm{\sigma}^{-\bm{\gamma}}r_n^{|\bm{\gamma}|-2}\Lambda_{\bm{\gamma}}(r_n;\bm{\sigma},g),\label{eq:M}\\
			\mathsf{R}&\coloneq \frac{2C_\varphi}{m_2}\sum_{\substack{\bm{0}\le\bm{s}\le\bm{k},\bm{\alpha}\in\N^d\\ \bm{s}\ne\bm{k},|\bm{\gamma}|\ge4}}\frac{(-1)^{|\bm{\alpha}|}}{\bm{\alpha}!2^{|\bm{\alpha}|}}\mathsf{C}_{\bm{k},\bm{s},\bm{\alpha}}\bm{\sigma}^{-\bm{\gamma}}r_n^{|\bm{\gamma}|-2}\Lambda_{\bm{\gamma}}(r_n;\bm{\sigma},g).\label{eq:R}
		\end{align}
		For the main part $\mathsf{M}$, $\Lambda_{\bm{\gamma}}(r_n;\bm{\sigma},g)\ne0$ only when there is a unique index $i$ such that $\gamma_i=2$ and $\gamma_j=0$ for all $j\ne i$. In that case,
		\begin{align}
			\Lambda_{\bm{\gamma}}(r_n;\bm{\sigma},g)&=\int_{\normp{\zb}\le1}g(\normp{\zb})\exp\left(-\sum_{i=1}^d\frac{r_n^2z_i^2}{4\sigma_i^2}\right)z_i^2\dd\zb\notag\\
			&=\int_{\normp{\zb}\le1}g(\normp{\zb})(1+\OO(r_n^2))z_i^2\dd\zb=m_2(1+\OO(r_n^2)),\label{eq:5.3.0.5}
		\end{align}
		where in the second step we used the fact that $\exp(-t)=1+\OO(t)$ uniformly for $0\le t\le 1$, and in the last step we used \eqref{eq:mj}.
		The condition $\gamma_i=2$ arises either from $s_i=k_i-1,\alpha_i=1$ or from $s_i=k_i-2,\alpha_i=0$. Then we proceed to discuss these two cases. When $s_i=k_i-1,\alpha_i=1$,
		\begin{align}
			\mathsf{C}_{\bm{k},\bm{s},\bm{\alpha}}=-2x_i\left(\prod_{j\ne i}H_{k_j}(x_j/\sigma_j)\right)2k_iH_{k_i-1}(x_i/\sigma_i)/\sigma_i=-2x_i\frac{\partial \mathsf{H}_{\bm{k}}(\xb/\bm{\sigma})}{\partial x_i},\label{eq:5.3.0.6}
		\end{align}
		where in the first step we used the fact that $H_0\equiv1,H_1(x)=2x$ and in the second step we used the identity \eqref{eq:C3.2.1}. When $s_i=k_i-2,\alpha_i=0$,
		\begin{align}
			\mathsf{C}_{\bm{k},\bm{s},\bm{\alpha}}=-H_{k_i-2}(x_i/\sigma_i)\left(\prod_{j\ne i}H_{k_j}(x_j/\sigma_j)\right)2k_i(k_i-1)=-\frac{\sigma_i^2}{2}\frac{\partial^2 \mathsf{H}_{\bm{k}}(\xb/\bm{\sigma})}{\partial x_i^2},\label{eq:5.3.0.7}
		\end{align}
		where in the last step we applied \eqref{eq:C3.2.1} twice. Recall $\mathsf{M}$ in \eqref{eq:M}. Combining \eqref{eq:5.3.0.5}, \eqref{eq:5.3.0.6} and \eqref{eq:5.3.0.7}, we have that 
		\begin{align}
			|\mathsf{M}-\Dr\varphi(\xb)|&=\left|\frac{2C_\varphi}{m_2}\sum_{i=1}^d \left(\frac{1}{2}2x_i\frac{\partial \mathsf{H}_{\bm{k}}(\xb/\bm{\sigma})}{\partial x_i}\sigma_i^{-2}m_2(1+\OO(r_n^2))-\frac{\sigma_i^2}{2}\frac{\partial^2 \mathsf{H}_{\bm{k}}(\xb/\bm{\sigma})}{\partial x_i^2}\sigma_i^{-2}m_2(1+\OO(r_n^2))\right)-\Dr\varphi(\xb)\right|\notag\\
			&=\OO(r_n^2)\widetilde{\varphi}(\xb),\label{eq:5.3.0.8}
		\end{align}
		where in the last step we used \eqref{eq:4.2} and 
		\begin{align}
			\widetilde{\varphi}(\xb)=C_\varphi\sum_{i=1}^d\left(\left|\frac{\partial^2\mathsf{H}_{\bm{k}}}{\partial x_i^2}\right|+\left|\frac{2x_i}{\sigma_i^2}\frac{\partial \mathsf{H}_{\bm{k}}}{\partial x_i}\right|\right).\notag
		\end{align}
		We point out that $\widetilde{\varphi}$ is the summation of the absolute values of $2d$ polynomials. Therefore, by the triangle inequality, $\normf{\widetilde{\varphi}}<\infty$.
		
		We now control the remainder $\mathsf{R}$ with $|\bm{\gamma}|\ge4$. Recall \eqref{eq:R}. We have that
		\begin{align}
			|\mathsf{R}|&\lesssim r_n^2\sum_{\bm{0}\le\bm{s}\le\bm{k}}\sum_{\bm{\alpha}\in\N^d}\frac{1}{\bm{\alpha}!2^{|\bm{\alpha}|}}|\mathsf{C}_{\bm{k},\bm{s},\bm{\alpha}}|\bm{\sigma}^{-\bm{\gamma}}\notag\\
			&\le r_n^2\prod_{i=1}^{d}\left\{\sum_{s_i=0}^{k_i}\sum_{\alpha_i=0}^\infty\frac{1}{\alpha_i!2^{\alpha_i}}\left|H_{s_i}(x_i/\sigma_i)H_{\alpha_i}(x_i/\sigma_i)\right|\binom{k_i}{s_i}2^{k_i-s_i}\sigma_i^{-(k_i-s_i+\alpha_i)}\right\},\notag
		\end{align}
		where in the first step we used \eqref{eq:5.3.0.4} and in the second step we used the factorization structure of \eqref{eq:5.3.0.2} and \eqref{eq:5.3.0.3}. Recall $\normf{\cdot}$ in \eqref{eq:fnorm} and $\norm{\cdot}_{\cF_i}$ in \eqref{eq:4.9.1}.
		Then we have 
		\begin{align}
			\normf{\mathsf{R}}&\lesssim r_n^2\prod_{i=1}^d\normbb{\sum_{s_i=0}^{k_i}\sum_{\alpha_i=0}^\infty\frac{1}{\alpha_i!2^{\alpha_i}}\left|H_{s_i}(x_i/\sigma_i)H_{\alpha_i}(x_i/\sigma_i)\right|\binom{k_i}{s_i}2^{k_i-s_i}\sigma_i^{-(k_i-s_i+\alpha_i)}}_{\mathcal{F}_i}\notag\\
			&\le r_n^2\prod_{i=1}^d\left\{\sum_{s_i=0}^{k_i}\sum_{\alpha_i=0}^\infty\frac{1}{\alpha_i!2^{\alpha_i}}\binom{k_i}{s_i}2^{k_i-s_i}\sigma_i^{-(k_i-s_i+\alpha_i)}\normb{H_{s_i}(x_i/\sigma_i)H_{\alpha_i}(x_i/\sigma_i)}_{\mathcal{F}_i}\right\}\notag\\
			&\lesssim r_n^2\prod_{i=1}^d\left\{\sum_{s_i=0}^{k_i}\sum_{\alpha_i=0}^\infty\frac{1}{\alpha_i!2^{\alpha_i}}\sqrt{(s_i+1)!\alpha_i!}(2\sqrt{2})^{s_i+\alpha_i}\binom{k_i}{s_i}2^{k_i-s_i}\sigma_i^{-\alpha_i}\right\}\notag\\
			&\le r_n^2\prod_{i=1}^d\left\{\sum_{s_i=0}^{k_i}\binom{k_i}{s_i}2^{k_i-s_i}(2\sqrt{2})^{s_i}\sqrt{(s_i+1)!}\sum_{\alpha_i=0}^\infty\frac{(\sqrt{2}/\sigma_i)^{\alpha_i}}{\sqrt{\alpha_i!}}\right\}=\OO(r_n^2),\label{eq:5.3.0.9}
		\end{align}
		where in the second step we used the triangle inequality, in the third step we used Lemma \ref{lemma:hermitenielsen} and the fact that $\sigma_i^{-(k_i-s_i)}$ is bounded and in the last step we used the fact that by ratio test,
		$\sum_{\alpha_i=0}^\infty(\sqrt{2}/\sigma_i)^{\alpha_i}/\sqrt{\alpha_i!}<\infty$,
		and that $k_1,\cdots,k_d$ are fixed integers.
		
		Combining \eqref{eq:5.3.0.8} and \eqref{eq:5.3.0.9}, we have that 
		\begin{align}
			\normf{(\mathcal{T}_n-\Dr)\varphi}&=\normf{\mathsf{M}-\Dr\varphi+\mathsf{R}}\le\normf{\mathsf{M}-\Dr\varphi}+\normf{\mathsf{R}}\le\OO(r_n^2)\normf{\widetilde{\varphi}}+\OO(r_n^2)=\OO(r_n^2),\label{eq:5.3.0.10}
		\end{align}
		which completes our proof of the bias part.
		\par\vspace{.5\baselineskip}
		\noindent{\bf{Control of the variance part.}}\; Set the bulk region
		\begin{align}
			B_1\coloneq\prod_{i=1}^d\left[-\sigma_i\sqrt{2(1+\beta)\log n},\sigma_i\sqrt{2(1+\beta)\log n}\right],\label{eq:bulk}
		\end{align}
		and denote $\bm{1}_{B_1}(\xb)=\bm{1}(\xb\in B_1)$, where $\beta>0$ is any fixed small constant. Recall \eqref{eq:defttn}.
		By the triangle inequality, it suffices to show $\normf{(\sLn-\widetilde{\mathcal{T}}_n)\varphi}=\oop(1)$ and $\normf{(\mathcal{T}_n-\widetilde{\mathcal{T}}_n)\varphi}=\oo(1)$. We start with the proof of the first term. Recall $\mathsf{s}(\cdot)$ and $g^*(\cdot)$ as in \eqref{eq:2.2.1}.
		For $j=1,\cdots,n$, set
		\begin{align}
			w_j&\coloneq\mathsf{s}\left(\alpha_n(r_n^2-\normp{\xb-\xb_j}^2)\right)g^*(\normp{\xb-\xb_j}/r_n)(\varphi(\xb)-\varphi(\xb_j)),\label{eq:wj}\\
			v_j&\coloneq\mathsf{s}\left(\alpha_n(r_n^2-\normp{\xb-\xb_j}^2)\right)g^*(\normp{\xb-\xb_j}/r_n).\label{eq:vj}
		\end{align}
		For convenience of notation, denote 
		\begin{align}
			N_1\coloneq\frac{1}{n}\sum_{j=1}^n w_j&,\quad N_2\coloneq\int_{\normp{\xb-\yb}\le r_n}g(\normp{\xb-\yb}/r_n)(\varphi(\xb)-\varphi(\yb))\varrho(\yb)\dd \yb,\label{eq:N1N2}\\
			D_1\coloneq\frac{1}{n}\sum_{j=1}^n v_j&,\quad D_2\coloneq\int_{\normp{\xb-\yb}\le r_n}g(\normp{\xb-\yb}/r_n)\varrho(\yb)\dd \yb,\label{eq:D1D2}
		\end{align}
		and
		\begin{align}
			\norm{\xb}_{\Sigma}\coloneq&\left(\sum_{i=1}^d x_i^2/(2\sigma_i^2)\right)^{1/2}.\label{eq:sigmanorm}
		\end{align}
		Our argument relies on the following decomposition
		\begin{align}
			\frac{m_2r_n^2}{2m_0}\norm{(\sLn-\widetilde{\mathcal{T}}_n)\varphi}_{\cF}\le&\normbb{\left(\frac{N_1}{D_1}-\frac{N_1}{D_2}\right)\bm{1}_{B_1}}_{\cF}+\normbb{\left(\frac{N_1}{D_1}-\frac{N_1}{D_2}\right)\bm{1}_{B_1^\complement}}_{\cF}+\notag\\
			&\normbb{\left(\frac{N_1}{D_2}-\frac{N_2}{D_2}\right)\bm{1}_{B_1}}_{\cF}+\normbb{\left(\frac{N_1}{D_2}-\frac{N_2}{D_2}\right)\bm{1}_{B_1^\complement}}_{\cF}\notag\\
			\coloneq&\mathsf{E}_1+\mathsf{E}_2+\mathsf{E}_3+\mathsf{E}_4.\label{eq:decomp}
		\end{align}
		In what follows, we will show that $\mathsf{E}_i=\oop(r_n^2)$, $1\le i\le 4$.
		\begin{enumerate}
			\item {\bf{Control of $\mathsf{E}_4$.}}\; By the triangle inequality,
			$$\normb{\mathsf{E}_4}_{\cF}\le\normb{\left(N_2/D_2\right)\bm{1}_{B_1^\complement}}_{\cF}+\normb{\left(N_1/D_2\right)\bm{1}_{B_1^\complement}}_{\cF}.$$
			Recall $\widetilde{\mathcal{T}}_n$ in \eqref{eq:defttn}, $N_2$ in \eqref{eq:N1N2} and $D_2$ in \eqref{eq:D1D2}. For the first term,
			\begin{align}
				\normb{(N_2/D_2)\bm{1}_{B_1^\complement}}_{\cF}\asymp r_n^2\normf{\widetilde{\mathcal{T}}_n \varphi\bm{1}_{B_1^\complement}}\le r_n^2(\normf{\mathcal{T}_n \varphi\bm{1}_{B_1^\complement}}+\normf{(\mathcal{T}_n-\widetilde{\mathcal{T}}_n)\varphi})=\oo(r_n^2),\label{eq:5.3.2.1}
			\end{align}
			where in the last step we used Lemma \ref{lemma:ttn} and \eqref{eq:bulk}.
			Next, we control the second term. Denote 
			\begin{align}
				i_0\equiv i_0(\xb,\{\sigma_i\}_{i=1}^d)\coloneq\arg\max_i\{|x_i/\sigma_i|\}.\label{eq:i0}
			\end{align}
			We can see that whenever $\xb\notin B_1$, $|x_{i_0}|\ge\sigma_{i_0}\sqrt{2(1+\beta)\log n}$. Then with probability $1-\oo(1)$, for all $j=1,\cdots,n$ and $\xb\notin B_1$,
			\begin{align}
				\normp{\xb-\xb_j}\ge|x_{i_0}|(1-1/\sqrt{1+\beta})\ge\frac{(1-1/\sqrt{1+\beta})}{\sqrt{d}}\left(\sum_{i=1}^d \frac{\sigma_{i_0}^2}{\sigma_i^2}x_{i}^2\right)^{1/2}\ge \zeta\norm{\xb}_{\Sigma},\label{eq:5.3.2.2}
			\end{align}
			where in the first step we used Lemma \ref{lemma:xj}, in the second step we used \eqref{eq:i0} and $\zeta>0$ is some fixed constant independent of $\xb$. Therefore, for sufficiently large $n$, with probability $1-\oo(1)$, 
			\begin{align}
				\mathsf{s}\left(\alpha_n(r_n^2-\normp{\xb-\xb_j}^2)\right)&=\frac{1}{1+\exp(-\alpha_n(r_n^2-\normp{\xb-\xb_j}^2))}\le\frac{1}{1+\exp(\alpha_n(\zeta\norm{\xb}_{\Sigma}^2-r_n^2))}\notag\\
				&\le\exp(-\alpha_n(\zeta\norm{\xb}_{\Sigma}^2-r_n^2))\le\exp(-\alpha_n\zeta\norm{\xb}_{\Sigma}^2/2),\label{eq:5.3.2.3}
			\end{align}
			where in the second step we used \eqref{eq:5.3.2.2} and the last step is from the fact that for sufficiently large $n$, $r_n^2\le\zeta\norm{\xb}_{\Sigma}^2/2$. 
			Moreover, recalling \eqref{eq:varphi} and \eqref{eq:i0}, for $\xb\notin B_1$, we have that with probability $1-\oo(1)$,
			\begin{align}
				|\varphi(\xb)|,|\varphi(\xb_j)|\lesssim\prod_{i=1}^d|x_{i_0}/\sigma_{i_0}|^{k_i}\lesssim\prod_{i=1}^d\norm{\xb}_{\Sigma}^{k_i}=\OO(\norm{\xb}_{\Sigma}^{|\bm{k}|}).\label{eq:5.3.2.9}
			\end{align}
			Recall $N_1$ in \eqref{eq:N1N2} and that $g^*(\cdot)$ is bounded. Then we have that
			\begin{align}
				N_1&=\frac{1}{n}\sum_{j=1}^n\mathsf{s}(\alpha_n(r_n^2-\normp{\xb-\xb_j}^2))g^*(\normp{\xb-\xb_j}/r_n)(\varphi(\xb)-\varphi(\xb_j))\notag\\
				&\lesssim\frac{1}{n}\sum_{j=1}^n(|\varphi(\xb)|+|\varphi(\xb_j)|)\exp\left(-\frac{\alpha_n\zeta}{2}\norm{\xb}_{\Sigma}^2\right)\notag\\
				&\lesssim\norm{\xb}_{\Sigma}^{|\bm{k}|}\exp\left(-2\norm{\xb}_{\Sigma}^2\right)\exp\left(-\frac{\alpha_n\zeta}{4}\norm{\xb}_{\Sigma}^2\right)\le\norm{\xb}_{\Sigma}^{|\bm{k}|}\varrho^2(\xb)\exp\left(-\frac{\alpha_n\zeta}{8}|x_{i_0}/\sigma_{i_0}|^2\right)\notag\\
				&\le\norm{\xb}_{\Sigma}^{|\bm{k}|}\varrho^2(\xb)\exp(-\alpha_n\zeta \log n/4)\le\norm{\xb}_{\Sigma}^{|\bm{k}|}\varrho^2(\xb)n^{-5},\label{eq:5.3.2.5}
			\end{align}
			where in the second step we used \eqref{eq:5.3.2.3}, in the third step we used \eqref{eq:5.3.2.9} and in the last but one step we used that $|x_{i_0}/\sigma_{i_0}|>\sqrt{2(1+\beta)\log n}$.
			On the other hand, for $\xb\notin B_1$ and $\normp{\xb-\yb}\le r_n$, we have that 
			\begin{align}
				\norm{\yb}_{\Sigma}\le\norm{\xb}_{\Sigma}+\norm{\xb-\yb}_{\Sigma}=\norm{\xb}_{\Sigma}+\OO(r_n)\le\sqrt{2}\norm{\xb}_{\Sigma}.\label{eq:5.3.2.6}
			\end{align}
			Recall $D_2$ in \eqref{eq:D1D2}. \eqref{eq:5.3.2.6} indicates that 
			\begin{align}
				D_2=\int_{\normp{\xb-\yb}\le r_n}g(\normp{\xb-\yb}/r_n)\exp(-\norm{\yb}_\Sigma^2)\dd\yb\gtrsim r_n^d\exp(-2\norm{\xb}_{\Sigma}^2).\label{eq:5.3.2.7}
			\end{align}
			Therefore, with probability $1-\oo(1)$,
			\begin{align}
				\normbb{\left(\frac{N_1}{D_2}\right)\bm{1}_{B_1^\complement}}_{\cF}^2\lesssim&\int_{B_1^\complement}\left(\frac{\norm{\xb}_\Sigma^{|\bm{k}|}\varrho^2(\xb)n^{-5}}{r_n^d \varrho^2(\xb)}\right)^2\varrho^2(\xb)\dd\xb= n^{-10}r_n^{-2d}\int_{B_1^\complement}\norm{\xb}_\Sigma^{2|\bm{k}|}\varrho^2(\xb)\dd\xb\notag\\
				\le&n^{-10}r_n^{-2d}\int_{\R^d}\norm{\xb}_\Sigma^{2|\bm{k}|}\varrho^2(\xb)\dd\xb=\oo(r_n^4),\label{eq:5.3.2.8}
			\end{align}
			where in the first step we used \eqref{eq:5.3.2.5} and \eqref{eq:5.3.2.7}, and in the last step we used \eqref{eq:2.2} and the fact that $\int_{\R^d}\norm{\xb}_\Sigma^{2|\bm{k}|}\varrho^2(\xb)\dd\xb<\infty$.
			Combining \eqref{eq:5.3.2.1} and \eqref{eq:5.3.2.8}, we conclude that $\mathsf{E}_4=\oop(r_n^2)$.
			
			\item {\bf{Control of $\mathsf{E}_2$.}}\; Recall $N_1$ in \eqref{eq:N1N2} and $D_1$ in \eqref{eq:D1D2}. By \eqref{eq:5.3.2.9}, for $\xb\notin B_1$,
			\begin{align}
				\left|N_1/D_1\right|\le|\varphi(\xb)|+\max_{1\le j\le n}|\varphi(\xb_j)|=\OOp(\norm{\xb}_{\Sigma}^{|\bm{k}|}).\label{eq:5.3.2.10}
			\end{align}
			Therefore, with probability $1-\oo(1)$,
			\begin{align}
				\normbb{\left(\frac{N_1}{D_1}\right)\bm{1}_{B_1^\complement}}_{\cF}^2&\lesssim\int_{B_1^\complement}\norm{\xb}_\Sigma^{2|\bm{k}|}\varrho^2(\xb)\dd\xb\lesssim\exp(-x_{i_0}^2/(2\sigma_{i_0}^2))\int_{B_1^\complement}\norm{\xb}_\Sigma^{2|\bm{k}|}\varrho(\xb)\dd\xb\notag\\
				&\le\OO(n^{-1-\beta})=\oo(r_n^4),\label{eq:5.3.2.11}
			\end{align}
			where in the first step we used \eqref{eq:5.3.2.10}, in the second step we used the construction of $B_1$ in \eqref{eq:bulk} and the fact that $\varrho(\xb)\lesssim\exp(-x_{i_0}^2/(2\sigma_{i_0}^2))$ with $i_0$ in \eqref{eq:i0}, in the third step we used the fact that $\int_{\R^d}\norm{\xb}_\Sigma^{2|\bm{k}|}\varrho(\xb)\dd\xb<\infty$ and in the last step we used \eqref{eq:2.2}. Combining \eqref{eq:5.3.2.8} and \eqref{eq:5.3.2.11}, we conclude that $\mathsf{E}_2=\oop(r_n^2)$.
			
			\item {\bf{Control of $\mathsf{E}_3$.}}\; Recall $w_j$ in \eqref{eq:wj}.
			We have that
			\begin{align}
				\E \norm{\mathsf{E}_3}_{\cF}^2=&\E\int_{\xb\in B_1}\left(\frac{1}{n}\sum_{j=1}^{n}w_j-N_2\right)^2\frac{1}{D_2^2}\varrho^2(\xb)\dd\xb\notag\\
				=&\OO(r_n^{-2d})\E\int_{\xb\in B_1}\left(\frac{1}{n}\sum_{j=1}^{n}w_j-N_2\right)^2\dd\xb=\OO(r_n^{-2d})\int_{\xb\in B_1}\E\left(\frac{1}{n}\sum_{j=1}^{n}w_j-N_2\right)^2\dd\xb\notag\\
				=&\OO(r_n^{-2d})\int_{\xb\in B_1}\frac{1}{n}(\E w_j^2-N_2^2)+\frac{n-1}{n}\E w_j(\E w_j-N_2)+\frac{n+1}{n}N_2(N_2-\E w_j)\dd\xb\notag\\
				\le&\OO(r_n^{-2d})\int_{\xb\in B_1}\frac{1}{n}(\E w_j^2-N_2^2)+2|\E w_j-N_2+N_2||\E w_j-N_2|+2|N_2||N_2-\E w_j|\dd\xb\notag\\
				\le&\OO(r_n^{-2d})\int_{\xb\in B_1}\frac{1}{n}(\E w_j^2+N_2^2)+2|\E w_j-N_2|^2+4|N_2||N_2-\E w_j|\dd\xb\notag\\
				=&\OO(r_n^{-2d})\int_{\xb\in B_1}\frac{1}{n}\left(\varrho^2(\xb)\OO(r_n^{2d+2}\log^{|\bm{k}|-1}n)+\varrho(\xb)\OO(r_n^{d+2})\log^{(|\bm{k}|-1)/2}n\right)\label{eq:5.3.2.12}\\
				&+\varrho^2(\xb)\OO(r_n^{2d-2}\log^{|\bm{k}|-1}n)/\alpha_n^2+\varrho^2(\xb)\OO(r_n^{2d}\log^{|\bm{k}|-1}n)/\alpha_n\dd\xb\notag\\
				\lesssim&\frac{\log^{|\bm{k}|-1}n}{nr_n^{d-2}}\int_{\R^d}\varrho(\xb)\dd\xb=\oo(r_n^4),\notag
			\end{align}
			where in the second step we used Lemma \ref{lemma:interror}, in the third step we used Fubini's theorem, in \eqref{eq:5.3.2.12} we used \eqref{eq:B.4.0.1}, \eqref{eq:B.4.0.2} and \eqref{eq:B.4.0.3}, and in the last two steps we used \eqref{eq:2.2} and the assumption that $\alpha_n\gg n^{7/2}$. By Markov's inequality, we have that 
			\begin{align*}
				&\P\left(\normf{\mathsf{E}_3}\ge \left(\E \norm{\mathsf{E}_3}_{\cF}^2\right)^{1/4}r_n\right)\le \frac{\E \norm{\mathsf{E}_3}_{\cF}^2}{\left(\E \norm{\mathsf{E}_3}_{\cF}^2\right)^{1/2}r_n^2}=\frac{\left(\E \norm{\mathsf{E}_3}_{\cF}^2\right)^{1/2}}{r_n^2}=\oo(1).
			\end{align*}
			Since $\left(\E \norm{\mathsf{E}_3}_{\cF}^2\right)^{1/4}r_n=\oo(r_n^2)$, this concludes that $\mathsf{E}_3=\oop(r_n^2)$.
			
			\item {\bf{Control of $\mathsf{E}_1$.}}\;
			Recall $N_1$ in \eqref{eq:N1N2}, $D_1,D_2$ in \eqref{eq:D1D2}, $B_1$ in \eqref{eq:bulk} and $v_j$ in \eqref{eq:vj}. For all $\xb\in B_1$,
			$$\varphi(\xb)\lesssim\prod_{i=1}^{d}(x_i/\sigma_i)^{k_i}\lesssim \log^{|\bm{k}|/2} n.$$
			Combining with Lemma \ref{lemma:xj}, we have that with probability $1-\oo(1)$, for all $\xb\in B_1$
			\begin{align*}
				\left|N_1/D_1\right|\le|\varphi(\xb)|+\max_{j}|\varphi(\xb_j)|=\OO(\log^{|\bm{k}|/2} n).
			\end{align*}
			Therefore, with probability $1-\oo(1)$,
			\begin{align}
				\normb{\mathsf{E}_1}_{\cF}=\normbb{\frac{N_1}{D_1}\left(\frac{D_1-D_2}{D_2}\right)\bm{1}_{B_1}}_{\cF}=\OO(\log^{|\bm{k}|/2} n)\normbb{\left(\frac{D_1-D_2}{D_2}\right)\bm{1}_{B_1}}_{\cF}.\label{eq:5.3.2.13}
			\end{align}
			Then, we have that
			\begin{align}
				&\E\normbf{\left(\frac{D_1-D_2}{D_2}\right)\bm{1}_{B_1}}^2=\E\int_{\xb\in B_1}\left(\frac{1}{n}\sum_{i=1}^n v_j-D_2\right)^2\frac{\varrho^2(\xb)}{D_2^2}\dd\xb\notag\\
				\lesssim&r_n^{-2d}\E\int_{\xb\in B_1}\left(\frac{1}{n}\sum_{i=1}^n v_j-D_2\right)^2\dd\xb=r_n^{-2d}\int_{\xb\in B_1}\E\left(\frac{1}{n}\sum_{i=1}^n v_j-D_2\right)^2\dd\xb\notag\\
				\le&r_n^{-2d}\int_{\xb\in B_1}\frac{1}{n}(\E v_j^2+D_2^2)+2|\E v_j-D_2|^2+4|D_2||D_2-\E v_j|\dd\xb\notag\\
				=&r_n^{-2d}\int_{\xb\in B_1}\frac{1}{n}\left(\varrho(\xb)\OO(r_n^d)+\varrho(\xb)\OO(r_n^d)\right)+\varrho^2(\xb)\OO(r_n^{2d-4}/\alpha_n^2)+\varrho^2(\xb)\OO(r_n^{2d-2}/\alpha_n^2)\dd\xb\notag\\
				\asymp&\frac{1}{nr_n^d}=\oo\left(\frac{r_n^4}{\log^{|\bm{k}|}n}\right),\notag
			\end{align}
			where in the second step we used Lemma \ref{lemma:interror}, in the third step we used Fubini's theorem, in the fifth step we used \eqref{eq:B.4.0.4}, \eqref{eq:B.4.0.5} and Lemma \ref{lemma:interror} and in the last two steps we used \eqref{eq:2.2} and the assumption that $\alpha_n\gg n^{7/2}$. Similarly, by Markov's inequality, we have that,
			\begin{align*}
				\normbb{\left(\frac{D_1-D_2}{D_2}\right)\bm{1}_{B_1}}_{\cF}=\oop\left(\frac{r_n^2}{\log^{|\bm{k}|/2}n}\right).
			\end{align*}
			Combining with \eqref{eq:5.3.2.13}, we conclude that $\mathsf{E}_1=\oop(r_n^2)$. 
		\end{enumerate}
		
		Combining all above discussion, we have shown that $\normf{(\sLn-\widetilde{\mathcal{T}}_n)\varphi}=\oop(1)$. The control of $\normf{(\mathcal{T}_n-\widetilde{\mathcal{T}}_n)\varphi}$ enjoys a similar argument and we put its proof to Lemma \ref{lemma:ttn}, which shows that $\normf{(\mathcal{T}_n-\sLn)\varphi}=\oop(1)$. This completes our proof of the variance part.
	\end{proof}

	\appendix 
	\section{Some technical background}\label{appendix:bgs}
	\subsection{Second order linear differential equation}\label{appendix:pde}
	In this section, we summarize some useful results regarding the second order linear differential equation which allows us to study the weighted Laplace-Beltrami operator in \eqref{eq:2.1.1}. The results are standard in the literature, for example see \cite{NagyODE,strauss2007partial,walter2013ordinary}.
	
	Consider the following eigenvalue problem for second order linear partial differential equations:
	\begin{align}
		\sum_{i=1}^d\frac{\partial^2f}{\partial x_i^2}-\sum_{j=1}^d \frac{2x_i}{\sigma_i^2}\frac{\partial f}{\partial x_i}+\lambda f=0,\label{eq:A2.1.1}
	\end{align}
	where $f:\R^d\rightarrow\R$ belongs to the Hilbert space $\cF$ defined in \eqref{eq:2.1.2} and the parameters $\sigma_i>0$ are fixed constants. Recall the Hilbert space $\cF_i$ as in \eqref{eq:4.4} with corresponding inner product $\Rprod{\cdot}{\cdot}_i$ in \eqref{eq:4.9} for $i=1,\cdots,d$. We start looking for separable solutions of the form: 
	\begin{align}
		f(\xb)=\prod_{i=1}^d \mathsf{f}_i(x_i),\quad \xb=(x_1,\cdots,x_d)^\top,\label{eq:A2.1.2}
	\end{align}
	where $\mathsf{f}_i$ satisfies
	\begin{align}
		\mathsf{f}_i''(x_i)-\frac{2x_i}{\sigma_i^2}\mathsf{f}_i'(x_i)+\lambda_i\mathsf{f}_i(x_i)=0,\quad \mathsf{f}_i\in\cF_i,\quad \mathsf{f}_i(x_i)\not\equiv0,\quad i=1,\cdots,d,\label{eq:A2.1.3}
	\end{align}
	for some $\lambda_1,\cdots,\lambda_d$ such that $\sum_{i=1}^d \lambda_i=\lambda$. 
	It is straightforward to verify that products of solutions to \eqref{eq:A2.1.3} yield a set of solutions to \eqref{eq:A2.1.1} of the form of \eqref{eq:A2.1.2}. 
	
	To analyze \eqref{eq:A2.1.3}, consider homogeneous ordinary differential equations of the form:
	\begin{align}
		\mathsf{u}''(t)+b_1t\mathsf{u}'(t)+b_0\mathsf{u}(t)=0,\quad b_1,b_0\in\R.\label{eq:A2.2}
	\end{align}
	The following theorem shows that solutions to \eqref{eq:A2.2} can be written into general form. Recall that two functions $\mathsf{u}_1(t)$ and $\mathsf{u}_2(t)$ are linearly independent on $\R$ if the following holds: $c_1 \mathsf{u}_1(t)+c_2 \mathsf{u}_2(t)=0$ for all $t\in\R$ if and only if $c_1=c_2=0$.
	
	\begin{theorem}
		\label{theo:ode2}
		If $\mathsf{u}_1,\mathsf{u}_2\in C^2(\R)$ are both solutions to \eqref{eq:A2.2} and are linearly independent on $\R$, then for any solution $\mathsf{u}(t)$ to \eqref{eq:A2.2}, there exist unique constants $c_1,c_2\in\R$ so that
		\begin{align*}
			\mathsf{u}(t)=c_1\mathsf{u}_1(t)+c_2\mathsf{u}_2(t).
		\end{align*}
	\end{theorem}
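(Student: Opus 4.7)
The plan is to prove this standard two-dimensionality result for the solution space of \eqref{eq:A2.2} via the Wronskian together with the existence-uniqueness theorem for linear second-order ODEs. Since the coefficients of \eqref{eq:A2.2} are continuous on all of $\R$, classical ODE theory guarantees that for any $t_0\in\R$ and any prescribed values $\mathsf{u}(t_0),\mathsf{u}'(t_0)$, there exists a unique $C^2$-solution on $\R$. I will use this uniqueness statement as the one external fact.

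First, I would introduce the Wronskian $W(t)=\mathsf{u}_1(t)\mathsf{u}_2'(t)-\mathsf{u}_1'(t)\mathsf{u}_2(t)$. Differentiating and substituting \eqref{eq:A2.2} for $\mathsf{u}_1''$ and $\mathsf{u}_2''$ gives $W'(t)=-b_1 t\,W(t)$, so by Abel's identity $W(t)=W(t_0)\exp\!\bigl(-b_1(t^2-t_0^2)/2\bigr)$. Consequently, $W$ either vanishes identically or is nowhere zero on $\R$. The next step is to show $W\not\equiv 0$ under the linear-independence hypothesis: if $W(t_0)=0$ at some (hence every) point, the linear system for $(c_1,c_2)$ with matrix rows $(\mathsf{u}_1(t_0),\mathsf{u}_2(t_0))$ and $(\mathsf{u}_1'(t_0),\mathsf{u}_2'(t_0))$ admits a nontrivial solution, giving a nontrivial $(c_1,c_2)$ such that $\mathsf{v}\coloneq c_1\mathsf{u}_1+c_2\mathsf{u}_2$ satisfies $\mathsf{v}(t_0)=\mathsf{v}'(t_0)=0$; by the uniqueness part of the ODE theorem applied to the zero solution, $\mathsf{v}\equiv 0$ on $\R$, contradicting linear independence.

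Now, given any solution $\mathsf{u}$ to \eqref{eq:A2.2}, fix any $t_0\in\R$ and consider the $2\times 2$ linear system
\begin{equation*}
\begin{pmatrix} \mathsf{u}_1(t_0) & \mathsf{u}_2(t_0) \\ \mathsf{u}_1'(t_0) & \mathsf{u}_2'(t_0) \end{pmatrix}\begin{pmatrix} c_1 \\ c_2 \end{pmatrix}=\begin{pmatrix} \mathsf{u}(t_0) \\ \mathsf{u}'(t_0)\end{pmatrix}.
\end{equation*}
Since its determinant equals $W(t_0)\neq 0$, there exists a unique pair $(c_1,c_2)\in\R^2$ solving it. Define $\mathsf{v}(t)\coloneq c_1\mathsf{u}_1(t)+c_2\mathsf{u}_2(t)$. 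By linearity of \eqref{eq:A2.2}, $\mathsf{v}$ is a solution agreeing with $\mathsf{u}$ in both value and derivative at $t_0$. Invoking uniqueness of the initial value problem once more yields $\mathsf{u}\equiv\mathsf{v}$ on $\R$, which is the required representation. Uniqueness of $(c_1,c_2)$ follows from the invertibility of the above matrix.

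The argument is standard and there is no real obstacle; the only conceptual point requiring care is justifying that linear independence on $\R$ (a global statement) forces nonvanishing of the Wronskian at every point, which is exactly where the ODE uniqueness theorem enters and cannot be avoided by purely algebraic manipulation of $\mathsf{u}_1,\mathsf{u}_2$.
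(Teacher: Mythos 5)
Your proof is correct and complete; it is the standard Wronskian/Abel's-identity argument combined with the uniqueness theorem for initial value problems. The paper itself does not supply a proof of Theorem~\ref{theo:ode2} but simply cites Chapter~2 of \cite{NagyODE}, which contains precisely this argument, so your proposal matches what the paper is implicitly invoking.
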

	\begin{proof}
		See Chapter 2 in \cite{NagyODE}.
	\end{proof}
	
	\begin{remark}
		Theorem \ref{theo:ode2} allows us to construct all possible solutions from a fundamental set of two linearly independent solutions. This implies that to solve \eqref{eq:A2.2}, it suffices to impose initial conditions that generate two linearly independent solutions.
	\end{remark}
	
	Classic Sturm–Liouville theory (e.g. \cite[Chapter 27]{walter2013ordinary}) implies that solutions to \eqref{eq:A2.1.3} form a complete orthonormal basis of $(\cF_i,\Rprod{\cdot}{\cdot}_i)$. 
	Denote the tensor product
	\begin{align*}
		\widetilde{\cF}\coloneq\cF_1\otimes\cdots\otimes\cF_d=\operatorname{span}\left\{\prod_{i=1}^d \mathsf{f}_i(x_i)\Biggm|\mathsf{f}_i\in\cF_i,i=1,\cdots d\right\}.
	\end{align*}
	Now we define an inner product on $\widetilde{\cF}$:
	$$\Rprod{f_1}{f_2}\coloneq\prod_{i=1}^d\Rprod{\mathsf{f_i}}{\mathsf{g}_i}_i,$$
	where $f_1,f_2\in\widetilde{\cF}$, $f_1(\xb)=\prod_{i=1}^d\mathsf{f}_i(x_i)$, $f_2(\xb)=\prod_{i=1}^d\mathsf{g}_i(x_i)$ and $\mathsf{f}_i,\mathsf{g}_i\in\cF_i$ for $i=1,\cdots,d$. It is straightforward to verify that $\cF$ is the completion of $\widetilde{\cF}$ with respect to $\Rprod{\cdot}{\cdot}$. Therefore, products of solutions to \eqref{eq:A2.1.3} constitute a complete orthonormal basis of $(\cF,\Rprod{\cdot}{\cdot})$. Consequently, solving \eqref{eq:A2.1.1} reduces to solving the family of problems \eqref{eq:A2.1.3} respectively.

	\subsection{Confluent hypergeometric function}\label{appendix:chf}
	\begin{definition}(\cite[Chapter 13]{olver2010nist})
		\label{definiton:chf}
		For $a,b\in\R$ and $b\notin\{0,-1,-2,\cdots\}$, the confluent hypergeometric function of the first kind is defined by the power series in $z\in\C$:
		\begin{align}
			_1F_1(a,b,z)=\sum_{j=0}^{\infty}\frac{a^{(j)}z^j}{b^{(j)}j!},\label{eq:A2.3}
		\end{align}
		where $a^{(0)}=b^{(0)}=1$ and for $j\ge1$,
		$$a^{(j)}=a(a+1)\cdots(a+j-1),\quad b^{(j)}=b(b+1)\cdots(b+j-1).$$
	\end{definition}
	\begin{remark}
		\label{remark:chf1}
		Note that for $\mathsf{f}_{i,1}(x_i)$ and $\mathsf{f}_{i,2}(x_i)$ in \eqref{eq:4.6} and \eqref{eq:4.7}, we can rewrite them into confluent hypergeometric functions of the first kind in terms of \eqref{eq:A2.3} as follows:
		$$\mathsf{f}_{i,1}(x_i)={}_1F_1\left(-\frac{\lambda_i \sigma_i^2}{4},\frac{1}{2},\frac{x_i^2}{\sigma_i^2}\right),\quad f_2(x)={}_1F_1\left(\frac{2-\lambda_i \sigma_i^2}{4},\frac{3}{2},\frac{x_i^2}{\sigma_i^2}\right)x_i.$$
	\end{remark}
	
	In the following proposition, we introduce an important special case of Definition \ref{definiton:chf} when the first parameter $a$ is a non-positive integer. In this case, \eqref{eq:A2.3} becomes a polynomial with finite degree.
	
	\begin{proposition}
		\label{prop:hermite}
		When $a=-k$ for $k\in\{0,1,2,\cdots\}$, $_1F_1(a,b,z)$ becomes a polynomial in $z$ of degree $k$ in the sense that
		$$_1F_1(a,b,z)=\sum_{j=0}^{k}\frac{a^{(j)}z^j}{b^{(j)}j!}.$$
	\end{proposition}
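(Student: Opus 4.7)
The plan is to prove this directly from the series definition in \eqref{eq:A2.3} by analyzing the Pochhammer symbol $a^{(j)}$ when $a = -k$. The key observation is that the rising factorial $a^{(j)} = a(a+1)(a+2) \cdots (a+j-1)$ contains exactly $j$ consecutive factors starting at $a$. When $a = -k$ for some non-negative integer $k$, the factors are $-k, -k+1, \ldots, -k + j - 1$.

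First, I would consider the case $j \leq k$. Here the factors $-k, -k+1, \ldots, -k+j-1$ are all strictly negative (since $-k + j - 1 \leq -1$), hence all nonzero, so $a^{(j)} \neq 0$. Combined with the hypothesis $b \notin \{0, -1, -2, \ldots\}$, which guarantees $b^{(j)} \neq 0$ for every $j \geq 0$, the terms for $j = 0, 1, \ldots, k$ are well-defined and nonzero in general.

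Next, for $j \geq k+1$, the product $a^{(j)} = (-k)(-k+1)\cdots(-k+j-1)$ necessarily contains the factor $0$ (it appears precisely when the index is $j = k+1$ via the term $-k + k$, and persists in all longer products). Hence $a^{(j)} = 0$ for every $j \geq k+1$, so every such term in \eqref{eq:A2.3} vanishes identically. Truncating the series at $j = k$ then yields the stated polynomial expression of degree at most $k$, and the degree is exactly $k$ because the leading coefficient $a^{(k)}/(b^{(k)} k!)$ equals $(-1)^k k! /(b^{(k)} k!) = (-1)^k / b^{(k)} \neq 0$.

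There is no real obstacle here; the result is essentially a termination-of-series identity for the Kummer function, and the only item requiring mild care is confirming that $b^{(j)}$ stays away from zero throughout the truncated range, which is immediate from the standing assumption on $b$.
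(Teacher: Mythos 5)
Your proof is correct and rests on the same key fact the paper uses, namely that $a^{(j)}=0$ for all $j>k$ when $a=-k$, which truncates the series. The paper states this in one line; you additionally verify that $b^{(j)}\neq 0$ on the truncated range and that the leading coefficient is nonzero, which are fine but routine elaborations of the same argument.
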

	\begin{proof}
		The proof follows from the simple fact that when $a=-k$, we have $a^{(j)}=0$ for $j>k$. 
	\end{proof}
	
	With Proposition \ref{prop:hermite}, we can obtain the asymptotic properties for confluent hypergeometric functions of the first kind when $a=-k$ for $k\in\{0,1,2,\cdots\}$. Recall that for $z\in\C$, $\operatorname{ph}(z)$ is denoted as the phase of $z$, i.e.,
	$$\operatorname{ph}(z)=\arctan\left(\frac{\operatorname{Im}(z)}{\operatorname{Re}(z)}\right).$$
	
	\begin{lemma}
		\label{lemma:chfkarge}
		If $a\notin\{0,-1,-2,\cdots\}$, for some arbitrary small positive constant $\delta$, when $|\operatorname{ph}(z)|\le\frac{1}{2}\pi-\delta$,
		\begin{align*}
			{}_1F_1(a,b,z)\sim \frac{1}{\Gamma(a)}e^z z^{a-b},\ |z|\rightarrow\infty.
		\end{align*}
	\end{lemma}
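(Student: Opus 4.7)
The plan is to combine Kummer's integral representation with Watson's lemma, followed by analytic continuation in the parameters. I would first establish the Kummer representation
$${}_1F_1(a,b,z) = \frac{\Gamma(b)}{\Gamma(a)\Gamma(b-a)}\int_0^1 e^{zt}\,t^{a-1}(1-t)^{b-a-1}\,\dd t,$$
valid initially for $\operatorname{Re}(b)>\operatorname{Re}(a)>0$. This follows by expanding $e^{zt}$ into its Taylor series, interchanging sum and integral by absolute convergence on $[0,1]$, and recognizing the resulting Beta-type integrals as exactly the coefficients appearing in Definition \ref{definiton:chf}.

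Next, I would change variables $t=1-s$ to localize the Laplace-type weight at the dominant endpoint $s=0$, obtaining
$${}_1F_1(a,b,z) = \frac{\Gamma(b)}{\Gamma(a)\Gamma(b-a)}\,e^z\int_0^1 e^{-zs}(1-s)^{a-1}s^{b-a-1}\,\dd s.$$
The hypothesis $|\operatorname{ph}(z)|\le\pi/2-\delta$ ensures $\operatorname{Re}(z)\ge|z|\sin\delta>0$, so $e^{-zs}$ concentrates the mass of the integrand in an $O(|z|^{-1})$ neighborhood of $s=0$ as $|z|\to\infty$. Expanding $(1-s)^{a-1}=1+O(s)$ near $s=0$, extending the upper limit to $\infty$ with exponentially small error, and invoking Watson's lemma gives
$$\int_0^\infty e^{-zs}\,s^{b-a-1}\,\dd s = \Gamma(b-a)\,z^{a-b}\bigl(1+O(|z|^{-1})\bigr),$$
from which ${}_1F_1(a,b,z)\sim (\Gamma(b)/\Gamma(a))\,e^z z^{a-b}$. (The factor $\Gamma(b)$ appears to be omitted from the stated formula by a typographical slip, but it is a harmless fixed constant in the applications of Remark \ref{remark:chf1}, where $b\in\{1/2,3/2\}$, and in fact $\Gamma(1/2)$ is precisely the prefactor used there.) Finally, to drop the temporary restriction $\operatorname{Re}(b)>\operatorname{Re}(a)>0$, I would invoke analytic continuation in $(a,b)$: both sides of the asymptotic are meromorphic in these parameters with poles only at non-positive integer values, so the relation extends to all $a,b\notin\{0,-1,-2,\ldots\}$, which contains the assumed regime.

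The main obstacle is handling the complex sector condition uniformly in $\operatorname{ph}(z)$. The contour $[0,1]$ in the representation is real while $z$ ranges over an open sector, so Watson's lemma must be applied with a contour rotation: one rotates the $s$-contour by $-\operatorname{ph}(z)$, which is permitted by Cauchy's theorem since $s^{b-a-1}$ has its only branch point at $s=0$ and the exponential $e^{-zs}$ decays along the rotated ray throughout the admissible sector. After rotation, the integral reduces to a real Laplace integral in $|z|$ and the standard one-dimensional asymptotic applies, yielding error bounds uniform over the closed sector $|\operatorname{ph}(z)|\le\pi/2-\delta$. The remaining subtlety is tracking the uniformity of the $O(|z|^{-1})$ remainder in $\operatorname{ph}(z)$ and in the parameters, but since the lemma only asserts an ``$\sim$'' statement with no explicit rate, this reduces to routine bookkeeping rather than genuine analytic difficulty.
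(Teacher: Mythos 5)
The paper's ``proof'' is just a citation to Chapter~13 of the NIST Handbook, so you have supplied an actual argument where the authors supplied none. Your route---Kummer's Eulerian integral, change of variables to localize at $s=0$, contour rotation to align the $e^{-zs}$ kernel with a real ray, then Watson's lemma---is the textbook derivation of this Poincar\'e expansion, and it is essentially correct. You also correctly caught that the lemma as printed is missing a factor of $\Gamma(b)$: the true leading term is $\bigl(\Gamma(b)/\Gamma(a)\bigr)e^z z^{a-b}$, and indeed in the proof of Theorem~\ref{theo:eigen} the authors silently use the prefactor $\Gamma(1/2)/\Gamma(-\lambda_i\sigma_i^2/4)$, i.e. $\Gamma(b)/\Gamma(a)$ with $b=1/2$, confirming that the displayed form in Lemma~\ref{lemma:chfkarge} is a typographical slip rather than a different normalization.

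One step deserves more care than you give it: the passage from $\operatorname{Re}(b)>\operatorname{Re}(a)>0$ to general $a\notin\{0,-1,-2,\ldots\}$ by ``analytic continuation of the asymptotic.'' An asymptotic equivalence $f(z)\sim g(z)$ as $|z|\to\infty$ is not a holomorphic identity in $(a,b)$, so you cannot continue it the way you would an exact formula; you would need to continue the \emph{error term} with uniform bounds, or (more simply) switch to a Mellin--Barnes contour-integral representation of ${}_1F_1$, which is valid for all admissible $a,b$ without the sign restrictions and yields the same asymptotics by shifting the contour. Alternatively, for the specific use in this paper the restriction is moot: Remark~\ref{remark:chf1} applies the lemma only with $b\in\{1/2,3/2\}$ and with $a=-\lambda_i\sigma_i^2/4$ (resp.\ $(2-\lambda_i\sigma_i^2)/4$) precisely in the regime where those quantities are \emph{not} non-positive integers, and the authors only need the qualitative conclusion that ${}_1F_1$ grows like $e^z$ times a power---enough to rule out membership in $\cF_i$---so the one-parameter strip where Kummer's integral converges already covers the cases that matter after a symmetry argument in $a\mapsto b-a$ via Kummer's transformation ${}_1F_1(a,b,z)=e^z{}_1F_1(b-a,b,-z)$. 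Flagging either of these fixes would make the continuation step rigorous rather than asserted.
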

	\begin{proof}
		See Chapter 13 in \cite{olver2010nist}.
	\end{proof}
	
	\begin{remark}
		\label{remark:chf2}
		By Remark \ref{remark:chf1} and the fact that $\operatorname{ph}(x_i^2)=0$ for $x_i\in\R$, the asymptotic properties for $\mathsf{f}_{i,1}(x_i)$ in \eqref{eq:4.6} when $\lambda_{i}\sigma_i^2\notin\{0,4,8,\cdots\}$ and $\mathsf{f}_{i,2}(x_i)$ in \eqref{eq:4.7} when $\lambda_{i}\sigma_i^2\notin\{2,6,10,\cdots\}$ can be studied using Lemma \ref{lemma:chfkarge}.
	\end{remark}
	
	\subsection{Preliminary results from complex analysis}
	In this section, we summarize some useful results from complex analysis that will be used in the proof of Theorem \ref{theo:normald=1}. Most of the results can be found in \cite{ahlfors1979complex,kato2013perturbation}. We begin by recalling some important definitions \cite{ahlfors1979complex}. A function $f$ is holomorphic on a domain $D$ if it is complex‐differentiable at every point of $D$. A domain $D$ is simply connected if its complement in the extended complex plane is connected. 
	\begin{theorem}
		\label{them:residue}
		Let $f(z)$ be holomorphic on a simply connected domain $D$ except for possibly finitely many isolated poles, and let $\Gamma\subset D$ be a positively oriented, piecewise‐smooth, simple closed contour avoiding those poles. If $\{a_1,\cdots,a_m\}\subset\operatorname{Int}(\Gamma)$ are the poles of $f$ enclosed by $\Gamma$, then
		$$\frac{1}{2\pi\mathrm{i}}\int_\Gamma f(z)\dd z=\sum_{k=1}^m\operatorname{Res}(f,a_k),$$
		where $\operatorname{Res}(f,a_k)$ is the residue of $f$ at the singularity $a_k$.
		In particular, if $f(z)$ is holomorphic on $D$ (i.e., $m=0$), then
		$\int_{\Gamma} f(z)\,\mathrm{d}z = 0.$
	\end{theorem}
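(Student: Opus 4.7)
The plan is to localize the integration around each pole and reduce the global problem to computing individual residues. Concretely, the strategy has two ingredients: a homological deformation that replaces $\Gamma$ by a disjoint union of small circles, one around each pole, and an evaluation of each small-circle integral via the Laurent series of $f$.

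First, I would select pairwise disjoint closed disks $\overline{B(a_k,\epsilon_k)}\subset\operatorname{Int}(\Gamma)$, which is possible because each $a_k$ is an isolated singularity and there are only finitely many of them. Let $C_k$ denote the counterclockwise-oriented boundary of $B(a_k,\epsilon_k)$. On the open region $\Omega\coloneq\operatorname{Int}(\Gamma)\setminus\bigcup_{k=1}^m \overline{B(a_k,\epsilon_k)}$, the function $f$ is holomorphic. Introducing $m$ pairwise disjoint piecewise-smooth cuts inside $\Omega$, each joining $\Gamma$ to a distinct $C_k$, partitions $\Omega$ into finitely many simply connected pieces. On each piece Cauchy--Goursat yields a vanishing contour integral; summing these, the cut contributions cancel pairwise (each cut is traversed twice with opposite orientations), leaving
$$\int_\Gamma f(z)\dd z = \sum_{k=1}^m \int_{C_k} f(z)\dd z.$$

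Second, for each $k$ I would expand $f$ in its Laurent series $f(z)=\sum_{n=-\infty}^\infty c_n^{(k)}(z-a_k)^n$ on a punctured neighborhood of $a_k$ containing $C_k$. Uniform convergence on $C_k$ permits termwise integration, and parameterizing $z=a_k+\epsilon_k e^{\mathrm{i}\theta}$, $\theta\in[0,2\pi]$, shows that $\int_{C_k}(z-a_k)^n\dd z$ equals $2\pi\mathrm{i}$ for $n=-1$ and vanishes otherwise. Hence $\int_{C_k} f(z)\dd z = 2\pi\mathrm{i}\,c_{-1}^{(k)} = 2\pi\mathrm{i}\,\operatorname{Res}(f,a_k)$. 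Combining this with the previous display yields the stated identity. The special case $m=0$ reduces directly to Cauchy--Goursat applied to the simply connected subregion $\operatorname{Int}(\Gamma)\subset D$.

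The only real technical obstacle is making the cut construction rigorous for a general piecewise-smooth simple closed contour $\Gamma$, which amounts to a careful application of the Jordan curve theorem and elementary plane topology to guarantee that the cuts can be chosen inside $D$, pairwise disjoint, and touching $\Gamma$ and the $C_k$'s only at their endpoints. A cleaner alternative, avoiding explicit cuts, is to invoke the general form of Cauchy's theorem for cycles (see \cite{ahlfors1979complex}) applied to the cycle $\Gamma-\sum_{k=1}^m C_k$, which is null-homologous in $D\setminus\{a_1,\ldots,a_m\}$ since every point outside this set lies either in the exterior of $\Gamma$ or in some $B(a_k,\epsilon_k)$ where $C_k$ cancels $\Gamma$; the contour integral of the holomorphic $f$ over a null-homologous cycle vanishes, and evaluating each $\int_{C_k}$ as above finishes the proof.
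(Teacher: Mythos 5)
Your proposal is correct. The paper does not give a proof at all: Theorem~\ref{them:residue} is the classical residue theorem and the paper's ``proof'' is just the citation ``See Chapter 4 in \cite{ahlfors1979complex}.'' Your argument---excising small disjoint disks around the poles, reducing $\int_\Gamma f$ to the sum $\sum_k \int_{C_k} f$ either by cuts and Cauchy--Goursat or (more cleanly) by the homology form of Cauchy's theorem applied to the null-homologous cycle $\Gamma - \sum_k C_k$, and then extracting $2\pi\ri\,c_{-1}^{(k)}$ from termwise integration of the Laurent series---is exactly the proof given in Ahlfors Chapter~4, so you have in effect reproduced the cited source. One small caveat worth stating explicitly: the cut construction requires $\operatorname{Int}(\Gamma)\subset D$, which follows from $D$ being simply connected, $\Gamma\subset D$, and the Jordan curve theorem; you allude to this but it is the one place an examiner would ask you to be precise. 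The cycle/homology version sidesteps that entirely, which is why Ahlfors (and you, in the second half) prefer it.
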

	\begin{proof}
		See Chapter 4 in \cite{ahlfors1979complex}. 
	\end{proof}
	
	\begin{remark}
		According to Theorem \ref{them:residue}, a contour integral can reduce to a sum of residues at associated poles. In our application, those poles will be the eigenvalues of the concerned operators. Consequently, this theorem enables us to study the spectral properties via complex integration.
	\end{remark}
	
	The following theorem provides a method for counting the eigenvalues of a closed linear operator using a contour integral. Recall that for a trace-class closed linear operator $\mathcal{L}$ on a Hilbert space with an orthonormal basis $\{e_j\}_{j=1}^{\infty}$, its trace is defined as:
	\begin{align}
		\operatorname{Tr}(\mathcal{L})=\sum_{j=1}^\infty\Rprod{e_j}{\mathcal{L}e_j},\label{eq:trace}
	\end{align}
	where $\Rprod{\cdot}{\cdot}$ is the equipped inner product of the Hilbert space.
	\begin{theorem}
		\label{theo:counting}
		Let $\mathcal{K}$ be a closed linear operator on a Hilbert space and $\Gamma$ be a positively oriented, piecewise‐smooth, simple closed contour lying entirely in the resolvent set of $\mathcal{K}$. Suppose the resolvent $R(z,\mathcal{K})=(\mathcal{K}-z\mathcal{I})^{-1}$ is of trace class for all $z\in\Gamma$. Further, suppose $\operatorname{spec}(\mathcal{K})\cap\operatorname{Int}(\Gamma)=\{\lambda_j\}_{j=1}^k$, where $\operatorname{spec}(\mathcal{K})$ is the spectrum of $\mathcal{K}$ and $\lambda_j$'s are distinct and each with multiplicity $n_j$. Then
		\begin{align}
			-\frac{1}{2\pi\mathrm{i}}\oint_\Gamma \operatorname{Tr}(R(z,\mathcal{K}))\dd z=\sum_{j=1}^{k}n_j.\notag
		\end{align}
	\end{theorem}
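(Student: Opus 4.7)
The plan is to invoke the Riesz projection machinery from spectral theory. For each isolated eigenvalue $\lambda_j$, pick a small positively oriented, piecewise-smooth, simple closed contour $\Gamma_j \subset \operatorname{Int}(\Gamma)$ enclosing $\lambda_j$ but no other point of $\operatorname{spec}(\mathcal{K})$, with the $\Gamma_j$'s mutually disjoint, and define the Riesz projection
$$P_j \deq -\frac{1}{2\pi\mathrm{i}}\oint_{\Gamma_j} R(z,\mathcal{K})\,\dd z.$$
Since $R(\cdot,\mathcal{K})$ is holomorphic as an operator-valued map on the resolvent set and the region $\operatorname{Int}(\Gamma)\setminus\bigcup_j \overline{\operatorname{Int}(\Gamma_j)}$ lies entirely in the resolvent set, Cauchy's theorem for Banach-space-valued integrals yields
$$-\frac{1}{2\pi\mathrm{i}}\oint_\Gamma R(z,\mathcal{K})\,\dd z \;=\; \sum_{j=1}^k P_j.$$

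Next, I would take the trace on both sides. The resolvent identity $R(z,\mathcal{K}) = R(z_0,\mathcal{K}) + (z-z_0)R(z,\mathcal{K})R(z_0,\mathcal{K})$ propagates the trace-class property from $\Gamma$ into the whole resolvent component and shows that $z\mapsto R(z,\mathcal{K})$ is continuous in the trace-class norm along $\Gamma$. Since the trace is a bounded linear functional on $\mathcal{S}_1$, it commutes with the Bochner integral, giving
$$-\frac{1}{2\pi\mathrm{i}}\oint_\Gamma \operatorname{Tr}(R(z,\mathcal{K}))\,\dd z \;=\; \operatorname{Tr}\!\left(-\frac{1}{2\pi\mathrm{i}}\oint_\Gamma R(z,\mathcal{K})\,\dd z\right) \;=\; \sum_{j=1}^k \operatorname{Tr}(P_j).$$
Finally, the Laurent expansion of $R(z,\mathcal{K})$ around the isolated singularity $\lambda_j$ shows that $P_j$ is a bounded idempotent whose range is the (finite-dimensional) generalized eigenspace of $\lambda_j$, so $\operatorname{rank}(P_j)=n_j$; since the trace of a projection onto a finite-dimensional subspace equals its dimension, $\operatorname{Tr}(P_j)=n_j$, and summing over $j$ gives the claim.

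The main technical hurdle is the commutation of trace and contour integral; the remaining ingredients (Cauchy-type deformation of operator-valued contours and the identification of the Riesz projection rank with the algebraic multiplicity) are classical and may be quoted from, e.g., Kato's \emph{Perturbation Theory for Linear Operators}. Concretely, one has to (i) verify trace-norm continuity of $z\mapsto R(z,\mathcal{K})$ along the compact contour $\Gamma$, which follows from the trace-class assumption together with the resolvent identity, and then (ii) apply the general principle that a bounded linear functional on a Banach space passes under Bochner integrals of continuous paths. A minor secondary point is that ``multiplicity'' here must be interpreted as the algebraic multiplicity (which is what the Riesz projection captures); in the intended application where $\mathcal{K}\in\{\Dr,\sLn\}$, both operators are similar to self-adjoint ones, so algebraic and geometric multiplicities coincide and there is no loss of applicability.
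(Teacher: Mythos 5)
Your proof is correct and follows the standard Riesz-projection argument that the paper invokes by citing Chapter~3 of Kato's \emph{Perturbation Theory for Linear Operators}: contour deformation to small circles $\Gamma_j$ around each isolated eigenvalue, propagation of the trace-class property and trace-norm continuity of $z\mapsto R(z,\mathcal{K})$ via the resolvent identity so that $\operatorname{Tr}$ commutes with the Bochner integral, and $\operatorname{Tr}(P_j)=\operatorname{rank}(P_j)=n_j$. Your closing remark that ``multiplicity'' must be read as algebraic multiplicity, and that this causes no loss in the paper's application since $\Dr$ and $\sLn$ are similar to self-adjoint operators, is a correct and worthwhile clarification of a point the paper's terse citation leaves implicit.
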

	\begin{proof}
		See Chapter 3 in \cite{kato2013perturbation}.
	\end{proof}
	\begin{remark}
		Theorem \ref{theo:counting} allows us to compare the numbers of eigenvalues of $\Delta$ and $\sLn$ in \eqref{eq:2.2.3} within the same given region. By establishing that they have the same numbers of eigenvalues in some given regions, we can construct a one-to-one correspondence between their edge eigenvalues, which is a crucial step in showing the convergence of the edge eigenvalues of $\sLn$ to those of $\Delta$.
	\end{remark}
	
	\section{Proof of additional lemmas}
	Recall $\Kb$ in \eqref{eq_originalK}, $\Ka$ in \eqref{eq:2.2.1}, $\Lb$ in \eqref{eq:2.1.2} and $\Lc$ in \eqref{eq:2.2.2}. The following lemma shows the closeness between $\Lb$ and $\Lc$ and was used in the proof of \eqref{eq:main} in Section \ref{sec:mainproof}. Recall $\alpha_n$ in \eqref{eq:2.2.1}.
	\begin{lemma}
		\label{lemma:S2L}
		Under the assumptions of Theorem \ref{theo:normald=1}, if $\alpha_n\gg n^{7/2}$, then, $\norm{\Lb-\Lc}=\oop(1)$.
	\end{lemma}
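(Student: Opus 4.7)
The plan is to reduce the claim to controlling
$$\bigl\|\mathbf{L}-\widetilde{\mathbf{L}}\bigr\|=\frac{2m_{0}}{m_{2}r_{n}^{2}}\bigl\|\mathbf{D}^{-1}\mathbf{K}-\widetilde{\mathbf{D}}^{-1}\widetilde{\mathbf{K}}\bigr\|=o_{\mathbb{P}}(1),$$
via the standard splitting
$$\widetilde{\mathbf{D}}^{-1}\widetilde{\mathbf{K}}-\mathbf{D}^{-1}\mathbf{K}=\mathbf{D}^{-1}\mathbf{E}+\bigl(\widetilde{\mathbf{D}}^{-1}-\mathbf{D}^{-1}\bigr)\widetilde{\mathbf{K}},\qquad \mathbf{E}\coloneq\widetilde{\mathbf{K}}-\mathbf{K}.$$
Since the diagonal entries of $\mathbf{D}$ and $\widetilde{\mathbf{D}}$ are bounded below by $g(0)-o(1)>0$ (the diagonal of $\widetilde{\mathbf{K}}$ is $\mathrm{s}(\alpha_{n}r_{n}^{2})g^{*}(0)\to g^{*}(0)$ since $\alpha_{n}r_{n}^{2}\gg1$), both $\mathbf{D}^{-1}$ and $\widetilde{\mathbf{D}}^{-1}$ have operator norm $O(1)$. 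It therefore suffices to control the row- and column-sums of $\mathbf{E}$ and of $(\widetilde{\mathbf{D}}^{-1}-\mathbf{D}^{-1})\widetilde{\mathbf{K}}$, and then interpolate via $\|A\|\le\sqrt{\|A\|_{1}\|A\|_{\infty}}$.

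The pointwise estimate $|\mathrm{s}(t)-\mathbf{1}_{[0,\infty)}(t)|\le e^{-|t|}$ (this is the content of Lemma \ref{lemma:sig2ind}), combined with $g\equiv g^{*}$ on $[0,1]$ and the boundedness of $g^{*}$, yields
$$|\mathbf{E}(i,j)|\lesssim\exp\!\Bigl(-\alpha_{n}\bigl|\|\mathbf{x}_{i}-\mathbf{x}_{j}\|_{p}^{2}-r_{n}^{2}\bigr|\Bigr).$$
Define the shell width $\delta_{n}\coloneq 3\log n/(\alpha_{n}r_{n})$. Outside the shell $\{|\|\mathbf{x}_{i}-\mathbf{x}_{j}\|_{p}-r_{n}|\le\delta_{n}\}$ the exponent exceeds $\alpha_{n}r_{n}\delta_{n}=3\log n$, so $|\mathbf{E}(i,j)|\le n^{-3}$; inside the shell it is only $O(1)$. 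Because the density of $\mathbf{x}_{i}-\mathbf{x}_{j}$ is uniformly bounded, the probability that a given pair lies in the shell is $\lesssim r_{n}^{d-1}\delta_{n}=r_{n}^{d-2}\log n/\alpha_{n}$, and the expected total number of shell pairs is $\lesssim n^{2}r_{n}^{d-2}\log n/\alpha_{n}$. Under $\alpha_{n}\gg n^{7/2}$ and the lower bound $r_{n}\gg n^{-1/(d+4)+\epsilon}$ from \eqref{eq:2.2}, this expectation is $o(1)$ for every $d\ge1$, so Markov's inequality yields that with probability $1-o(1)$ there are no shell pairs at all. Consequently
$$\max_{i}\sum_{j}|\mathbf{E}(i,j)|\le n\cdot n^{-3}=n^{-2},\qquad \max_{j}\sum_{i}|\mathbf{E}(i,j)|\le n^{-2},$$
with probability $1-o(1)$.

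These estimates give $\|\mathbf{E}\|\le\sqrt{\|\mathbf{E}\|_{1}\|\mathbf{E}\|_{\infty}}\lesssim n^{-2}$, hence $\|\mathbf{D}^{-1}\mathbf{E}\|\lesssim n^{-2}$. For the second summand, writing $\Delta_{i}\coloneq 1/\widetilde{\mathbf{D}}(i,i)-1/\mathbf{D}(i,i)$ we have $|\Delta_{i}|\lesssim|\widetilde{\mathbf{D}}(i,i)-\mathbf{D}(i,i)|\le\sum_{j}|\mathbf{E}(i,j)|\lesssim n^{-2}$ uniformly in $i$, so
$$\bigl\|(\widetilde{\mathbf{D}}^{-1}-\mathbf{D}^{-1})\widetilde{\mathbf{K}}\bigr\|_{\infty}=\max_{i}|\Delta_{i}|\widetilde{\mathbf{D}}(i,i)\lesssim n^{-2},$$
whereas
$$\bigl\|(\widetilde{\mathbf{D}}^{-1}-\mathbf{D}^{-1})\widetilde{\mathbf{K}}\bigr\|_{1}\le\max_{i}|\Delta_{i}|\cdot\max_{j}\sum_{i}\widetilde{\mathbf{K}}(i,j)\lesssim n^{-2}\cdot nr_{n}^{d}=n^{-1}r_{n}^{d},$$
using the standard Gaussian degree bound $\max_{j}\widetilde{\mathbf{D}}(j,j)\lesssim nr_{n}^{d}$ (which holds with probability $1-o(1)$ by Chernoff under $nr_{n}^{d}\gg\log n$). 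Interpolating,
$$\bigl\|(\widetilde{\mathbf{D}}^{-1}-\mathbf{D}^{-1})\widetilde{\mathbf{K}}\bigr\|\lesssim\sqrt{n^{-2}\cdot n^{-1}r_{n}^{d}}=n^{-3/2}r_{n}^{d/2}.$$
Combining,
$$\|\mathbf{L}-\widetilde{\mathbf{L}}\|\lesssim r_{n}^{-2}\bigl(n^{-2}+n^{-3/2}r_{n}^{d/2}\bigr)=\frac{1}{n^{2}r_{n}^{2}}+\frac{1}{n^{3/2}r_{n}^{2-d/2}}=o(1),$$
where the last equality is verified case by case using \eqref{eq:2.2}.

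The main obstacle is the entry-wise $O(1)$ blow-up of $\mathbf{E}$ in the boundary shell $|\|\mathbf{x}_{i}-\mathbf{x}_{j}\|_{p}-r_{n}|\lesssim 1/(\alpha_{n}r_{n})$: a naive bound treating every pair as a shell pair gives $\|\mathbf{E}\|=O(1)$, which is hopeless after dividing by $r_{n}^{2}\to0$. The assumption $\alpha_{n}\gg n^{7/2}$ is exactly what makes the expected shell-pair count vanish uniformly across all admissible $r_{n}$ from \eqref{eq:2.2}, so that the crude first-moment Markov bound suffices and no delicate Bernstein-type concentration is needed; without this the argument would have to track the shell more carefully, and the gap between $\ell_{2}$ and $\ell_{\infty}$ operator norms (especially in $d=1$) would become delicate.
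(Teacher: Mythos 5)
Your proof is correct and shares the paper's core strategy — Lemma \ref{lemma:sig2ind} for the sigmoid-versus-indicator comparison, identification of a thin shell where the entrywise error fails to be negligible, and a first-moment/union-bound argument that with high probability no sample pair lands in the shell, after which $\Ka-\Kb$ is entrywise $\lesssim n^{-3}$. However, the two proofs diverge in two technically substantive places. First, where the paper passes from entrywise bounds to the operator norm with the crude inequality $\|\mathbf A\|\le\sqrt n\max_i\sum_j|a_{ij}|$ of \eqref{eq:matrixnorm} (paying a $\sqrt n$ factor and controlling a single maximum), you split $\Da^{-1}\Ka-\Db^{-1}\Kb=\Db^{-1}(\Ka-\Kb)+(\Da^{-1}-\Db^{-1})\Ka$ and interpolate via $\|\mathbf A\|\le\sqrt{\|\mathbf A\|_1\|\mathbf A\|_\infty}$; this is a genuinely different bookkeeping scheme, yielding a sharper bound, at the cost of invoking a degree estimate $\max_j\Da(j,j)\lesssim nr_n^d$ that the paper never needs (note the trivial $\Da(j,j)\le C_2 n$ would also close the argument after interpolation, so the Chernoff degree bound is dispensable). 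Second, for the shell probability you parametrize the shell in terms of $\normp{\xb_i-\xb_j}$ and appeal directly to the boundedness of the Gaussian density on $\R^d$ to get $\P(\text{pair in shell})\lesssim r_n^{d-1}\delta_n$ in one stroke for every $1\le p\le\infty$. The paper instead parametrizes the shell in $\normp{\xb_i-\xb_j}^2$ and bounds the density of $\tfrac12\normp{\xb_i-\xb_j}^2$ near $r_n^2/2$, which requires a case split between $p<\infty$ and $p=\infty$ and a computation through convolutions of one-dimensional marginal densities. Your version is simpler and handles all $p$ uniformly, which is a real gain. Two points to tighten in a final write-up: you should state explicitly that $\delta_n\ll r_n$ (this follows from $\alpha_n\gg n^{7/2}$ and the lower bound on $r_n$ in \eqref{eq:2.2}) so that the annulus-volume estimate and the inequality $\normp{\xb_i-\xb_j}^2-r_n^2\ge r_n\delta_n$ are valid and so that the diagonal entries $i=j$ automatically fall outside the shell; and the step $\|(\Da^{-1}-\Db^{-1})\Ka\|_\infty=\max_i|\Delta_i|\Da(i,i)\lesssim n^{-2}$ deserves a line of explanation — it is not that $\Da(i,i)$ is $O(1)$ (it can be as large as $nr_n^d$), but rather that $|\Delta_i|\Da(i,i)=|\Db(i,i)-\Da(i,i)|/\Db(i,i)\lesssim|\Db(i,i)-\Da(i,i)|$ once you insert the diagonal lower bound $\Db(i,i)\ge g(0)$.
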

	The proof relies on the fact that from Lemma \ref{lemma:sig2ind}, for sufficiently large $\alpha_n$, with high probability, the difference between the 0-1 kernel and the sigmoid‐type kernel is negligible. Consequently, $\Kb$ and $\Ka$ are entry‐wise close. Since $\Db$ and $\Da$ are their associated degree matrices, they are also close. This will yield the closeness of $\Lb$ and $\Lc$. 
	\begin{proof}
		From the definition of $\Lb$ and $\Lc$, we have that
		\begin{align}
			\norm{\Lc-\Lb}&=\frac{2m_0}{m_2r_n^2}\norm{\Da^{-1}\Ka-\Db^{-1}\Kb}\lesssim\frac{\sqrt{n}}{r_n^2}\norm{\Da^{-1}\Ka-\Db^{-1}\Kb}_{\infty} \notag\\
			&=\frac{\sqrt{n}}{r_n^2}\max_{1\le i\le n}\sum_{j=1}^{n}\left|\frac{\Ka(i,j)}{\tilde{d_i}}-\frac{\Kb(i,j)}{d_i}\right|\label{eq:5.2.2},
		\end{align}
		where in the second step we used \eqref{eq:matrixnorm} and the last step follows from the definition of $\norm{\cdot}_{\infty}$. According to the form of \eqref{eq:5.2.2}, it remains to bound the following quantities uniformly for all $1\le i,j\le n$:
		$$
		\bigl|\Ka(i,j) - \Kb(i,j)\bigr|
		\text{ , }
		\bigl|\tilde{d_i} - d_i\bigr|.
		$$
		
		First, observe that
		\begin{align}
			|\Ka(i,j)-\Kb(i,j)|&=\left|g(\normp{\xb_i-\xb_j}/r_n)\bm{1}(\normp{\xb_i-\xb_j}\le r_n)-\frac{g^*(\norm{\xb_i-\xb_j}/r_n)}{1+\exp(-\alpha_n(r_n^2-\normp{\xb_i-\xb_j}^2))}\right|\notag\\
			&\le\frac{\max_{t\ge 0}g^*(t)}{1+\exp(\alpha_n|r_n^2-\normp{\xb_i-\xb_j}^2|)}\lesssim\exp(-\alpha_n|r_n^2-\normp{\xb_i-\xb_j}^2|),\label{eq:5.2.3}
		\end{align}
		where the second step is from Lemma \ref{lemma:sig2ind} and in the last step we used the boundedness assumption on $g^*(\cdot)$ as in \eqref{eq:2.2.1}. To control \eqref{eq:5.2.3}, we first pause to provide controls for $\norm{\xb_i-\xb_j}^2$. 
		When $i=j$, 
		$$\left|\normp{\xb_i-\xb_j}^2-r_n^2\right|=r_n^2>\frac{r_n^{2-d}}{n^{3+\xi}},$$
		where we used \eqref{eq:2.2}. This yields for $i=j$,
		\begin{align}
			\mathbb{P}\left(\left|\normp{\xb_i-\xb_j}^2-r_n^2\right|\le\frac{r_n^{2-d}}{n^{3+\xi}}\right)=0.\label{eq:B.1.7}
		\end{align}
		Recall $\xb_i\stackrel{\operatorname{i.i.d.}}{\sim}\mathcal{N}_d(\bm{0},\Sigma)$, $i=1,\cdots,n$, with $\Sigma$ as in \eqref{eq:2.1}. Then, for $i \neq j$, $\xb_i-\xb_j\sim\mathcal{N}_d(\bm{0},2\Sigma)$. Denote $S\coloneq\frac12\normp{\xb_i-\xb_j}^2$. Now we discuss two cases, $1\le p<\infty$ and $p=\infty$.
		\begin{enumerate}
			\item $1\le p<\infty$.\;\; Let 
			$$T\coloneq\sum_{i=1}^d \sigma_i^p|W_i|^p\coloneq\sum_{i=1}^dY_i,\quad W_i\stackrel{\operatorname{i.i.d.}}{\sim}\mathcal{N}(0,1),\quad i=1,\cdots,d.$$
			Then we have $S\stackrel{d}{=}T^{2/p}$, where $\stackrel{d}{=}$ denotes equal in distribution. Observe that each $Y_i$ has the density 
			\begin{align}
				\rho_i(y)=\frac{2}{p\sigma_i\sqrt{2\pi}}y^{1/p-1}\exp\left(-\frac{y^{2/p}}{2\sigma_i^2}\right)\le\frac{y^{1/p-1}}{\sigma_i}\coloneq\hat\rho_i(y),\quad y>0.\label{eq:B.1.1}
			\end{align}
			Denote the density of $T$ as $\rho_{T}(\cdot)$. We can see that $\rho_{T}(\cdot)$ is the $d$-fold convolution of $\rho_1,\cdots,\rho_d$, i.e.,
			\begin{align}
				\rho_{T}=\rho_1*\cdots*\rho_d,\notag
			\end{align}
			where $*$ denotes convolution. By \eqref{eq:B.1.1}, for $y>0$
			$$\rho_{T}(y)=(\rho_1*\cdots*\rho_d)(y)\le(\hat\rho_1*\cdots*\hat\rho_d)(y).$$
			Using the identity (5.14.1) in \cite{olver2010nist} with the change of variables $u_i=yt_i$ and setting $z_i=1/p$, $i=1,\cdots,d$, we have that
			\begin{align}
				(\hat\rho_1*\cdots*\hat\rho_d)(y)=\frac{\prod_{i=1}^d\Gamma(1/p)/\sigma_i}{\Gamma(d/p)}y^{d/p-1},\notag
			\end{align}
			which indicates that $\rho_{T}(y)\lesssim y^{d/p-1}$. Denote the density of $S$ as $\rho_S(\cdot)$. Using a change of variable $S=T^{2/p}$, we have that for $y>0$,
			\begin{align}
				\rho_S(y)=\rho_T(y^{p/2})\frac{p}{2}y^{p/2-1}\lesssim y^{d/2-1}.\label{eq:B.1.3}
			\end{align}
			Fix some small fixed constant $0<\xi < 0.3$ and denote 
			\begin{align}
				I\equiv I_n\coloneq \left[\frac{r_n^2}{2} - \frac{r_n^{2-d}}{2n^{3+\xi}},\frac{r_n^2}{2} + \frac{r_n^{2-d}}{2n^{3+\xi}}\right].\label{eq:I}
			\end{align}
			For any $s \in I$, we have from \eqref{eq:2.2} that $r_n^2 \gg \tfrac{r_n^{2-d}}{n^{3+\xi}}$. This implies that $|y - \tfrac{r_n^2}{2}|\le\tfrac14\,r_n^2$ for $y \in I$. Therefore, uniformly for $y \in I$,
			\begin{align}
				\rho_S(y)
				&\lesssim \left(\tfrac{r_n^2}{2} + y-\tfrac{r_n^2}{2}\right)^{d/2-1}\lesssim\bigl(r_n^2/4\bigr)^{d/2-1}=\OO(r_n^{d-2}). \label{eq:5.2.4}
			\end{align}
			Consequently,
			\begin{align}
				\mathbb{P}\left(\left|\normp{\xb_i-\xb_j}^2-r_n^2\right|\le\frac{r_n^{2-d}}{n^{3+\xi}}\right)&=\mathbb{P}\left(\left|\frac{1}{2}\normp{\xb_i-\xb_j}^2-\frac{1}{2}r_n^2\right|\le\frac{r_n^{2-d}}{2n^{3+\xi}}\right)\notag\\
				&=\int_{I}\rho_S(y)\dd y\lesssim\frac{r_n^{2-d}}{n^{3+\xi}}r_n^{d-2}=\OO\left(\frac{1}{n^{3+\xi}}\right),\label{eq:5.2.5}
			\end{align}
			where in the last but one step we used \eqref{eq:5.2.4}.
			
			\item $p=\infty$.\;\; We have that 
			$$S\stackrel{d}{=}\max_{1\le i\le d}V_i,\quad V_i=\sigma_i^2W_i^2,\quad W_i\stackrel{\operatorname{i.i.d.}}{\sim}\mathcal{N}(0,1),\quad i=1,\cdots,d.$$
			Observe that each $V_i$ has density
			\begin{align}
				\rho'_i(y)=\frac{1}{\sigma_i\sqrt{2\pi}}y^{-1/2}\exp\left(-\frac{y}{2\sigma_i^2}\right)\le\frac{1}{\sigma_i}y^{-1/2}.\notag
			\end{align}
			Similarly, we have for $y\in I$ with $I$ as in \eqref{eq:I}, for all $i=1,\cdots,d$,
			\begin{align}
				\rho'_i(y)\lesssim r_n^{-1}.\label{eq:B.1.5}
			\end{align}
			Then we have that 
			\begin{align}
				\mathbb{P}\left(\left|\norm{\xb_i-\xb_j}_\infty^2-r_n^2\right|\le\frac{r_n^{2-d}}{n^{3+\xi}}\right)&=
				\mathbb{P}\left(\left|\max_{1\le i\le d}V_i-\frac{1}{2}r_n^2\right|\le\frac{r_n^{2-d}}{2n^{3+\xi}}\right)\notag\\
				&\le\sum_{i=1}^d \mathbb{P}\left(\left|V_i-\frac{1}{2}r_n^2\right|\le\frac{r_n^{2-d}}{2n^{3+\xi}}\right)\notag\\
				&\le\sum_{i=1}^d\int_I\rho'_i(y)\dd y\lesssim d \frac{r_n^{2-d}}{n^{3+\xi}}r_n^{-1}\notag\\
				&=\OO\left(\frac{r_n^{1-d}}{n^{3+\xi}}\right)=\OO\left(\frac{1}{n^{2+\xi}}\right),\label{eq:B.1.4}
			\end{align}
			where in the last but two step we used \eqref{eq:B.1.5} and in the last step we used \eqref{eq:2.2}.
		\end{enumerate}
		
		Combining the two cases and \eqref{eq:B.1.7}, we have that for $1\le p\le\infty$ and any $1\le i,j\le n$,
		\begin{align}
			\mathbb{P}\left(\left|\normp{\xb_i-\xb_j}^2-r_n^2\right|\le\frac{r_n^{2-d}}{n^{3+\xi}}\right)=\OO\left(\frac{1}{n^{2+\xi}}\right).\label{eq:B.1.6}
		\end{align}
		To provide a control uniformly in $i$ and $j$, we make use of the following probability event
		\begin{align}
			E\equiv E_n\coloneq\left\{\min_{i,j}\left|\normp{\xb_i-\xb_j}^2-r_n^2\right|>\frac{r_n^{2-d}}{n^{3+\xi}}\right\}.\label{eq:5.2.6}
		\end{align}
		In what follows, we will show that $E$ holds with high probability. By Boole’s inequality,
		\begin{align}
			\P(E)&=1-\P\left(\min_{i,j}\left|\normp{\xb_i-\xb_j}^2-r_n^2\right|\le\frac{r_n^{2-d}}{n^{3+\xi}}\right)\notag\\
			&\ge1-\sum_{i=1}^n\sum_{j=1}^n\mathbb{P}\left(\left|\normp{\xb_i-\xb_j}^2-r_n^2\right|\le\frac{r_n^{2-d}}{n^{3+\xi}}\right)\notag\\
			&\ge1-n^2\OO\left(\frac{1}{n^{2+\xi}}\right)=1-\OO\left(\frac{1}{n^{\xi}}\right)\label{eq:5.2.7},
		\end{align}
		where the last but one step follows from \eqref{eq:B.1.6}. From now on, we restrict the discussion on the event $E$ so that all the arguments will be deterministic.
		
		Now we proceed to control the right-hand side of \eqref{eq:5.2.2}. Note on $E$, we have 
		$$
		\min_{i,j} \bigl|\normp{\xb_i-\xb_j}^2 - r_n^2\bigr|
		> \frac{r_n^{2-d}}{n^{3+\xi}}.
		$$
		Consequently, by \eqref{eq:5.2.3}, we have
		\begin{align}
			\max_{i,j}|\Ka(i,j)-\Kb(i,j)|&\le\max_{i,j}\exp(-\alpha_n|r_n^2-\normp{\xb_i-\xb_j}^2|)\notag\\
			&=\exp\left(-\alpha_n\min_{i,j}\left|\normp{\xb_i-\xb_j}^2-r_n^2\right|\right)\le\exp\left(-\frac{\alpha_nr_n^{2-d}}{n^{3+\xi}}\right).\label{eq:5.2.8}
		\end{align}
		To control the right-hand side of \eqref{eq:5.2.8}, by \eqref{eq:2.2} and the assumption that $\alpha_n\gg n^{7/2}$, for sufficiently large $n$,
		$$-\frac{\alpha_n r_n^{2-d}}{n^{3+\xi}}\le-\frac{\alpha_nr_n}{n^{3+\xi}}\le-\frac{n^{7/2-1/5}}{n^{3+\xi}}=-n^{0.3-\xi}.$$ Combining with \eqref{eq:5.2.8}, we have 
		\begin{align}
			\max_{i,j}|\Ka(i,j)-\Kb(i,j)|=\oo(n^{-3})\label{eq:5.2.9}.
		\end{align}
		
		Second, on $E$, we have
		\begin{align}
			\max_{i}|\tilde{d_i}-d_i|&=\max_{i}\left|\sum_{j=1}^n(\Ka(i,j)-\Kb(i,j))\right|\le\max_{i}\sum_{j=1}^n\left|\Ka(i,j)-\Kb(i,j)\right|\notag\\
			&\le n\max_{i,j}\left|\Ka(i,j)-\Kb(i,j)\right|=\oo(n^{-2}),\label{eq:5.2.10}
		\end{align}
		where in the last step we used \eqref{eq:5.2.9}.
		
		Together with the above bounds, we proceed to control \eqref{eq:5.2.2}. We decompose that
		\begin{align}
			\max_{i,j}\left|\frac{\Ka(i,j)}{\tilde{d_i}}-\frac{\Kb(i,j)}{d_i}\right|&=\max_{i,j}\left|\frac{\Ka(i,j)}{\tilde{d_i}}-\frac{\Kb(i,j)}{\tilde{d_i}}+\frac{\Kb(i,j)}{\tilde{d_i}}-\frac{\Kb(i,j)}{d_i}\right|\notag\\
			&\le\max_{i,j}\left|\frac{\Ka(i,j)}{\tilde{d_i}}-\frac{\Kb(i,j)}{\tilde{d_i}}\right|+\max_{i,j}\left|\frac{\Kb(i,j)}{\tilde{d_i}}-\frac{\Kb(i,j)}{d_i}\right|\notag\\
			&\le\max_{i,j}\frac{|\Ka(i,j)-\Kb(i,j)|}{\tilde{d_i}}+\max_{i,j}\Kb(i,j)\frac{|\tilde{d_i}-d_i|}{\tilde{d_i}d_i}\label{eq:5.2.11}.
		\end{align}
		To control the first term of \eqref{eq:5.2.11}, we start providing a lower bound of $\tilde{d_i}$. Observe that
		\begin{align}
			d_i = \sum_{j=1}^{n} \Kb(i,j)\ge\Kb(i,i) = 1.\label{eq:5.2.12}
		\end{align}
		According to \eqref{eq:5.2.10}, we readily obtain that the followings holds on $E$:
		\begin{align}
			\tilde{d_i}&=d_i+\tilde{d_i}-d_i\ge d_i-|\tilde{d_i}-d_i|\ge1-\oo(n^{-1/2})\ge\frac{1}{2}.\label{eq:5.2.13}
		\end{align}
		Together with \eqref{eq:5.2.9}, we find that the first term of the right-hand side of \eqref{eq:5.2.11} is bounded by $\oo(n^{-3})$.
		
		Then we proceed to control the second term of the right-hand side of \eqref{eq:5.2.11}. Using the fact that $\Kb(i,j)=\bm{1}(\normp{\xb_i-\xb_j}\le r_n)\le1$, \eqref{eq:5.2.11} , \eqref{eq:5.2.12} and \eqref{eq:5.2.13}, we see that the second term of the right-hand side of \eqref{eq:5.2.11} can be bounded by $\oo(n^{-2})$. In summary, we have proved that on $E$,
		\begin{align}
			\max_{i,j}\left|\frac{\Ka(i,j)}{\tilde{d_i}}-\frac{\Kb(i,j)}{d_i}\right|=\oo(n^{-2}).\notag
		\end{align}
		
		Consequently, we have from \eqref{eq:5.2.2} that on $E$,
		\begin{align*}
			\norm{\Lc-\Lb}&\lesssim\frac{\sqrt{n}}{r_n^2}\max_{1\le i\le n}\sum_{j=1}^{n}\left|\frac{\Ka(i,j)}{\tilde{d_i}}-\frac{\Kb(i,j)}{d_i}\right|\lesssim r_n^{-2}\sqrt{n}n\oo(n^{-2})=\oo\left(\frac{1}{\sqrt{n}r_n^2}\right)=\oo(1),\notag
		\end{align*}
		where in the last step we again used \eqref{eq:2.2}. This shows $\|\Lc-\Lb\| = \oop(1)$ and completes our proof.
	\end{proof}
	
	Recall $m_2$ as in \eqref{eq:mj}, $\varphi$ as in \eqref{eq:varphi}, $\mathcal{T}_n$ in \eqref{eq:2.2.5} and the multi-index notation in Section \ref{sec:prop}. The following lemma provides a key representation for $\mathcal{T}_n\varphi(\xb)$. This result is an essential component of the proof of Proposition \ref{prop:pointwise} in Section \ref{sec:prop}.
	\begin{lemma}
		\label{lemma:expansion}
		For $\varphi(\xb)$ as in \eqref{eq:varphi}, we have that
		\begin{align}
			\mathcal{T}_n \varphi(\xb)=\frac{2C_\varphi}{m_2}\sum_{\substack{0\le\bm{s}\le\bm{k}\\ \bm{s}\ne\bm{k}}}\sum_{\bm{\alpha}\in\N^d}\frac{(-1)^{|\bm{\alpha}|}}{\bm{\alpha}!2^{|\bm{\alpha}|}}\mathsf{C}_{\bm{k},\bm{s},\bm{\alpha}}\bm{\sigma}^{-\bm{\gamma}}r_n^{|\bm{\gamma}|-2}\Lambda_{\bm{\gamma}}(r_n;\bm{\sigma},g),\label{eq:B.2}
		\end{align}
		with $\Lambda_{\bm{\gamma}}(r_n;\bm{\sigma},g)$ in \eqref{eq:5.3.0.2} and $\mathsf{C}_{\bm{k},\bm{s},\bm{\alpha}}$ in \eqref{eq:5.3.0.3}.
	\end{lemma}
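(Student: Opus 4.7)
The plan is to derive \eqref{eq:B.2} by making the substitution $\yb=\xb-r_n\zb$ in \eqref{eq:2.2.5}, expanding both the test function and the Gaussian ratio as explicit series in $r_n\bm{z}$ whose coefficients are Hermite polynomials evaluated at $\xb/\bm{\sigma}$, and then matching the resulting $\bm{z}$-integrals with $\Lambda_{\bm{\gamma}}$. After the change of variables, the indicator becomes $\bm{1}(\normp{\zb}\le 1)$ and the Jacobian $r_n^d$ cancels against the normalization $r_n^{d+2}$, yielding
$$
\mathcal{T}_n\varphi(\xb)=\frac{2}{r_n^2 m_2}\int_{\normp{\zb}\le 1}g(\normp{\zb})\bigl(\varphi(\xb)-\varphi(\xb-r_n\zb)\bigr)\frac{\varrho(\xb-r_n\zb)}{\varrho(\xb)}\,\dd\zb.
$$

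For the numerator I would apply the Hermite translation identity $H_n(u+v)=\sum_{k=0}^{n}\binom{n}{k}H_k(u)(2v)^{n-k}$ coordinatewise with $u=x_i/\sigma_i$ and $v=-r_n z_i/\sigma_i$, which gives
$$
\varphi(\xb-r_n\zb)=C_\varphi\sum_{\bm{0}\le\bm{s}\le\bm{k}}\mathsf{H}_{\bm{s}}(\xb/\bm{\sigma})\,\bm{\sigma}^{-(\bm{k}-\bm{s})}(-2r_n)^{|\bm{k}-\bm{s}|}\bm{z}^{\bm{k}-\bm{s}}\prod_{i=1}^{d}\binom{k_i}{s_i}.
$$
The $\bm{s}=\bm{k}$ term equals $\varphi(\xb)$ and cancels in the difference, producing the restriction $\bm{s}\ne\bm{k}$ appearing in \eqref{eq:B.2}.

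For the Gaussian ratio I would complete the square: since $\log(\varrho(\xb-r_n\zb)/\varrho(\xb))=\sum_i r_n x_iz_i/\sigma_i^2-\sum_i r_n^2 z_i^2/(2\sigma_i^2)$, splitting the quadratic term into two equal halves and matching one half against the Hermite generating function $e^{2ut-t^2}=\sum_\alpha H_\alpha(u)t^\alpha/\alpha!$ with $u=x_i/\sigma_i$ and $t=r_nz_i/(2\sigma_i)$ leaves $\exp(-\sum_i r_n^2 z_i^2/(4\sigma_i^2))$ as a residual factor and yields
$$
\frac{\varrho(\xb-r_n\zb)}{\varrho(\xb)}=\exp\!\left(-\sum_{i=1}^{d}\frac{r_n^2z_i^2}{4\sigma_i^2}\right)\sum_{\bm{\alpha}\in\N^d}\mathsf{H}_{\bm{\alpha}}(\xb/\bm{\sigma})\,\frac{r_n^{|\bm{\alpha}|}\bm{z}^{\bm{\alpha}}}{2^{|\bm{\alpha}|}\bm{\alpha}!\,\bm{\sigma}^{\bm{\alpha}}}.
$$
This is exactly the exponential weight embedded in $\Lambda_{\bm{\gamma}}$ in \eqref{eq:5.3.0.2}, so the $\bm{z}$-integral of the $(\bm{s},\bm{\alpha})$-th term against $g(\normp{\zb})\bm{1}(\normp{\zb}\le 1)$ equals $\Lambda_{\bm{\gamma}}(r_n;\bm{\sigma},g)$ with $\bm{\gamma}=\bm{k}-\bm{s}+\bm{\alpha}$.

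Multiplying the two series, collecting the powers as $(-2r_n)^{|\bm{k}-\bm{s}|}r_n^{|\bm{\alpha}|}/2^{|\bm{\alpha}|}=(-1)^{|\bm{k}-\bm{s}|}2^{|\bm{k}-\bm{s}|-|\bm{\alpha}|}r_n^{|\bm{\gamma}|}$, combining with the outer $1/r_n^2$ to form $r_n^{|\bm{\gamma}|-2}$, and factoring out $\mathsf{C}_{\bm{k},\bm{s},\bm{\alpha}}=-\mathsf{H}_{\bm{s}}(\xb/\bm{\sigma})\mathsf{H}_{\bm{\alpha}}(\xb/\bm{\sigma})\prod_i\binom{k_i}{s_i}2^{k_i-s_i}$ recovers \eqref{eq:B.2}, except that the sign comes out as $(-1)^{|\bm{k}-\bm{s}|}$ rather than $(-1)^{|\bm{\alpha}|}$. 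These agree on every surviving term because $\Lambda_{\bm{\gamma}}$ vanishes by parity whenever some $\gamma_i$ is odd, forcing $\alpha_i\equiv k_i-s_i\pmod 2$ for each $i$, hence $|\bm{\alpha}|\equiv|\bm{k}-\bm{s}|\pmod 2$ on the surviving indices. The only genuine obstacle is justifying the Fubini-type interchange of the $\bm{\alpha}$-series with the $\zb$-integral; since $g$ is bounded on $[0,1]$, $|\bm{z}^{\bm{\alpha}}|\le 1$ on $\{\normp{\zb}\le 1\}$, and standard Hermite growth bounds imply the series in $\bm{\alpha}$ converges absolutely at each fixed $\xb$, dominated convergence validates the exchange, and the $\bm{s}$-sum is finite so requires no further argument.
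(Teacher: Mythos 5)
Your proposal is correct and mirrors the paper's own proof: a change of variables to $\zb$, the Hermite translation identity for the numerator (the content of Lemma~\ref{lemma:hermitediff}), the Hermite generating function for the Gaussian ratio (Lemma~\ref{lemma:hermitegenerating}), and a Fubini interchange justified by uniform/absolute convergence of the $\bm{\alpha}$-series on $\{\normp{\zb}\le 1\}$. The only cosmetic difference is your choice $\yb=\xb-r_n\zb$ versus the paper's $\yb=\xb+r_n\zb$, which produces a temporary $(-1)^{|\bm{k}-\bm{s}|}$ sign that you correctly reconcile with the stated $(-1)^{|\bm{\alpha}|}$ via the parity-vanishing of $\Lambda_{\bm{\gamma}}$ (all surviving $\bm{\gamma}$ have every coordinate even, so $|\bm{\alpha}|\equiv|\bm{k}-\bm{s}|\pmod 2$).
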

	\begin{proof}
		Recall $\mathcal{T}_n$ in \eqref{eq:2.2.5}. By a change of variable that $\zb=(\yb-\xb)/r_n$, \eqref{eq:2.2.5} becomes
		\begin{align}
			\mathcal{T}_n \varphi(\xb)=\frac{2C_\varphi}{m_2r_n^2}\int_{\normp{\zb}\le1}g(\normp{\zb})(\varphi(\xb)-\varphi(\xb+r_n\zb))\frac{\varrho(\xb+r_n\zb)}{\varrho(\xb)}\dd\zb.\label{eq:B.3}
		\end{align}
		Applying Lemma \ref{lemma:hermitediff} to $\varphi(\xb)-\varphi(\xb+r_n\zb)$ and Lemma \ref{lemma:hermitegenerating} to $\varrho(\xb+r_n\zb)/\varrho(\xb)$, we can rewrite
		\eqref{eq:B.3} as
		\begin{align}
			\mathcal{T}_n \varphi(\xb)=&\frac{2C_\varphi}{m_2r_n^2}\int_{\normp{\zb}\le1}-g(\normp{\zb})\sum_{\substack{0\le\bm{s}\le\bm{k}\\ \bm{s}\ne\bm{k}}}\left(\prod_{i=1}^d\binom{k_i}{s_i}\right)\mathsf{H}_{\bm{s}}\left(\frac{\xb}{\bm{\sigma}}\right)\left(\frac{2\zb r_n}{\bm{\sigma}}\right)^{\bm{k}-\bm{s}}\times\notag\\
			&\exp\left(-\sum_{i=1}^d\frac{z_i^2r_n^2}{4\sigma_i^2}\right)\sum_{\bm{\alpha}\in\N^d}\frac{1}{\bm{\alpha}!}\mathsf{H}_{\bm{\alpha}}\left(\frac{\xb}{\bm{\sigma}}\right)\left(\frac{-\zb r_n}{2\bm{\sigma}}\right)^{\bm{\alpha}}\dd\zb.\label{eq:B.1}
		\end{align}
		
		On the other hand, to justify swapping the integral and summation, we show that the series
		\begin{align}
			\sum_{\bm{\alpha}\in\N^d}\frac{1}{\bm{\alpha}!}\mathsf{H}_{\bm{\alpha}}\left(\frac{\xb}{\bm{\sigma}}\right)\left(\frac{-\zb r_n}{2\bm{\sigma}}\right)^{\bm{\alpha}},\notag
		\end{align}
		converges uniformly for all $\normp{\zb}\le1$. Since $\normp{\zb}\le1$ and $r_n=\oo(1)$, there exists $\rho>0$ such that for sufficiently large $n$, $|z_i r_n/(2\sigma_i)|<\rho<1/2$ for $i=1,\cdots,d$. Then, we have that for all $\normp{\zb}\le1$,
		\begin{align*}
			\sum_{\bm{\alpha}\in\N^d}\frac{1}{\bm{\alpha}!}|\mathsf{H}_{\bm{\alpha}}(\xb/\bm{\sigma})|(|\zb r_n|/(2\bm{\sigma}))^{\bm{\alpha}}&\le\prod_{i=1}^d\sum_{\alpha_i=0}^\infty\frac{|H_{\alpha_i}(x_i/\sigma_i)|}{\alpha_i!}\rho^{\alpha_i}\\
			&\le\prod_{i=1}^d\sum_{\alpha_i=0}^\infty\frac{\rho^{\alpha_i}}{\alpha_i!}2^{\alpha_i/2}\sqrt{\alpha_i!}\exp(x_i^2/(2\sigma_i^2))\\
			&\le\prod_{i=1}^d\exp(x_i^2/(2\sigma_i^2))\sum_{\alpha_i=0}^\infty\frac{1}{\sqrt{\alpha_i!}}<\infty,
		\end{align*}
		where in the second step we used (18.14.9) in \cite{olver2010nist} that $|H_m(x)|\le (2^m m!)^{1/2}\exp(x^2/2)$. This indicates that 
		$\sum_{\bm{\alpha}\in\N^d}\frac{1}{\bm{\alpha}!}\mathsf{H}_{\bm{\alpha}}\left(\frac{\xb}{\bm{\sigma}}\right)\left(\frac{-\zb r_n}{2\bm{\sigma}}\right)^{\bm{\alpha}}$
		converges uniformly for all $\normp{\zb}\le1$. Consequently, we can apply Fubini's theorem to switch the order of integral and summation in \eqref{eq:B.1} and conclude \eqref{eq:B.2}.
	\end{proof}
	
	Recall $\widetilde{\mathcal{T}}_n$ in \eqref{eq:defttn}, $\mathcal{T}_n$ in \eqref{eq:2.2.3} and $\varphi$ as in \eqref{eq:varphi}. In the following lemma, we finish the second step of the variance part in the proof of Proposition \ref{prop:pointwise} in Section \ref{sec:prop}.
	\begin{lemma}
		\label{lemma:ttn}
		Under the assumptions of Theorem \ref{theo:normald=1}, for $\varphi$ as in \eqref{eq:varphi}, $\normf{(\mathcal{T}_n-\widetilde{\mathcal{T}}_n)\varphi}=\oo(1)$.
	\end{lemma}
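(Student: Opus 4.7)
The plan is to adapt the four-term decomposition used for $(\sLn-\widetilde{\mathcal{T}}_n)\varphi$ in the proof of Proposition \ref{prop:pointwise}, with two major simplifications: both $\mathcal{T}_n$ and $\widetilde{\mathcal{T}}_n$ are deterministic, so no concentration inequality is needed, and the sigmoid-versus-indicator discrepancy disappears entirely. Writing out the definitions gives
$$(\mathcal{T}_n-\widetilde{\mathcal{T}}_n)\varphi(\xb)=\widetilde{\mathcal{T}}_n\varphi(\xb)\left(\frac{D_2(\xb)}{m_0\,r_n^d\,\varrho(\xb)}-1\right),$$
with $D_2$ as in \eqref{eq:D1D2}, so the task reduces to estimating $D_2(\xb)/(m_0\,r_n^d\,\varrho(\xb))-1$ together with a polynomial-growth bound on $\widetilde{\mathcal{T}}_n\varphi$. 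As in Section \ref{sec:prop}, I would split $\R^d=B_1\cup B_1^{\complement}$ with $B_1$ as in \eqref{eq:bulk} and treat the two regions separately.

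On the bulk, change variables $\zb=(\yb-\xb)/r_n$ to write $D_2(\xb)/r_n^d=\int_{\normp{\zb}\le 1}g(\normp{\zb})\varrho(\xb+r_n\zb)\,d\zb$, and expand $\varrho(\xb+r_n\zb)/\varrho(\xb)=\exp(-\sum_i x_i r_n z_i/\sigma_i^2-\sum_i r_n^2 z_i^2/(2\sigma_i^2))$ as a power series in $r_n$. By the symmetry of $g(\normp{\cdot})$ under each coordinate flip, every odd-order $\zb$-moment vanishes, so the first-order contributions cancel and
$$\frac{D_2(\xb)}{m_0\,r_n^d\,\varrho(\xb)}-1 = O\!\bigl(r_n^2(1+\norm{\xb}_\Sigma^2)\bigr)$$
uniformly for $\xb\in B_1$; the condition $r_n\sqrt{\log n}=o(1)$ guaranteed by \eqref{eq:2.2} is what keeps this expansion valid. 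Coupled with the polynomial bound $|\widetilde{\mathcal{T}}_n\varphi(\xb)|\lesssim 1+\norm{\xb}_\Sigma^{|\bm{k}|}$ derived as in Lemma \ref{lemma:expansion} together with the bias estimate \eqref{eq:5.3.0.10}, the bulk piece integrates to
$$\normb{(\mathcal{T}_n-\widetilde{\mathcal{T}}_n)\varphi\,\bm{1}_{B_1}}_{\cF}^2\lesssim r_n^4\int_{\R^d}(1+\norm{\xb}_\Sigma^{2|\bm{k}|+4})\varrho^2(\xb)\,d\xb = O(r_n^4)=o(1).$$

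For the tail $\xb\in B_1^{\complement}$, I would recycle the arithmetic used to control $\mathsf{E}_2$ and $\mathsf{E}_4$: the bounds $|\varphi|\lesssim\norm{\xb}_\Sigma^{|\bm{k}|}$, $D_2(\xb)\gtrsim r_n^d\varrho(\xb)$ (because $\varrho$ varies by only a constant factor across a radius-$r_n$ ball near $\xb$), and $|N_2(\xb)|\lesssim r_n^{d+2}\norm{\xb}_\Sigma^{|\bm{k}|}\varrho(\xb)$ combine, after integration against the rapidly decaying weight $\varrho^2$ on $B_1^{\complement}$, to yield a contribution of order $n^{-(1+\beta)}=o(r_n^4)$.

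The main obstacle is the bulk step: since $x_i r_n/\sigma_i^2$ can be as large as $O(r_n\sqrt{\log n})$ on $B_1$, a naive first-order Taylor expansion of $\varrho(\xb+r_n\zb)/\varrho(\xb)$ is not small enough, and one must crucially exploit the parity of the kernel $g(\normp{\zb})$ to kill the linear-in-$\zb$ terms before controlling the quadratic remainder. Once that cancellation is secured, the remainder of the argument is routine bookkeeping of polynomial factors against $\varrho^2$, entirely analogous to the estimates for $\mathsf{E}_2$ and $\mathsf{E}_4$ already carried out in Section \ref{sec:prop}.
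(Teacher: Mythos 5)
Your bulk/tail split and the ratio identity $(\mathcal{T}_n-\widetilde{\mathcal{T}}_n)\varphi = \widetilde{\mathcal{T}}_n\varphi\bigl(D_2/(m_0 r_n^d\varrho)-1\bigr)$ reproduce the structure of the paper's proof, which factors out $\mathcal{T}_n\varphi$ instead. However, there are two issues worth flagging.

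First, you have over-engineered the bulk estimate. You identify the "main obstacle" as needing to exploit parity of $g(\normp{\cdot})$ to kill the linear-in-$\zb$ term and get an $O(r_n^2)$ bound on $D_2/(m_0 r_n^d\varrho)-1$. But the lemma only requires $\oo(1)$, and the crude first-order bound you dismiss as "not small enough" is in fact sufficient: on $B_1$ the exponent in $\varrho(\xb+r_n\zb)/\varrho(\xb)$ is $O(r_n\sqrt{\log n})$, which is $\oo(1)$ uniformly by \eqref{eq:2.2}. This is exactly what the paper's Lemma \ref{lemma:interror} gives (no parity needed), and then $\normf{(\mathcal{T}_n-\widetilde{\mathcal{T}}_n)\varphi\bm{1}_{B_1}}\le \oo(1)\cdot\normf{\mathcal{T}_n\varphi}=\oo(1)\cdot(\normf{\Dr\varphi}+O(r_n^2))=\oo(1)$ via the bias bound \eqref{eq:5.3.0.10}. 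Your parity-based refinement is correct, but it introduces precisely the obstacle you then spend the most effort overcoming, when the obstacle is not there.

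Second, on the tail, the claim $D_2(\xb)\gtrsim r_n^d\varrho(\xb)$ "because $\varrho$ varies by only a constant factor across a radius-$r_n$ ball near $\xb$" is false on $B_1^\complement$. For $\normp{\xb-\yb}\le r_n$ one has $\varrho(\yb)/\varrho(\xb)=\exp\!\bigl(O(\norm{\xb}_\Sigma r_n)\bigr)$, and $\norm{\xb}_\Sigma r_n$ is unbounded on the unbounded domain $B_1^\complement$; the density ratio across an $r_n$-ball is only uniformly bounded on a compact region such as $\{|x_i|\lesssim\log n\}$. The paper's corresponding lower bound \eqref{eq:5.3.2.7} is the weaker $D_2\gtrsim r_n^d\varrho^2(\xb)$. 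Fortunately the lower bound on $D_2$ is not actually needed: $N_2/D_2$ is a weighted average of $\varphi(\xb)-\varphi(\yb)$ over the ball, so $|\widetilde{\mathcal{T}}_n\varphi(\xb)|\le\frac{2m_0}{m_2r_n^2}\bigl(|\varphi(\xb)|+\sup_{\normp{\yb-\xb}\le r_n}|\varphi(\yb)|\bigr)\lesssim r_n^{-2}\norm{\xb}_\Sigma^{|\bm{k}|}$, as in \eqref{eq:B.3.3}, regardless of how small $D_2$ is. After this repair your tail integration against $\varrho^2$ on $B_1^\complement$ goes through and yields $\oo(1)$. So the proposal is fixable and follows the same overall route, but as written it both works harder than necessary on the bulk and asserts a false density-ratio bound on the tail.
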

	\begin{proof}
		Recall $B_1$ in \eqref{eq:bulk}. By the triangle inequality ,
		$$\norm{(\mathcal{T}_n-\widetilde{\mathcal{T}}_n)\varphi}_{\cF}\le\norm{(\mathcal{T}_n-\widetilde{\mathcal{T}}_n)\varphi\bm{1}_{B_1}}_{\cF}+\norm{(\mathcal{T}_n-\widetilde{\mathcal{T}}_n)\varphi\bm{1}_{B_1^\complement}}_{\cF}.$$
		Therefore, the rest of the proof is to control $\norm{(\mathcal{T}_n-\widetilde{\mathcal{T}}_n)\varphi\bm{1}_{B_1}}_{\cF}$ and $\norm{(\mathcal{T}_n-\widetilde{\mathcal{T}}_n)\varphi\bm{1}_{B_1^\complement}}_{\cF}$ respectively.
		
		To control $\norm{(\mathcal{T}_n-\widetilde{\mathcal{T}}_n)\varphi\bm{1}_{B_1}}_{\cF}$, we have that
		\begin{align}
			&\norm{(\mathcal{T}_n-\widetilde{\mathcal{T}}_n)\varphi\bm{1}_{B_1}}_{\cF}\notag\\
			\asymp&r_n^{-2}\normbb{\int_{\normp{\xb-\yb}\le r_n}(\varphi(\xb)-\varphi(\yb))g\left(\tfrac{\normp{\xb-\yb}}{r_n}\right)\varrho(\yb)\dd\yb\left(\frac{1}{r_n^dm_0\varrho(\xb)}-\frac{1}{\int_{\normp{\xb-\yb}\le r_n}g\left(\tfrac{\normp{\xb-\yb}}{r_n}\right)\varrho(\yb)\dd\yb}\right)\bm{1}_{B_1}}_{\cF}\notag\\
			\lesssim&r_n^{-2}\normbb{\oo(1)\frac{\int_{\normp{\xb-\yb}\le r_n}(\varphi(\xb)-\varphi(\yb))g(\normp{\xb-\yb}/r_n)\varrho(\yb)\dd\yb}{r_n^dm_0\varrho(\xb)}\bm{1}_{B_1}}_{\cF}\notag\\
			\lesssim&\oo(1)\normf{\mathcal{T}_n\varphi}=\oo(1)(\normf{\Dr\varphi}+\oo(1))=\oo(1),\label{eq:B.3.1}
		\end{align}
		where in the second step we used Lemma \ref{lemma:interror} and in the last but one step we used \eqref{eq:5.3.0.10}.
		
		Next we proceed to control $\norm{(\mathcal{T}_n-\widetilde{\mathcal{T}}_n)\varphi\bm{1}_{B_1^\complement}}_{\cF}$. Since $\normf{\mathcal{T}_n\varphi}<\infty$, recalling \eqref{eq:bulk}, we have that 
		\begin{align}
			\normf{\mathcal{T}_n\varphi\bm{1}_{B_1^\complement}}=\oo(1).\label{B.3.2}
		\end{align}
		Recall $i_0$ in \eqref{eq:i0} and $\norm{\cdot}_{\Sigma}$ in \eqref{eq:sigmanorm}. On the other hand, uniformly for $\xb\notin B_1$ and $\yb$ such that $\normp{\xb-\yb}\le r_n$, we have that 
		\begin{align}
			|\varphi(\xb)|,|\varphi(\yb)|\lesssim\prod_{i=1}^{d}|x_{i_0}/\sigma_{i_0}|^{k_i}=\OO(\norm{\xb}_{\Sigma}^{|\bm{k}|}),\notag
		\end{align}
		where in the first step we used that $|y_i/\sigma_i|\le|x_i/\sigma_i|+\OO(r_n)\lesssim|x_{i_0}/\sigma_{i_0}|$.
		Therefore, we have for $\xb\notin B_1$,
		\begin{align}
			|\widetilde{\mathcal{T}}_n\varphi(\xb)|\le\frac{2m_0}{m_2r_n^2}\left(|\varphi(\xb)|+\max_{\yb:\normp{\xb-\yb}\le r_n}|\varphi(\yb)|\right)\lesssim r_n^{-2}\norm{\xb}_{\Sigma}^{|\bm{k}|}.\label{eq:B.3.3}
		\end{align}
		Consequently,
		\begin{align*}
			\normb{\widetilde{\mathcal{T}}_n\varphi\bm{1}_{B_1^\complement}}_\cF^2\lesssim& r_n^{-4}\int_{\xb\notin B_1}\norm{\xb}_{\Sigma}^{2|\bm{k}|}\varrho^2(\xb)\dd\xb\\
			\lesssim&r_n^{-4}\int_{\xb\notin B_1}\norm{\xb}_{\Sigma}^{2|\bm{k}|}\varrho(\xb)\exp\left(-\frac{x_{i_0}^2}{2\sigma_{i_0}^2}\right)\dd\xb\\
			\le&\frac{1}{n^{1+\beta}r_n^4}\int_{\R^d} \norm{\xb}_{\Sigma}^{2|\bm{k}|}\varrho(\xb)\dd\xb=\oo(1),
		\end{align*}
		where in the first step we used \eqref{eq:B.3.3} and in the last step we used \eqref{eq:2.2}. Combining with \eqref{eq:B.3.1}, we complete the proof.
	\end{proof}
	
	Recall $N_2$ in \eqref{eq:N1N2}, $D_2$ in \eqref{eq:D1D2}, $w_j$ in \eqref{eq:wj}, $v_j$ in \eqref{eq:vj} and $B_1$ as in \eqref{eq:bulk}. In the following lemma, we provide some useful properties about the above quantities, which will be used in the proof of the variance part of Proposition \ref{prop:pointwise} in Section \ref{sec:prop}.
	\begin{lemma}
		\label{lemma:case3}
		Under the assumptions of Theorem \ref{theo:normald=1}, if $\alpha_n\gg n^{7/2}$, then uniformly for $\xb\in B_1$, we have the following:
		\begin{flalign}
			\text{(i)}&\quad N_2 = \varrho(\xb)\,\OO\!\left(r_n^{d+1}\log^{(|\bm{k}|-1)/2} n\right), && \label{eq:B.4.0.1}\\
			\text{(ii)}&\quad \E w_j - N_2 = \varrho(\xb)\,\OO\!\left(\tfrac{r_n^{d-1}\log^{(|\bm{k}|-1)/2} n}{\alpha_n}\right), && \label{eq:B.4.0.2}\\
			\text{(iii)}&\quad \E w_j^2 = \varrho(\xb)\,\OO\!\left(r_n^{d+2}\log^{|\bm{k}|-1} n\right), && \label{eq:B.4.0.3}\\
			\text{(iv)}&\quad \E v_j - D_2 = \varrho(\xb)\,\OO\!\left(\tfrac{r_n^{d-2}}{\alpha_n}\right), && \label{eq:B.4.0.4}\\
			\text{(v)}&\quad \E v_j^2 = \varrho(\xb)\,\OO\!\left(r_n^d\right). && \label{eq:B.4.0.5}
		\end{flalign}
	\end{lemma}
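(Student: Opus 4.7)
\medskip
\noindent\textbf{Proof proposal.}\; The five estimates share a common set of ingredients, and I plan to establish them in parallel by isolating the following three building blocks. First, polynomial growth on the bulk: since $\varphi$ is a product of physicist's Hermite polynomials of total degree $|\bm{k}|$, the coordinate bound $|x_i|\lesssim \sigma_i\sqrt{\log n}$ valid on $B_1$ yields $|\varphi(\xb)|\lesssim \log^{|\bm{k}|/2}n$ and $|\partial_i\varphi(\xb)|\lesssim \log^{(|\bm{k}|-1)/2}n$, and these bounds persist (up to multiplicative constants) at any $\yb$ with $\normp{\xb-\yb}\le r_n$ because $r_n=\oo(1)$. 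Second, near-constancy of the Gaussian weight: for $\xb\in B_1$ and $\normp{\xb-\yb}\le r_n$, one has $|\sum_i x_i(y_i-x_i)/\sigma_i^2|\lesssim r_n\sqrt{\log n}=\oo(1)$, so $\varrho(\yb)=\varrho(\xb)(1+\oo(1))$. Third, sharpness of the sigmoid: setting $\Delta_n(t)\coloneq \mathsf{s}(\alpha_n(r_n^2-t^2))-\bm{1}(t\le r_n)$, we have $|\Delta_n(t)|\le \exp(-\alpha_n|r_n^2-t^2|)$, so $\Delta_n$ is supported (up to an error of order $\exp(-\alpha_n r_n^2)$, which is negligible when $\alpha_n\gg n^{7/2}$) in the radial transition shell $|t-r_n|\lesssim 1/(\alpha_n r_n)$, whose $d$-dimensional Lebesgue volume is $\OO(r_n^{d-2}/\alpha_n)$.

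\medskip
\noindent The direct estimates (i), (iii), (v) are then essentially volume computations. For (i), I perform the change of variables $\zb=(\yb-\xb)/r_n$ in $N_2$ and apply the mean value theorem to bound $|\varphi(\xb)-\varphi(\xb+r_n\zb)|\le r_n\cdot \OO(\log^{(|\bm{k}|-1)/2}n)$ uniformly on $\normp{\zb}\le 1$; combined with near-constancy of $\varrho$, this yields $|N_2|\lesssim r_n^{d+1}\varrho(\xb)\log^{(|\bm{k}|-1)/2}n$. For (v), the bounds $\mathsf{s}\le 1$ and boundedness of $g^*$ together with the exponential tail of $\mathsf{s}$ for $t\gg r_n$ reduce the integral to the effective region $\{\yb:\normp{\xb-\yb}\le (1+\oo(1))r_n\}$ of volume $\OO(r_n^d)$, against $\varrho(\yb)\lesssim \varrho(\xb)$. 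Estimate (iii) is then (v) with the extra factor $(\varphi(\xb)-\varphi(\yb))^2\lesssim r_n^2\log^{|\bm{k}|-1}n$ coming from the MVT bound in the effective region.

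\medskip
\noindent The main content of the lemma is the differences (ii) and (iv), where the cancellation between $\mathsf{s}$ and the indicator must be exploited. I plan to use $g^*\equiv g$ on $[0,1]$ to write, up to the overall normalization of the Gaussian density,
\[
\E v_j-D_2 \;=\; \int \Delta_n(\normp{\xb-\yb})\,g^*(\normp{\xb-\yb}/r_n)\,\varrho(\yb)\,\dd\yb \;+\; \text{error from }\{\normp{\xb-\yb}>r_n\},
\]
and analogously with the factor $(\varphi(\xb)-\varphi(\yb))$ inserted for $\E w_j-N_2$. On the transition shell $|t-r_n|\lesssim 1/(\alpha_n r_n)$, the factor $\varrho(\yb)=\varrho(\xb)(1+\oo(1))$ comes outside, $g^*$ is uniformly bounded, and the Lebesgue volume of the shell is $\OO(r_n^{d-2}/\alpha_n)$, which directly yields (iv). For (ii), the additional bound $|\varphi(\xb)-\varphi(\yb)|\lesssim r_n\log^{(|\bm{k}|-1)/2}n$ on the shell multiplies the above by the corresponding factor, producing the claimed $\varrho(\xb)\OO(r_n^{d-1}\log^{(|\bm{k}|-1)/2}n/\alpha_n)$.

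\medskip
\noindent The principal technical obstacle is the careful passage from $\Delta_n$ to a shell-volume estimate in a general $\ell_p$ ball, since the level sets $\{\normp{\xb-\yb}=t\}$ are not spheres for $p\ne 2$. The plan is to introduce the radial coordinate $t=\normp{\xb-\yb}$ and use the co-area-type formula $\int f(t)\,\dd\yb=\int_0^\infty f(t)\,\mathcal{S}_{d,p}(t)\,\dd t$ with surface measure $\mathcal{S}_{d,p}(t)\asymp t^{d-1}$, so that the transition-shell mass reduces to $\int|\Delta_n(t)|\,t^{d-1}\,\dd t\asymp r_n^{d-1}/(\alpha_n r_n)=r_n^{d-2}/\alpha_n$. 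The tails $|t-r_n|\gg 1/(\alpha_n r_n)$ contribute at most $\exp(-c\alpha_n r_n/(\alpha_n r_n))=\oo(1/\alpha_n^{\text{large}})$, and the condition $\alpha_n\gg n^{7/2}$ is enough to dominate every polynomial-in-$n$ prefactor encountered, including the $\log^{(|\bm{k}|-1)/2}n$ and $\varrho(\xb)$-free factors. Collecting everything gives the five claimed bounds uniformly for $\xb\in B_1$.
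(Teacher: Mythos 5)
Your proposal follows essentially the same route as the paper: MVT applied to the Hermite product to get $|\varphi(\xb)-\varphi(\yb)|\lesssim r_n\log^{(|\bm{k}|-1)/2}n$ on $B_1$, near-constancy of $\varrho$ on balls of radius $r_n$ in the bulk (Lemma \ref{lemma:interror}), and exploitation of the sharp sigmoid transition for the difference estimates (ii) and (iv). Your ``transition-shell'' presentation is equivalent to the paper's explicit three-region split into $R_1=\{t\le r_n\}$, $R_2=\{r_n<t\le2r_n\}$, $R_3=\{t>2r_n\}$: in both cases the radial integral $\int t^{d-1}\exp(-\alpha_n|t^2-r_n^2|)\,\dd t$ is estimated by linearizing $|t^2-r_n^2|\gtrsim r_n|t-r_n|$ near $t=r_n$, which produces the $r_n^{d-2}/\alpha_n$ scaling.

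One small slip worth fixing: the tail estimate ``$\exp(-c\alpha_n r_n/(\alpha_n r_n))=\oo(1/\alpha_n^{\text{large}})$'' as written evaluates to $\exp(-c)$, a constant, and so does not establish smallness. The correct statement (and what the paper does in its $R_3$ computation) is that the contribution from $t>2r_n$ is bounded by $\exp(-c\alpha_n r_n^2)$, and since $\alpha_n\gg n^{7/2}$ while $r_n^2\gg n^{-2/(d+4)+2\epsilon}$ by \eqref{eq:2.2}, the product $\alpha_n r_n^2$ grows faster than any power of $n$, making that tail super-polynomially small. With that repair, your argument and the paper's coincide.
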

	\begin{proof}
		We prove the five statements in order.
		\begin{enumerate}
			\item Recall $\varphi(\xb)=C_\varphi\prod_{i=1}^d H_{k_i}(x_i/\sigma_i)$. For $\xb\in B_1$,  $\normp{\xb-\yb}\le r_n$ and sufficiently large $n$, we have that $|x_i/\sigma_i|,|y_i/\sigma_i|\le\sqrt{2(1+2\beta)\log n}\lesssim\sqrt{\log n}$ for $i=1,\cdots,d$. Therefore,
			\begin{align}
				|\varphi(\xb)-\varphi(\yb)|&=|C_\varphi|\left|\sum_{i=1}^d\left(H_{k_i}\left(\frac{x_i}{\sigma_i}\right)-H_{k_i}\left(\frac{y_i}{\sigma_i}\right)\right)\prod_{j<i}H_{k_j}\left(\frac{x_j}{\sigma_j}\right)\prod_{j>i}H_{k_j}\left(\frac{y_j}{\sigma_j}\right)\right|\notag\\
				&\le|C_\varphi|\sum_{i=1}^d \frac{|x_i-y_i|}{\sigma_i}2k_i\max_{|t|\le\sqrt{2(1+2\beta)\log n}}|H_{k_i-1}(t)|\left|\prod_{j<i}H_{k_j}\left(\frac{x_j}{\sigma_j}\right)\prod_{j>i}H_{k_j}\left(\frac{y_j}{\sigma_j}\right)\right|\notag\\
				&\lesssim r_n \log^{(|\bm{k}|-1)/2} n,\label{eq:B.4.1}
			\end{align}
			where in the first step we used \eqref{eq:prod} and in the second step we used mean value theorem and \eqref{eq:C3.2.1}. Then we have that
			\begin{align*}
				|N_2|&=\left|\int_{\normp{\xb-\yb}\le r_n}(\varphi(\xb)-\varphi(\yb))g(\normp{\xb-\yb}/r_n)\varrho(\yb)\dd\yb\right|\\
				&\lesssim r_n \log^{(|\bm{k}|-1)/2} n\int_{\normp{\xb-\yb}\le r_n}\varrho(\yb)\dd\yb\\
				&\lesssim r_n^{d+1}\log^{(|\bm{k}|-1)/2}\varrho(\xb).
			\end{align*}
			where in the second step we used \eqref{eq:B.4.1} and the boundedness of $g(\cdot)$ and in the last step we used Lemma \ref{lemma:interror}. This concludes \eqref{eq:B.4.0.1}
			
			\item We split $\R^d$ into the following regions based on $\xb$,
			\begin{align}
				R_1\coloneq\{\yb:\normp{\xb-\yb}\le r_n\},\ R_2\coloneq\{\yb:r_n<\normp{\xb-\yb}\le 2r_n\},\text{ and }R_3\coloneq\{\yb:\normp{\xb-\yb}> 2r_n\}.\label{eq:xysplit}
			\end{align}
			Now we discuss the difference on these three regions. For $R_1$,
			\begin{align}
				&\left|N_2-\int_{R_1}\mathsf{s}(\alpha_n(r_n^2-\normp{\xb-\yb}^2))g^*(\normp{\xb-\yb}/r_n)(\varphi(\xb)-\varphi(\yb))\varrho(\yb)\dd\yb\right|\notag\\
				=&\left|\int_{R_1}(1-\mathsf{s}(\alpha_n(r_n^2-\normp{\xb-\yb}^2)))g(\normp{\xb-\yb}/r_n)(\varphi(\xb)-\varphi(\yb))\varrho(\yb)\dd\yb\right|\notag\\
				\lesssim&r_n\log^{(|\bm{k}|-1)/2}n\int_{R_1}\frac{1}{1+\exp(\alpha_n|r_n^2-\normp{\xb-\yb}^2|)}\varrho(\yb)\dd\yb\notag\\
				\lesssim&r_n\log^{(|\bm{k}|-1)/2}n\varrho(\xb)\int_{\normp{\xb-\yb}\le r_n}\exp(\alpha_n(\normp{\xb-\yb}^2-r_n^2))\dd\yb\notag\\
				\asymp&r_n\log^{(|\bm{k}|-1)/2}n\varrho(\xb)\int_{0}^{r_n}t^{d-1}\exp(\alpha_n(t^2-r_n^2))\dd t\notag\\
				\le&r_n\log^{(|\bm{k}|-1)/2}n\varrho(\xb)r_n^{d-1}\int_{0}^{r_n}\exp(-\alpha_nr_n(r_n-t))\dd t\notag\\
				=&r_n\log^{(|\bm{k}|-1)/2}n\varrho(\xb)r_n^{d-1}(1-e^{-\alpha_nr_n^2})/(r_n\alpha_n)\notag\\
				\le&\varrho(\xb)r_n^{d-1}\log^{(|\bm{k}|-1)/2}n/\alpha_n,\label{eq:B.4.2}
			\end{align}
			where in the first step we used our assumption that $g(t)=g^*(t)$ for $t\in[0,1]$, in the second step we used Lemma \ref{lemma:sig2ind} and \eqref{eq:B.4.1}, the third step is from \eqref{eq:denstiyratio}, in the fourth step we applied a change of variable with $t=\normp{\xb-\yb}$. Next, for $R_2$, similar to \eqref{eq:B.4.2}, we have that
			\begin{align}
				&\left|\int_{R_2}\mathsf{s}(\alpha_n(r_n^2-\normp{\xb-\yb}^2))g^*(\normp{\xb-\yb}/r_n)(\varphi(\xb)-\varphi(\yb))\varrho(\yb)\dd\yb\right|\notag\\
				\lesssim&r_n\log^{(|\bm{k}|-1)/2}n\int_{R_2}\exp(-\alpha_n(\normp{\xb-\yb}^2-r_n^2))\varrho(\yb)\dd\yb\notag\\
				\lesssim&r_n\log^{(|\bm{k}|-1)/2}n\varrho(\xb)\int_{r_n<\normp{\xb-\yb}\le 2r_n}\exp(-\alpha_n(\normp{\xb-\yb}^2-r_n^2))\dd\yb\notag\\
				\asymp&r_n\log^{(|\bm{k}|-1)/2}n\varrho(\xb)\int_{r_n}^{2r_n}t^{d-1}\exp(-\alpha_n(t^2-r_n^2))\dd t\notag\\
				\lesssim&\alpha_n^{-1}r_n^{d-1}\log^{(|\bm{k}|-1)/2}n\varrho(\xb).\label{eq:B.4.3}
			\end{align}
			Moreover, for $R_3$, we have that 
			\begin{align}
				&\left|\int_{R_3}\mathsf{s}(\alpha_n(r_n^2-\normp{\xb-\yb}^2))g^*(\normp{\xb-\yb}/r_n)(\varphi(\xb)-\varphi(\yb))\varrho(\yb)\dd\yb\right|\notag\\
				\lesssim&r_n\log^{(|\bm{k}|-1)/2}n\int_{\normp{\xb-\yb}>2r_n}\exp(-\alpha_n(\normp{\xb-\yb}^2-r_n^2))\varrho(\yb)\dd\yb\notag\\
				\lesssim&r_n\log^{(|\bm{k}|-1)/2}n\int_{\normp{\xb-\yb}>2r_n}\exp(-3\alpha_nr_n^2)\varrho(\yb)\dd\yb\notag\\
				\lesssim&r_n\log^{(|\bm{k}|-1)/2}n\exp(-3\alpha_nr_n^2),\label{eq:B.4.4}
			\end{align}
			where in the first step we used \eqref{eq:B.4.1}. Combining \eqref{eq:B.4.2}, \eqref{eq:B.4.3} and \eqref{eq:B.4.4}, we have that 
			\begin{align*}
				|\E w_j-N_2|&=\varrho(\xb)\OO(r_n^{d-1}\log^{(|\bm{k}|-1)/2}n/\alpha_n)+\OO(r_n\log^{(|\bm{k}|-1)/2}n\exp(-3\alpha_nr_n^2))\\
				&=\varrho(\xb)\OO(r_n^{d-1}\log^{(|\bm{k}|-1)/2}n/\alpha_n),
			\end{align*}
			where in the last step we used the assumption that $\alpha\gg n^{7/2}$ and \eqref{eq:2.2}. This concludes \eqref{eq:B.4.0.2}.
			
			\item Recall our split of $\R^d$ in \eqref{eq:xysplit}. For $R_1$, similar to \eqref{eq:B.4.2}, we have that
			\begin{align}
				&\left|\int_{R_1}(1-\mathsf{s}^2(\alpha_n(r_n^2-\normp{\xb-\yb}^2)))(g^*(\normp{\xb-\yb}/r_n))^2(\varphi(\xb)-\varphi(\yb))^2\varrho(\yb)\dd\yb\right|\notag\\
				\le&\left|\int_{R_1}2(1-\mathsf{s}(\alpha_n(r_n^2-\normp{\xb-\yb}^2)))(g^*(\normp{\xb-\yb}/r_n))^2(\varphi(\xb)-\varphi(\yb))^2\varrho(\yb)\dd\yb\right|\notag\\
				\lesssim&r_n^2\log^{|\bm{k}|-1}n\int_{R_1}\frac{1}{1+\exp(\alpha_n|r_n^2-\normp{\xb-\yb}^2|)}\varrho(\yb)\dd\yb\notag\\
				\lesssim&r_n^2\log^{|\bm{k}|-1}n\varrho(\xb)r_n^{d-2}/\alpha_n=\varrho(\xb)\OO(r_n^d\log^{|\bm{k}|-1}/\alpha_n).\label{eq:B.4.5}
			\end{align}
			Next, similar to \eqref{eq:B.4.3} and \eqref{eq:B.4.4}, we have that
			\begin{align*}
				&\left|\int_{R_2}\mathsf{s}^2(\alpha_n(r_n^2-\normp{\xb-\yb}^2)))(g^*(\normp{\xb-\yb}/r_n))^2(\varphi(\xb)-\varphi(\yb))^2\varrho(\yb)\dd\yb\right|\\
				\le&\left|\int_{R_2}\mathsf{s}(\alpha_n(r_n^2-\normp{\xb-\yb}^2)))(g^*(\normp{\xb-\yb}/r_n))^2(\varphi(\xb)-\varphi(\yb))^2\varrho(\yb)\dd\yb\right|=\varrho(\xb)\OO(r_n^d\log^{|\bm{k}|-1}/\alpha_n),\\
			\end{align*}
			and
			\begin{align*}
				&\left|\int_{R_3}\mathsf{s}^2(\alpha_n(r_n^2-\normp{\xb-\yb}^2)))(g^*(\normp{\xb-\yb}/r_n))^2(\varphi(\xb)-\varphi(\yb))^2\varrho(\yb)\dd\yb\right|\\
				\le&\left|\int_{R_3}\mathsf{s}(\alpha_n(r_n^2-\normp{\xb-\yb}^2)))(g^*(\normp{\xb-\yb}/r_n))^2(\varphi(\xb)-\varphi(\yb))^2\varrho(\yb)\dd\yb\right|=\OO(r_n^2\log^{|\bm{k}|-1}\exp(-6\alpha_nr_n^2)).
			\end{align*}
			Combining with \eqref{eq:B.4.5}, we have that
			\begin{align*}
				\E w_j^2&\le\int_{\normp{\xb-\yb}\le r_n}(g^*(\normp{\xb-\yb}/r_n))^2(\varphi(\xb)-\varphi(\yb))^2\varrho(\yb)\dd\yb+\varrho(\xb)\OO(r_n^d\log^{|\bm{k}|-1}n/\alpha_n)\\
				&\lesssim \varrho(\xb)r_n^2\log^{|\bm{k}|-1}n\int_{\normp{\xb-\yb}\le r_n}
				\dd\yb+\varrho(\xb)\OO(r_n^d\log^{|\bm{k}|-1}n/\alpha_n)\\
				&=\varrho(\xb)\OO(r_n^{2+d}\log^{|\bm{k}|-1}n),
			\end{align*}
			where in the second step we used \eqref{eq:B.4.1}.
			This completes \eqref{eq:B.4.0.3}.
			
			\item Similar to the proof of \eqref{eq:B.4.0.2}, substituting $\varphi(\xb)-\varphi(\yb)$ with 1, we can conclude \eqref{eq:B.4.0.4}.
			
			\item Similar to the proof of \eqref{eq:B.4.0.3}, substituting $\varphi(\xb)-\varphi(\yb)$ with 1, we can conclude \eqref{eq:B.4.0.5}.
		\end{enumerate}
	\end{proof}
	
	Recall $\Lc$ in \eqref{eq:2.2.2} and $\sLn$ in \eqref{eq:2.2.3}. In the following lemma, we link the eigenvalues of $\Lc$ with $\sLn$, which will be used in the proof of \eqref{eq:main} in Section \ref{sec:mainproof}.
	\begin{lemma}
		\label{lemma:S2Sn}
		$\Lc$ and $\sLn$ have the same eigenvalues counting multiplicities except at $\frac{2m_0}{m_2r_n^2}$.
	\end{lemma}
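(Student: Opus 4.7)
The plan is to exploit the fact that $\sLn$ is a finite-rank perturbation of a multiple of the identity, which lets me reduce the eigenvalue problem for $\sLn$ on the infinite-dimensional space $\cF$ to that of $\Lc$ on $\R^n$. Set $c_n \coloneq \frac{2m_0}{m_2 r_n^2}$ and define the smooth kernel
$$K_n(\xb,\yb) \coloneq \mathsf{s}\!\left(\alpha_n(r_n^2-\normp{\xb-\yb}^2)\right) g^*(\normp{\xb-\yb}/r_n),$$
which is strictly positive on $\R^d\times\R^d$ by \eqref{eq:2.2.1}. Introduce the averaging operator
$$\mathcal{A}_n f(\xb) \coloneq \frac{\sum_{j=1}^n K_n(\xb,\xb_j)\,f(\xb_j)}{\sum_{j=1}^n K_n(\xb,\xb_j)},$$
so that $\sLn = c_n(\mathcal{I}-\mathcal{A}_n)$ on $\cF$ and $\Lc = c_n(\Ib - \Da^{-1}\Ka)$. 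The key observation is that $\mathcal{A}_n f$ depends on $f$ only through the evaluation vector $\pi f \coloneq (f(\xb_1),\ldots,f(\xb_n))^\top$, and at the sample points $\mathcal{A}_n f(\xb_i) = (\Da^{-1}\Ka\,\pi f)_i$.

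Based on this, I would construct, for every $\lambda \ne c_n$, mutually inverse linear maps between $\ker(\Lc - \lambda \Ib)$ and $\ker(\sLn - \lambda \mathcal{I})$. One direction is the restriction $\pi$; in the other, I define the extension
$$\phi_\lambda(\mathbf{u})(\xb) \coloneq \frac{c_n}{c_n-\lambda}\cdot\frac{\sum_{j=1}^n K_n(\xb,\xb_j)\,u_j}{\sum_{j=1}^n K_n(\xb,\xb_j)}.$$
Because $\phi_\lambda(\mathbf{u})$ is a scalar multiple of a convex combination of the entries of $\mathbf{u}$, it is uniformly bounded on $\R^d$ and therefore belongs to $\cF$ (as $\varrho^2$ is integrable). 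A short direct computation using the matrix eigenvalue identity $\Da^{-1}\Ka\mathbf{u} = \tfrac{c_n-\lambda}{c_n}\mathbf{u}$ shows both $\pi\circ\phi_\lambda = \mathrm{id}$ on $\ker(\Lc-\lambda\Ib)$ and $\sLn\phi_\lambda(\mathbf{u}) = \lambda\phi_\lambda(\mathbf{u})$. Conversely, rearranging $\sLn f = \lambda f$ yields $f = \tfrac{c_n}{c_n-\lambda}\mathcal{A}_n f = \phi_\lambda(\pi f)$, which simultaneously gives $\phi_\lambda\circ\pi = \mathrm{id}$ on $\ker(\sLn-\lambda\mathcal{I})$ and, after evaluating at each $\xb_i$, shows $\Lc(\pi f) = \lambda\,\pi f$. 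Since $\phi_\lambda$ and $\pi$ are linear and mutually inverse, the geometric multiplicities at $\lambda$ coincide, which is what the lemma asserts.

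No substantive obstacle arises; this is essentially the linear-algebraic identification used in \cite[Proposition~9]{von2008consistency}. The one point requiring attention is well-definedness of $\mathcal{A}_n$ and $\phi_\lambda$ on all of $\R^d$, i.e.\ that $\sum_j K_n(\xb,\xb_j) > 0$ pointwise; this is immediate from the strict positivity of both the sigmoid $\mathsf{s}$ and $g^*$, which was precisely the reason we replaced the hard cutoff in $\Lb$ by the smoothed version $\Lc$ in \eqref{eq:2.2.1} (the unsmoothed empirical operator \eqref{eq:2.2.4} is not defined everywhere and would make the same construction fail). Finally, the eigenvalue $c_n$ must be excluded because $\sLn f = c_n f$ forces $\sum_j K_n(\xb,\xb_j)f(\xb_j) = 0$ for \emph{every} $\xb \in \R^d$, which is strictly stronger than the corresponding matrix condition $\Ka\,\pi f = 0$, so the kernels at $c_n$ need not have the same dimension.
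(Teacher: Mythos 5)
Your proof is correct and follows essentially the same route as the paper's, which also mirrors Proposition~9 of \cite{von2008consistency}: the paper evaluates the eigenfunction at the sample points to get the matrix equation, rewrites the eigenfunction via the same extension formula \eqref{eq:5.6.6}/\eqref{eq:5.6.4}, and argues two inequalities on multiplicities, which is exactly your restriction $\pi$ and extension $\phi_\lambda$ presented as mutually inverse isomorphisms. Your added remarks on why $\phi_\lambda(\mathbf u)\in\cF$ (boundedness plus integrability of $\varrho^2$) and why $c_n$ must be excluded are sensible but not a different argument, so this is the same proof in a slightly tidier packaging.
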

	The proof is modified from the proof of Proposition 9 in \cite{von2008consistency}.
	\begin{proof}
		Suppose $\lambda\ne\frac{2m_0}{m_2r_n^2}$ is an eigenvalue of $\sLn$ with corresponding eigenfunction $\varphi\not\equiv0$. We have that 
		\begin{align}
			\sLn\varphi(\xb)=\lambda\varphi(\xb),\text{ for }x\in\R^d.\label{eq:5.6.1}
		\end{align}
		Evaluating \eqref{eq:5.6.1} at $\xb_1,\xb_2,\cdots,\xb_n$ and recalling $\Ka$ in \eqref{eq:2.2.1}, we have
		\begin{align}
			\frac{2m_0}{m_2r_n^2}\frac{\frac{1}{n}\sum_{j=1}^n\Ka(\xb_i,\xb_j)(\varphi(\xb_i)-\varphi(\xb_j))}{\frac{1}{n}\sum_{j=1}^n\Ka(\xb_i,\xb_j)}=\lambda\varphi(\xb_i),\text{ for }i=1,\cdots,n.\label{eq:5.6.2}
		\end{align}
		Let $\rho\circ\varphi\coloneq(\varphi(\xb_1),\cdots,\varphi(\xb_n))^\top$. Then \eqref{eq:5.6.2} becomes
		\begin{align}
			\frac{2m_0}{m_2r_n^2}(\Ib-\Da^{-1}\Ka)(\rho\circ\varphi)=\lambda(\rho\circ\varphi).\label{eq:5.6.7}
		\end{align}
		Moreover, \eqref{eq:5.6.1} can be rewritten as
		\begin{align}
			\varphi(\xb)=\frac{1}{1-\frac{m_2r_n^2}{2m_0}\lambda}\frac{\frac{1}{n}\sum_{j=1}^n\Ka(\xb,\xb_j)\varphi(\xb_j)}{\frac{1}{n}\sum_{j=1}^n\Ka(\xb,\xb_j)}.\label{eq:5.6.6}
		\end{align}
		This indicates that $\varphi$ is uniquely determined by $\rho\circ\varphi$. Since $\varphi\not\equiv0$, \eqref{eq:5.6.6} also implies that $\rho\circ\varphi\ne\bm{0}$. Combining with \eqref{eq:5.6.7}, we have that $\lambda$ is an eigenvalue of $\Lc$ with eigenvector $\rho\circ\varphi$.
		Furthermore, let $\varphi_1,\cdots,\varphi_s$ be linearly independent eigenvectors of $\sLn$ corresponding to $\lambda$. We show the linear independence of $\rho\circ\varphi_1,\cdots,\rho\circ\varphi_s$ by contradiction. Assume there are some constants $c_1,\dots,c_l$ such that $\sum_{t=1}^l c_t(\rho\circ\varphi_t)=0$ and at least one $c_t\ne0$. Combining with \eqref{eq:5.6.6}, we have that $\sum_{t=1}^l c_t\varphi_t(\xb)=0$ for all $\xb\in\R^d$, which contradicts the linear independence of $\varphi_1,\cdots,\varphi_s$. Therefore, $\rho\circ\varphi_1,\cdots,\rho\circ\varphi_s$ are linearly independent eigenfunctions of $\Lc$ corresponding to $\lambda$. Consequently, the multiplicity of $\lambda$ of $\Lc$ is larger than or equal to that of $\sLn$.
		
		On the other hand, suppose $\lambda^*\ne\frac{2m_0}{m_2r_n^2}$ is an eigenvalue of $\Lc$ with eigenvector $\bm{v}=(v_1,\cdots,v_n)^\top$. We have that 
		\begin{align}
			\frac{2m_0}{m_2r_n^2}\left(v_i-\frac{\frac{1}{n}\sum_{j=1}^n\Ka(\xb_i,\xb_j)v_j}{\frac{1}{n}\sum_{j=1}^n\Ka(\xb_i,\xb_j)}\right)=\lambda^*v_i,\quad i=1,\cdots,n.\label{eq:5.6.5}
		\end{align}
		Denote 
		\begin{align}
			\varphi^*(\xb)\coloneq\frac{1}{1-\frac{m_2r_n^2}{2m_0}\lambda^*}\frac{\frac{1}{n}\sum_{j=1}^n\Ka(\xb,\xb_j)v_j}{\frac{1}{n}\sum_{j=1}^n\Ka(\xb,\xb_j)}.\label{eq:5.6.4}
		\end{align}
		Combining with \eqref{eq:5.6.5}, we have that for $i=1,\cdots,n$, $\varphi^*(\xb_i)=v_i$. Substituting $v_i$ with $\varphi^*(\xb_i)$ in \eqref{eq:5.6.4}, we have that
		\begin{align*}
			\frac{2m_0}{m_2r_n^2}\left(\varphi^*(\xb)-\frac{\frac{1}{n}\sum_{j=1}^n\Ka(\xb,\xb_j)\varphi^*(\xb_j)}{\frac{1}{n}\sum_{j=1}^n\Ka(\xb,\xb_j)}\right)=\lambda^*\varphi^*(\xb),\quad \xb\in \R^d,
		\end{align*}
		which indicates that $\lambda^*$ is an eigenvalue of $\sLn$ with eigenfunction $\varphi^*$ in \eqref{eq:5.6.4}. Moreover, if $\bm{v}_1,\bm{v}_2,\cdots,\bm{v}_l$ are linearly independent eigenvectors of $\Lc$ corresponding to $\lambda^*$, the eigenfunctions $\varphi^*_1,\cdots,\varphi^*_l$ constructed in the form of \eqref{eq:5.6.4} are also linearly independent. Consequently, the multiplicity of $\lambda^*$ of $\sLn$ is larger than or equal to that of $\Lc$.

		Combining all above discussion, we conclude that $\Lc$ and $\sLn$ have the same multiplicities for all eigenvalues not equal to $\frac{2m_0}{m_2r_n^2}$. This completes our proof.
	\end{proof}

	\section{Auxiliary lemmas}
	\paragraph*{Matrix preliminaries.}
	Let $\mathbf{A}=(a_{ij})$ be an $n\times n$ complex matrix and $\norm{\cdot}$ be the operator norm. Then,
	\begin{align}
		\norm{\mathbf{A}}\le\sqrt{n}\max_{1\le i\le n}\sum_{j=1}^n|a_{ij}|.\label{eq:matrixnorm}
	\end{align}
	Denote the eigenvalues of $\mathbf{A}$ by $\{\lambda_i\}_{i=1}^n$. Then we have from Section 2.2 in \cite{gray2006toeplitz} that
	\begin{align}
		\frac{1}{n}\sum_{i=1}^n|\lambda_i|^2\le\frac{1}{n}\sum_{i=1}^n\sum_{j=1}^n|a_{ij}|^2.\label{eq:HSnorm}
	\end{align}
	
	\paragraph*{Resolvent identity.}
	Recall that for a closed linear operator $\mathcal{K}$, $R(z,\mathcal{K})=(\mathcal{K}-z\mathcal{I})^{-1}$ for $z\notin\operatorname{spec}(\mathcal{K})$ is denoted as its resolvent. Let $\mathcal{K}_1,\mathcal{K}_2$ be closed linear operators, $z\in\C$ and $z\notin\operatorname{spec}(\mathcal{K}_1)\cup\operatorname{spec}(\mathcal{K}_2)$. Then
	\begin{align}
		R(z,\mathcal{K}_1)-R(z,\mathcal{K}_2)=R(z,\mathcal{K}_1)(\mathcal{K}_2-\mathcal{K}_1)R(z,\mathcal{K}_2).\label{eq:resolvent}
	\end{align}
	See Section 1.5 in \cite{de2009intermediate} for details.
	
	\paragraph*{Concentration inequality.}
	In the following lemma, we introduce a concentration inequality that controls how far the sum of independent Bernoulli random variables can deviate from its mean.
	\begin{lemma}
		\label{lemma:chernoff}
		Let $X_i\stackrel{\operatorname{i.i.d.}}{\sim}\operatorname{Ber}(p)$ for $i=1,\cdots,n$. Then for any $t\in(0,1)$,
		\begin{align*}
			\P\left(\left|\sum_{i=1}^n X_i-np\right|\ge tnp\right)\le2\exp\left(-\frac{t^2np}{3}\right).
		\end{align*}
	\end{lemma}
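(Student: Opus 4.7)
The plan is to establish the bound by the standard Chernoff (exponential Markov) argument, applied separately to the upper and lower tails and then combined by a union bound. Write $S_n=\sum_{i=1}^n X_i$, fix $\lambda>0$, and bound $\P(S_n\ge (1+t)np)\le e^{-\lambda(1+t)np}\,\E[e^{\lambda S_n}]$ by Markov's inequality. Using independence together with the pointwise inequality $1+x\le e^x$, I would estimate the moment generating function as
\[
\E[e^{\lambda S_n}]=\bigl(1-p+pe^{\lambda}\bigr)^n\le \exp\!\bigl(np(e^{\lambda}-1)\bigr).
\]
Optimizing in $\lambda$ by the choice $\lambda=\log(1+t)$ yields the Cramér-type bound $\P(S_n\ge (1+t)np)\le \exp(-np\,\phi_+(t))$, where $\phi_+(t)=(1+t)\log(1+t)-t$.

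For the lower tail, the same scheme with $\lambda<0$ and the choice $\lambda=\log(1-t)$ gives $\P(S_n\le (1-t)np)\le \exp(-np\,\phi_-(t))$, where $\phi_-(t)=(1-t)\log(1-t)+t$. A union bound then yields
\[
\P\bigl(|S_n-np|\ge tnp\bigr)\le \exp(-np\,\phi_+(t))+\exp(-np\,\phi_-(t)).
\]

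The remaining step, which is the only place where the constant $3$ in the exponent enters, is the elementary inequality $\phi_+(t)\ge t^2/3$ and $\phi_-(t)\ge t^2/2\ge t^2/3$ for $t\in(0,1)$. For $\phi_+$ I would verify this by considering $F(t)=\phi_+(t)-t^2/3$, computing $F(0)=F'(0)=0$ and $F''(t)=1/(1+t)-2/3\ge 0$ for $t\in(0,1)$, which yields $F(t)\ge 0$ on that interval. For $\phi_-$ the estimate $\phi_-(t)\ge t^2/2$ follows similarly from a Taylor expansion (or from the fact that $(1-t)\log(1-t)+t=\sum_{k\ge 2} t^k/(k(k-1))\ge t^2/2$). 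Combining these inequalities with the two tail bounds gives the claimed $2\exp(-t^2np/3)$.

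There is no substantive obstacle here; the argument is entirely routine. The only mildly technical point is keeping the constants sharp enough to recover the factor $1/3$, which is handled by the convexity computation above; any weaker lower bound on $\phi_\pm$ would only affect the constant in the exponent and not the form of the inequality used elsewhere in the paper.
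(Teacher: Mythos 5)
The paper does not prove this lemma; it simply cites Corollary~4.6 of Mitzenmacher and Upfal. Your proposal instead gives a self-contained proof via the standard exponential-moment (Chernoff) argument, which is essentially the argument that underlies the cited corollary, so the substance is the same even though the presentation differs: you derive the two Cram\'er-type tail bounds with rate functions $\phi_\pm$ and then reduce to the quadratic bound $t^2/3$, whereas the paper outsources all of this to the textbook.

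One technical point needs correction. You claim $F''(t)=\frac{1}{1+t}-\frac{2}{3}\ge 0$ for all $t\in(0,1)$, but this fails for $t>1/2$, where $F''(t)<0$. The conclusion $\phi_+(t)\ge t^2/3$ on $(0,1)$ is nonetheless true, and the argument is easily repaired: since $F(0)=F'(0)=0$, $F''>0$ on $(0,1/2)$ and $F''<0$ on $(1/2,1)$, the derivative $F'$ first increases from $0$ and then decreases, but $F'(1)=\log 2-\tfrac{2}{3}>0$, so $F'\ge 0$ throughout $(0,1)$ and hence $F\ge 0$ there. (Equivalently, one can use the sharper and globally convex inequality $\phi_+(t)\ge \frac{t^2}{2+2t/3}$, which gives $\phi_+(t)\ge t^2/3$ on $[0,1]$ directly.) With this repair the proof is complete and correct.
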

	\begin{proof}
		See Corollary 4.6 in \cite{mitzenmacher2017probability}.
	\end{proof}
	
	\paragraph*{Properties about physicist's Hermite polynomials.}
	In the following, we introduce some properties about physicist's Hermite polynomials that will be used in the proof of Proposition \ref{prop:pointwise}. See \cite{szeg1939orthogonal,olver2010nist} for more details. From Section 5.5 in \cite{szeg1939orthogonal}, for physicist's Hermite polynomial $H_m(x)$,
	\begin{align}
		H_m'(x)=2mH_{m-1}(x).\label{eq:C3.2.1}
	\end{align}
	Recall the multi-index notation in Section \ref{sec:prop}. In the following two lemmas, we show that both 
	$$\mathsf{H}_{\bm{k}}((\xb+\zb)/\bm{\sigma})-\mathsf{H}_{\bm{k}}(\xb/\bm{\sigma}),\quad \text{and}\quad\frac{\varrho(\xb+\zb)}{\varrho(\xb)},$$
	can be expanded into series in terms of physicist's Hermite polynomials. These two lemmas are crucial for Lemma \ref{lemma:expansion}.
	\begin{lemma}
		\label{lemma:hermitediff}
		Let $\mathsf{H}_{\bm{k}}(\xb/\bm{\sigma})=\prod_{i=1}^d H_{k_i}(x_i/\sigma_i)$. For any $\xb,\zb\in\R^d$,
		$$\mathsf{H}_{\bm{k}}((\xb+\zb)/\bm{\sigma})-\mathsf{H}_{\bm{k}}(\xb/\bm{\sigma})=\sum_{\substack{\bm{s}\le\bm{k},\\ \bm{s}\ne\bm{k}}}\prod_{i=1}^{d}\binom{k_i}{s_i}H_{s_i}(x_i/\sigma_i)(2z_i/\sigma_i)^{k_i-s_i}.$$
	\end{lemma}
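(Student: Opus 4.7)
The plan is to reduce the multi-dimensional identity to the product of $d$ univariate Hermite binomial expansions, then identify the omitted term. The key classical fact is the one-variable shift formula
\[
H_m(a+b)=\sum_{s=0}^{m}\binom{m}{s}H_s(a)(2b)^{m-s},
\]
which can be proved quickly from the generating function $\sum_{m\ge 0}H_m(u)t^m/m!=\exp(2ut-t^2)$ by writing $\exp(2(a+b)t-t^2)=\exp(2at-t^2)\cdot\exp(2bt)$ and matching coefficients of $t^m$, or equivalently from the operator identity $H_m(u)=(2u-\partial_u)^m\cdot 1$ together with the fact that $2b$ and $\partial_a$ commute when applied to polynomials in $a$.

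With the univariate identity in hand, I would first apply it to each factor of $\mathsf{H}_{\bm{k}}((\xb+\zb)/\bm{\sigma})=\prod_{i=1}^d H_{k_i}((x_i+z_i)/\sigma_i)$ with $a=x_i/\sigma_i$ and $b=z_i/\sigma_i$, obtaining
\[
\mathsf{H}_{\bm{k}}((\xb+\zb)/\bm{\sigma})=\prod_{i=1}^d\sum_{s_i=0}^{k_i}\binom{k_i}{s_i}H_{s_i}(x_i/\sigma_i)(2z_i/\sigma_i)^{k_i-s_i}.
\]
Then I would distribute the product into a single sum indexed by $\bm{s}=(s_1,\dots,s_d)$ with $\bm{0}\le\bm{s}\le\bm{k}$, which is a straightforward interchange.

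Finally, I would isolate the unique summand with $\bm{s}=\bm{k}$, in which every factor $(2z_i/\sigma_i)^{k_i-s_i}$ equals $1$ and the product of $\binom{k_i}{s_i}H_{s_i}(x_i/\sigma_i)$ collapses to $\mathsf{H}_{\bm{k}}(\xb/\bm{\sigma})$; transposing this term to the left gives the claimed identity. There is no real obstacle here once the univariate shift formula is established; the only care needed is in verifying the $\bm{s}=\bm{k}$ term indeed produces exactly $\mathsf{H}_{\bm{k}}(\xb/\bm{\sigma})$ and in writing the multi-index sum cleanly so that the omitted index $\bm{s}=\bm{k}$ is unambiguous.
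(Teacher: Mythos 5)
Your proposal is correct and follows essentially the same approach as the paper: establish the univariate Hermite binomial expansion $H_m(a+b)=\sum_{s\le m}\binom{m}{s}H_s(a)(2b)^{m-s}$, take the product over coordinates, distribute into a multi-index sum, and move the $\bm{s}=\bm{k}$ term to the left. The only (cosmetic) difference is how the univariate identity is obtained: the paper Taylor-expands $H_m(x+z)$ and applies $H_m'=2mH_{m-1}$ iteratively, while you derive it by factoring the generating function $\exp(2(a+b)t-t^2)=\exp(2at-t^2)\exp(2bt)$; both are standard and equivalent.
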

	\begin{proof} 
		Using Taylor's expansion and \eqref{eq:C3.2.1}, we have that for $x,z\in\R$,
		$$H_m(x+z)-H_m(x)=\sum_{j=1}^{m}\frac{z^j}{j!}H^{(j)}_m(x).$$
		Then, we have that 
		\begin{align}
			H_m(x+z)=\sum_{j=0}^m \frac{z^j}{j!}H^{(j)}_m(x)=\sum_{j=0}^m\binom{m}{j}H_j(x)(2z)^{m-j},\label{eq:C3.3}
		\end{align}
		where in the last step we used \eqref{eq:C3.2.1}. Recall that $\mathsf{H}_{\bm{k}}(\xb/\bm{\sigma})=\prod_{i=1}^d H_{k_i}(x_i/\sigma_i)$. Therefore,
		\begin{align}
			\mathsf{H}_{\bm{k}}((\xb+\zb)/\bm{\sigma})=\sum_{\substack{\bm{s}\le \bm{k}}}\prod_{i=1}^{d}\binom{k_i}{s_i}H_{s_i}(x_i/\sigma_i)(2z_i/\sigma_i)^{k_i-s_i},\notag
		\end{align}
		which completes the proof.
	\end{proof}
	Recall $\varrho(\xb)$ in \eqref{eq:2.1.4}. The following lemma shows that $\varrho$ could be expanded into an infinite series in terms of physicist's Hermite polynomials.
	\begin{lemma}
		\label{lemma:hermitegenerating}
		For $\xb,\zb\in\R^d$, we have that
		\begin{align}
			\frac{\varrho(\xb+\zb)}{\varrho(\xb)}=\exp\left(-\sum_{i=1}^d\frac{z_i^2}{4\sigma_i^2}\right)\sum_{\alpha_1,\cdots,\alpha_d\in\N}\prod_{i=1}^{d}\frac{H_{\alpha_i}(x_i/\sigma_i)}{\alpha_i!}\left(-\frac{z_i}{2\sigma_i}\right)^{\alpha_i}.\label{eq:C3.1}
		\end{align}
	\end{lemma}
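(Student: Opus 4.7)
The plan is to reduce the identity to the classical generating function for physicist's Hermite polynomials, namely
\[
\sum_{n=0}^{\infty}\frac{H_n(x)}{n!}\,t^n=\exp(2xt-t^2),\qquad x,t\in\R,
\]
which converges for all real $t$ (see, e.g., Section 18.12 of \cite{olver2010nist}). Since the claimed expansion is a product over $i=1,\dots,d$, the $d$-dimensional problem factors into $d$ one-dimensional identities, and only a direct computation is needed.

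First, I would compute the left-hand side explicitly from the definition \eqref{eq:2.1.4}. Expanding $(x_i+z_i)^2=x_i^2+2x_iz_i+z_i^2$ gives
\[
\frac{\varrho(\xb+\zb)}{\varrho(\xb)}=\exp\!\left(-\sum_{i=1}^{d}\frac{x_iz_i}{\sigma_i^2}-\sum_{i=1}^{d}\frac{z_i^2}{2\sigma_i^2}\right),
\]
which is the quantity we need to match.

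Next, apply the generating function with the substitution $x\mapsto x_i/\sigma_i$ and $t\mapsto -z_i/(2\sigma_i)$ in each coordinate. This yields
\[
\sum_{\alpha_i=0}^{\infty}\frac{H_{\alpha_i}(x_i/\sigma_i)}{\alpha_i!}\!\left(-\frac{z_i}{2\sigma_i}\right)^{\!\alpha_i}
=\exp\!\left(-\frac{x_iz_i}{\sigma_i^2}-\frac{z_i^2}{4\sigma_i^2}\right).
\]
Taking the product over $i=1,\dots,d$ and multiplying by the prefactor $\exp\bigl(-\sum_{i=1}^{d}z_i^2/(4\sigma_i^2)\bigr)$ on the right-hand side of \eqref{eq:C3.1} produces exactly the exponent computed above, proving the equality. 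Since each series converges for all real arguments, Fubini/Tonelli justifies forming the product of $d$ absolutely convergent sums and writing it as the displayed multi-index series.

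There is no real obstacle here: the statement is a routine consequence of the Hermite generating function, and the only bookkeeping is to verify that the quadratic exponents $-x_iz_i/\sigma_i^2$ and $-z_i^2/(4\sigma_i^2)$ add up correctly after combining the prefactor with the product of the generating-function values to recover $-z_i^2/(2\sigma_i^2)$.
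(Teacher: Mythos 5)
Your proof is correct and takes essentially the same route as the paper: both reduce coordinate-by-coordinate to the Hermite generating function $\sum_{n\ge 0}\tfrac{H_n(x)}{n!}t^n=\exp(2xt-t^2)$ with $x\mapsto x_i/\sigma_i$, $t\mapsto -z_i/(2\sigma_i)$, and then multiply over $i=1,\dots,d$. The only difference is that you spell out the exponent bookkeeping explicitly, which the paper leaves implicit.
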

	\begin{proof}
		From the generating function of Hermite polynomials \cite[Section 18.12]{olver2010nist}, we have that
		\begin{align}
			\frac{\exp(-(x_i+z_i)^2/(2\sigma_i^2))}{\exp(-x_i^2/(2\sigma_i^2))}=\exp\left(-\frac{z_i^2}{4\sigma_i^2}\right)\sum_{\alpha_i=0}^{\infty}H_{\alpha_i}(x_i/\sigma_i)\frac{(-z_i/(2\sigma_i))^{\alpha_i}}{\alpha_i!}.\label{eq:C3.2}
		\end{align}
		Recall that $\varrho(\xb)=\prod_{i=1}^d\exp(-x_i^2/(2\sigma_i^2))$. Multiplying \eqref{eq:C3.2} over $i=1,\cdots,d$, we conclude \eqref{eq:C3.1}.
	\end{proof}
	Recall $\norm{\cdot}_{\cF_i}$ in \eqref{eq:4.9.1}. In the following lemma, without loss of generality, we assume $\sigma_i=1$ with $\sigma_i$ as in \eqref{eq:4.9}.
	\begin{lemma}
		\label{lemma:hermitenielsen}
		For physicist's Hermite polynomials $H_k(x)$ and $H_j(x)$, we have 
		\begin{align*}
			\norm{H_kH_j}_{\cF_i}\le(\pi)^{1/4}\sqrt{(k+1)!j!}\left(2\sqrt{2}\right)^{k+j}.
		\end{align*}
	\end{lemma}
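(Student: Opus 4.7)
The plan is to compute $\norm{H_kH_j}_{\cF_i}^2=\int_\R H_k(x)^2H_j(x)^2 e^{-x^2}\,\mathrm{d}x$ exactly via the Hermite linearization formula and orthogonality, and then bound the resulting finite sum by a clean combinatorial estimate. First I would apply the identity
\[
H_k(x)H_j(x)=\sum_{l=0}^{k\wedge j}2^{l}\,l!\binom{k}{l}\binom{j}{l}H_{k+j-2l}(x),
\]
together with the orthogonality relation $\int_\R H_m H_n e^{-x^2}\,\mathrm{d}x=\sqrt{\pi}\,2^{n}n!\,\delta_{mn}$, which yields
\[
\norm{H_kH_j}_{\cF_i}^2=\sqrt{\pi}\,2^{k+j}\sum_{l=0}^{k\wedge j}c_l,\qquad c_l\coloneq (l!)^2\binom{k}{l}^2\binom{j}{l}^2(k+j-2l)!.
\]
Assume without loss of generality $k\le j$; the case $k>j$ is obtained by symmetry, since $\sqrt{(j+1)!k!}\le\sqrt{(k+1)!j!}$ when $k>j$. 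The task reduces to proving $\sum_l c_l\le 4^{k+j}(k+1)!\,j!$.

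The key step is the combinatorial bound $\binom{k+j-2l}{k-l}\le\binom{k+j}{k}$ for every $0\le l\le k\wedge j$. I would verify it by writing the ratio as a telescoping product,
\[
\frac{\binom{k+j-2l}{k-l}}{\binom{k+j}{k}}=\prod_{i=0}^{l-1}\frac{(k-i)(j-i)}{(k+j-2i)(k+j-2i-1)},
\]
and noting that with $a=k-i\ge 1,\ b=j-i\ge 1$, the elementary inequality $ab\le(a+b)(a+b-1)$ makes each factor at most one. Substituting $(k+j-2l)!=\binom{k+j-2l}{k-l}(k-l)!(j-l)!$ into $c_l$ and cancelling the $(l!)^2,\,((k-l)!)^2,\,((j-l)!)^2$ factors in the denominator against $(k!)^2(j!)^2/[(l!)^2((k-l)!)^2((j-l)!)^2]$ yields the very clean estimate
\[
c_l\le\binom{k}{l}\binom{j}{l}(k+j)!.
\]

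With this in hand, the finish is immediate: by Vandermonde's identity,
\[
\sum_{l=0}^{k\wedge j}c_l\le(k+j)!\sum_{l=0}^{k\wedge j}\binom{k}{l}\binom{j}{l}=(k+j)!\binom{k+j}{k}=\frac{((k+j)!)^{2}}{k!\,j!},
\]
and then the elementary bound $(k+j)!\le 2^{k+j}k!\,j!$ (i.e.\ $\binom{k+j}{k}\le 2^{k+j}$) gives $\sum_l c_l\le 4^{k+j}k!\,j!\le 4^{k+j}(k+1)!\,j!$. Consequently
\[
\norm{H_kH_j}_{\cF_i}^{2}\le\sqrt{\pi}\,2^{k+j}\cdot 4^{k+j}(k+1)!\,j!=\sqrt{\pi}\,(2\sqrt{2})^{2(k+j)}(k+1)!\,j!,
\]
and the lemma follows on taking square roots.

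The main obstacle is identifying a bound on $c_l$ that is sharp enough to close the estimate: using only crude substitutions such as $\binom{k}{l}\le 2^{k}$ and $(k+j-2l)!\le(k+j)!$ loses an extra factor of $2^{k+j}$ and produces a bound of order $4^{k+j}\sqrt{(k+1)!j!}$ instead of the required $(2\sqrt{2})^{k+j}\sqrt{(k+1)!j!}$. The inequality $\binom{k+j-2l}{k-l}\le\binom{k+j}{k}$ is precisely the small piece of slack that encodes the cancellation between the $(k+j-2l)!$ numerator and the $(k-l)!(j-l)!$ factors already sitting in the denominator of $c_l$, and it is what makes the whole argument fit exactly.
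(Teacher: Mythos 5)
Your proof is correct, and it takes a genuinely different (and in fact tighter) route than the paper. Both proofs start from the same linearization $H_kH_j=\sum_{l}2^l l!\binom{k}{l}\binom{j}{l}H_{k+j-2l}$ and the orthogonality of the $H_m$, producing
\[
\norm{H_kH_j}^2_{\cF_i}=\sqrt{\pi}\,2^{k+j}\sum_{l=0}^{k\wedge j}(l!)^2\binom{k}{l}^2\binom{j}{l}^2(k+j-2l)!.
\]
From here the paper bounds each summand crudely — $\binom{k}{l}^2\binom{j}{l}^2\le 4^{k+j}$, the multinomial inverse $\frac{(l!)^2(k+j-2l)!}{(k+j)!}\le 1$, and $\binom{k+j}{k}\le 2^{k+j}$ — and multiplies by the number of terms, $\le k+1$. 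But note that the paper's third displayed line silently drops the $2^{k+j}$ prefactor coming from orthogonality; if one carries it through and uses only those crude per-term bounds, one actually arrives at $\sqrt{\pi}(k+1)!j!\,16^{k+j}$, i.e.\ a norm bound of $\pi^{1/4}\sqrt{(k+1)!j!}\,4^{k+j}$ rather than the stated $(2\sqrt{2})^{k+j}$. Your proof resolves exactly this: keeping the $2^{k+j}$ honestly, you observe $c_l=\frac{(k!)^2(j!)^2\binom{k+j-2l}{k-l}}{(l!)^2(k-l)!(j-l)!}$ and exploit the sharper bound $\binom{k+j-2l}{k-l}\le\binom{k+j}{k}$ (proved via the factorwise inequality $ab\le(a+b)(a+b-1)$) to get $c_l\le\binom{k}{l}\binom{j}{l}(k+j)!$, then sum exactly by Vandermonde, $\sum_l\binom{k}{l}\binom{j}{l}=\binom{k+j}{k}$, rather than multiplying by the term count. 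This recovers the factor of $2^{k+j}$ that the paper's cruder estimates lose and genuinely delivers $\sum_l c_l\le 4^{k+j}k!j!$, so that $\norm{H_kH_j}^2_{\cF_i}\le\sqrt{\pi}\,8^{k+j}k!j!$, which is even slightly stronger than the claimed $\sqrt{\pi}\,8^{k+j}(k+1)!j!$ (the extra $k+1$ the paper picks up from counting terms is not needed, and your WLOG step on $k\le j$ is also unnecessary, since your final bound $4^{k+j}k!j!$ is symmetric). So the two proofs differ in the key combinatorial ingredient: the paper relies on coarse per-term bounds plus a term count, whereas you use $\binom{k+j-2l}{k-l}\le\binom{k+j}{k}$ together with a closed-form Vandermonde evaluation of the sum, which is what makes the stated constant come out exactly.
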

	\begin{proof}
		We have that
		\begin{align*}
			\norm{H_kH_j}^2_{\cF_i}&=\normbb{\sum_{r=0}^{\min(k,j)}2^rr!\binom{k}{r}\binom{j}{r}H_{k+j-2r}(x)}^2_{\cF_i}\notag\\
			&=\sum_{r=0}^{\min(k,j)}\left(2^rr!\binom{k}{r}\binom{j}{r}\right)^2(k+j-2r)!2^{k+j-2r}\sqrt{\pi}\notag\\
			&=\sqrt{\pi}k!j!\sum_{r=0}^{\min(k,j)}\left(\binom{k}{r}\binom{j}{r}\right)^2\frac{r!r!(k+j-2r)!}{(k+j)!}\frac{(k+j)!}{k!j!}\notag\\
			&\le\sqrt{\pi}k!j!\sum_{r=0}^{\min(k,j)}(2^{k+j})^2 2^{k+j}\le\sqrt{\pi}(k+1)!j!8^{k+j}\notag,
		\end{align*}
		where in the first step we used (18.18.23) in \cite{olver2010nist}, in the second step we used the orthogonality of $H_m$'s and in the fourth step we used the fact that $\binom{a}{b}\le 2^a$ for $a,b\in\N$ and $a\ge b$.
	\end{proof}
	
	\paragraph*{Some facts and lemmas used in Section \ref{sec:prop}.}
	Now we introduce some useful facts and lemmas which were used in the proof of Proposition \ref{prop:pointwise} in Section \ref{sec:prop}. Here we first introduce the telescoping product identity,
	\begin{align}
		\prod_{i=1}^da_i-\prod_{i=1}^d b_i=\sum_{i=1}^{d}(a_i-b_i)\prod_{j<i}a_j\prod_{j>i}b_j.\label{eq:prod}
	\end{align}
	In the following lemma, we provide a control over the coordinates of $\xb_1,\cdots,\xb_n$, which enables us to employ our truncation argument in Section \ref{sec:prop}. 
	\begin{lemma}
		\label{lemma:xj}
		Let $\xb_j=(x_{j1},\cdots,x_{jd})^\top\stackrel{\operatorname{i.i.d.}}{\sim}\mathcal{N}_d(\bm{0},\operatorname{diag}(\sigma_1^2,\cdots,\sigma_d^2))$, $j=1,\cdots,n$, with $\sigma_i>0$ for $i=1,\cdots,d$. Then, with probability $1-\oo(1)$, for all $j=1,\cdots,n$ and $i=1,\cdots,d$,
		\begin{align*}
			|x_{ji}/\sigma_i|\le\sqrt{2\log n}.
		\end{align*}
	\end{lemma}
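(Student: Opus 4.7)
The plan is to apply a sharp Gaussian tail bound coordinate-wise and then take a union bound over all $nd$ coordinates. Since $x_{ji}/\sigma_i \sim \mathcal{N}(0,1)$ by the diagonal structure of the covariance, the problem reduces to showing that the maximum of $nd$ i.i.d. standard Gaussians does not exceed $\sqrt{2\log n}$ with probability $1-\oo(1)$.

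The main technical point is that the naive sub-Gaussian bound $\P(|Z|\ge t)\le 2\exp(-t^2/2)$ is \emph{not} quite sharp enough: with $t=\sqrt{2\log n}$ it yields a per-coordinate probability of $2/n$, and a union bound over $nd$ coordinates gives $2d$, which does not vanish. To gain the extra factor needed, I would instead invoke the standard Mills-ratio upper bound
\[
\P(|Z|\ge t)\;\le\;\sqrt{\tfrac{2}{\pi}}\,\frac{1}{t}\exp(-t^2/2),\qquad t>0,
\]
which for $Z\sim\mathcal{N}(0,1)$ follows from integrating the Gaussian density against the monotone factor $s/t$. This is the same family of Gaussian tail estimates already cited in the paper (see the bounds used in \eqref{eq:5.3} and \eqref{eq:5.3a}).

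Applied with $t=\sqrt{2\log n}$, the Mills bound gives
\[
\P\!\left(\left|\tfrac{x_{ji}}{\sigma_i}\right|\ge\sqrt{2\log n}\right)\;\le\;\sqrt{\tfrac{2}{\pi}}\,\frac{1}{\sqrt{2\log n}}\,\frac{1}{n}\;=\;\frac{1}{n\sqrt{\pi\log n}}.
\]
Taking the union bound over all $j=1,\dots,n$ and $i=1,\dots,d$ (with $d$ fixed) yields
\[
\P\!\left(\max_{j,i}\left|\tfrac{x_{ji}}{\sigma_i}\right|\ge\sqrt{2\log n}\right)\;\le\;\frac{nd}{n\sqrt{\pi\log n}}\;=\;\frac{d}{\sqrt{\pi\log n}}\;=\;\oo(1),
\]
which is precisely the claimed statement. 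Thus there is no real obstacle: the only subtlety is making sure not to use the loose sub-Gaussian tail bound, which would fail at the critical threshold $t=\sqrt{2\log n}$; the $1/t$ improvement supplied by the Mills ratio is just enough to absorb the extra factor of $n$ coming from the union bound over the $n$ samples.
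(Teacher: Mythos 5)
Your proof is essentially identical to the paper's: both apply the Mills-ratio Gaussian tail bound $\P(|Z|\ge t)\le\frac{2}{t\sqrt{2\pi}}e^{-t^2/2}$ at $t=\sqrt{2\log n}$ and take a union bound over the $nd$ coordinates to get a failure probability of order $d/\sqrt{\pi\log n}=\oo(1)$. Your observation that the cruder bound $2e^{-t^2/2}$ would not suffice at this threshold is also the reason the paper invokes the sharper tail estimate, so there is no substantive difference in approach.
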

	\begin{proof}
		We have that 
		\begin{align*}
			\P\left(|x_{ji}/\sigma_i|\le\sqrt{2\log n},\forall i,j\right)&=1-\P\left(\exists i,j\text{ such that }|x_{ji}/\sigma_i|>\sqrt{2\log n}\right)\\
			&\ge 1-\sum_{i=1}^d\sum_{j=1}^n\P\left(|x_{ji}/\sigma_i|>\sqrt{2\log n}\right)\\
			&=1-nd\P\left(|Z|>\sqrt{2\log n}\right)\\
			&\ge1-nd\frac{2\exp(-\log n)}{\sqrt{2\log n}\sqrt{2\pi}}=1-\OO\left(\frac{1}{\sqrt{\log n}}\right),
		\end{align*}
		where $Z\sim\mathcal{N}(0,1)$, in the third step we used the fact that $x_{ji}/\sigma_i\sim\mathcal{N}(0,1)$ and in the last but one step we used the Gaussian tail bound \cite[Section 7.1]{feller1991introduction} that for $t>0$,
		$$\P(|Z|>t)\le\frac{2\exp(-t^2/2)}{t\sqrt{2\pi}}.$$
		This completes our proof.
	\end{proof}
	In the following lemma, we offer a control between the 0-1 kernel and sigmoid-type kernel, which will be used in the proof of Lemma \ref{lemma:S2L}. Recall $\mathsf{s}(\cdot)$ in \eqref{eq:2.2.1}.
	\begin{lemma}
		\label{lemma:sig2ind}
		For any $\alpha_n,r_n>0$, $g(\cdot)$ as in the assumptions of Theorem \ref{theo:normald=1} and $g^*(\cdot)$ as in \eqref{eq:2.2.1},
		\begin{align}
			\left|g(\normp{\xb-\yb}/r_n)\bm{1}(\normp{\xb-\yb}\le r_n)-g^*(\normp{\xb-\yb}/r_n)\mathsf{s}\left(\alpha_n(r_n^2-\normp{\xb-\yb}^2)\right)\right|\le\frac{\max_{t\ge 0}g^*(t)}{1+\exp(\alpha_n|r_n^2-\normp{\xb-\yb}^2|)}.\label{eq:A.6.1}
		\end{align}
	\end{lemma}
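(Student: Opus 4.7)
The plan is to proceed by a direct case analysis on whether $\normp{\xb-\yb}\le r_n$, taking advantage of the symmetry identity $1-\mathsf{s}(t) = \mathsf{s}(-t) = 1/(1+e^t)$ for the sigmoid. Set $s = \normp{\xb-\yb}$ for brevity, and note that $g \equiv g^*$ on $[0,1]$ by the definition of $g^*$ in \eqref{eq:2.2.1}.

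In the first case, $s \le r_n$, so $\bm{1}(s \le r_n) = 1$ and $g(s/r_n) = g^*(s/r_n)$. The left-hand side of \eqref{eq:A.6.1} becomes
\begin{equation*}
g^*(s/r_n)\bigl(1 - \mathsf{s}(\alpha_n(r_n^2-s^2))\bigr) = \frac{g^*(s/r_n)}{1+\exp(\alpha_n(r_n^2-s^2))},
\end{equation*}
and since $r_n^2 - s^2 \ge 0$, the exponent equals $\alpha_n|r_n^2-s^2|$. Bounding $g^*(s/r_n) \le \max_{t\ge 0} g^*(t)$ yields the claim. In the second case, $s > r_n$, so $\bm{1}(s \le r_n) = 0$ and the left-hand side becomes
\begin{equation*}
g^*(s/r_n)\,\mathsf{s}(\alpha_n(r_n^2-s^2)) = \frac{g^*(s/r_n)}{1+\exp(\alpha_n(s^2-r_n^2))},
\end{equation*}
and now $s^2 - r_n^2 = |r_n^2 - s^2|$, so again the bound follows.

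There is essentially no obstacle: the only things that need to be observed are (i) the fact that $g$ and $g^*$ agree on the relevant region of the indicator's support, so that the two terms can be combined into a clean expression inside $g^*$, and (ii) the identity $1 - \mathsf{s}(t) = 1/(1+e^t)$, which allows the two cases to be merged into a single bound with $|r_n^2 - s^2|$ in the exponent. The absolute value is what encodes the case split uniformly.
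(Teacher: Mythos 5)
Your proof is correct and follows essentially the same two-case argument as the paper (split on $\normp{\xb-\yb}\le r_n$ vs.\ $> r_n$, use $g\equiv g^*$ on $[0,1]$, apply the sigmoid identity, and bound $g^*$ by its supremum). The only cosmetic difference is that you invoke $1-\mathsf{s}(t)=\mathsf{s}(-t)=1/(1+e^t)$ directly, whereas the paper writes out the intermediate fraction $e^{-t}/(1+e^{-t})$ before simplifying; the content is identical.
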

	\begin{proof}
		We split our proof into two cases $\normp{\xb-\yb}\le r_n$ and $\normp{\xb-\yb}>r_n$.
		\par\vspace{.5\baselineskip}
		\noindent{\bf{Case 1.}}\; When $\normp{\xb-\yb}\le r_n$, since $\bm{1}(\normp{\xb-\yb}\le r_n)=1$ and $g(\normp{\xb-\yb}/r_n)=g^*(\normp{\xb-\yb}/r_n)$, we have that
		\begin{align*}
			&\left|g(\normp{\xb-\yb}/r_n)\bm{1}(\normp{\xb-\yb}\le r_n)-g^*(\normp{\xb-\yb}/r_n)\mathsf{s}\left(\alpha_n(r_n^2-\normp{\xb-\yb}^2)\right)\right|\\
			=&\left|g^*(\normp{\xb-\yb}/r_n)-\frac{g^*(\normp{\xb-\yb}/r_n)}{1+\exp\left(-\alpha_n(r_n^2-\normp{\xb-\yb}^2)\right)}\right|\\
			=&\frac{g^*(\normp{\xb-\yb}/r_n)\exp\left(-\alpha_n(r_n^2-\normp{\xb-\yb}^2)\right)}{1+\exp\left(-\alpha_n(r_n^2-\normp{\xb-\yb}^2)\right)}\le\frac{\max_{t\ge 0}g^*(t)}{1+\exp(\alpha_n|r_n^2-\normp{\xb-\yb}^2|)}.
		\end{align*}
		\par\vspace{.5\baselineskip}
		\noindent{\bf{Case 2.}}\; When $\normp{\xb-\yb}> r_n$, since $\bm{1}(\normp{\xb-\yb}\le r_n)=0$, we have that
		\begin{align*}
			&\left|g(\normp{\xb-\yb}/r_n)\bm{1}(\normp{\xb-\yb}\le r_n)-g^*(\normp{\xb-\yb}/r_n)\mathsf{s}\left(\alpha_n(r_n^2-\normp{\xb-\yb}^2)\right)\right|\\
			=&\frac{g^*(\normp{\xb-\yb}/r_n)}{1+\exp\left(-\alpha_n(r_n^2-\normp{\xb-\yb}^2)\right)}
			\le\frac{\max_{t\ge 0}g^*(t)}{1+\exp(\alpha_n|r_n^2-\normp{\xb-\yb}^2|)}.
		\end{align*}
		This completes our proof.
	\end{proof}
	Recall $B_1$ as in \eqref{eq:bulk}. In the following lemma, we provide a uniform approximation of
	$$\int_{\normp{\xb-\yb}\le r_n}g(\normp{\xb-\yb}/r_n)\varrho(\yb)\dd\yb.$$
	\begin{lemma}
		\label{lemma:interror}
		Uniformly for any $\xb\in B_1$, 
		$$\int_{\normp{\xb-\yb}\le r_n}g(\normp{\xb-\yb}/r_n)\varrho(\yb)\dd\yb=m_0r_n^d\varrho(\xb)(1+\oo(1))$$
	\end{lemma}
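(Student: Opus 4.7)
The plan is to reduce the statement to a uniform Taylor expansion of the Gaussian density ratio on the scale $r_n$, exploiting that $B_1$ has radius $\OO(\sqrt{\log n})$ and $r_n\sqrt{\log n}=\oo(1)$ under \eqref{eq:2.2}.

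First I would apply the change of variables $\yb=\xb+r_n\zb$, under which $\dd\yb=r_n^d\dd\zb$ and $\{\normp{\xb-\yb}\le r_n\}$ becomes $\{\normp{\zb}\le 1\}$. This rewrites the integral as
\begin{equation*}
\int_{\normp{\xb-\yb}\le r_n}g(\normp{\xb-\yb}/r_n)\varrho(\yb)\dd\yb
= r_n^d\,\varrho(\xb)\int_{\normp{\zb}\le 1} g(\normp{\zb})\,\frac{\varrho(\xb+r_n\zb)}{\varrho(\xb)}\dd\zb .
\end{equation*}
Since $\int_{\normp{\zb}\le 1} g(\normp{\zb})\dd\zb=m_0$ by \eqref{eq:mj}, the claim reduces to showing that the ratio $\varrho(\xb+r_n\zb)/\varrho(\xb)$ equals $1+\oo(1)$ uniformly in $\xb\in B_1$ and $\normp{\zb}\le 1$.

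The second step is the uniform density-ratio estimate. A direct computation with \eqref{eq:2.1.4} gives
\begin{equation*}
\frac{\varrho(\xb+r_n\zb)}{\varrho(\xb)}
=\exp\!\left(-\sum_{i=1}^d\frac{2r_n x_i z_i+r_n^2 z_i^{\,2}}{2\sigma_i^2}\right).
\end{equation*}
For every $1\le p\le\infty$, the inequality $|z_i|\le\normp{\zb}\le 1$ holds, so $|z_i|\le 1$ uniformly; for $\xb\in B_1$ (recall \eqref{eq:bulk}) we have $|x_i|\le\sigma_i\sqrt{2(1+\beta)\log n}$. Hence
\begin{equation*}
\left|\sum_{i=1}^d\frac{2r_n x_i z_i+r_n^2 z_i^{\,2}}{2\sigma_i^2}\right|
\lesssim r_n\sqrt{\log n}+r_n^2 = \oo(1),
\end{equation*}
where the last bound uses $r_n\ll n^{-\epsilon}$ from \eqref{eq:2.2}, so $r_n\sqrt{\log n}\to 0$. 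Therefore $\varrho(\xb+r_n\zb)/\varrho(\xb)=1+\oo(1)$ uniformly.

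Substituting this uniform expansion back into the integral, the boundedness of $g$ on $[0,1]$ lets me pull the $1+\oo(1)$ factor out, yielding
\begin{equation*}
\int_{\normp{\xb-\yb}\le r_n}g(\normp{\xb-\yb}/r_n)\varrho(\yb)\dd\yb
= r_n^d\,\varrho(\xb)\left(m_0+\oo(1)\int_{\normp{\zb}\le1}g(\normp{\zb})\dd\zb\right)
= m_0\,r_n^d\,\varrho(\xb)(1+\oo(1)),
\end{equation*}
as desired. There is no real obstacle here beyond tracking the uniformity; the only delicate point is that the cross term $r_n x_i z_i/\sigma_i^2$ is controlled by the combination $r_n\sqrt{\log n}=\oo(1)$, which is precisely why the bulk region $B_1$ was chosen with radius of order $\sqrt{\log n}$ in \eqref{eq:bulk} and why the upper bound $r_n\ll n^{-\epsilon}$ is imposed in \eqref{eq:2.2}.
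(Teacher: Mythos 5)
Your proof is correct and follows essentially the same route as the paper: change of variables $\yb=\xb+r_n\zb$, then a uniform bound on the density ratio $\varrho(\xb+r_n\zb)/\varrho(\xb)$ for $\xb\in B_1$ and $\normp{\zb}\le 1$. You are in fact slightly more explicit than the paper's own proof in spelling out why the exponent is $\oo(1)$ — namely that $|x_i|\lesssim\sqrt{\log n}$ on $B_1$ and $r_n\sqrt{\log n}+r_n^2=\oo(1)$ under \eqref{eq:2.2} — whereas the paper compresses this into the line $\exp\pb{-\sum_i|x_i|\OO(r_n)}=\exp(\oo(1))$.
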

	\begin{proof}
		Applying a change of variable that $\yb=\xb+r_n\zb$ gives
		\begin{align}
			\int_{\normp{\xb-\yb}\le r_n}g(\normp{\xb-\yb}/r_n)\varrho(\yb)\dd\yb=r_n^d\int_{\normp{\zb}\le1}g(\normp{\zb})\varrho(\xb+r_n\zb)\dd\zb.\label{eq:C.3.7.1}
		\end{align}
		Moreover, uniformly for all $\xb\in B_1$,
		\begin{align*}
			\varrho(\xb+r_n\zb)/\varrho(\xb)&=\exp\left(-\sum_{i=1}^d\frac{1}{2\sigma_i^2}((x_i+r_nz_i)^2-x_i^2)\right)\\
			&=\exp(-\sum_{i=1}^d |x_i|\OO(r_n))=\exp(\oo(1))=1+\oo(1),
		\end{align*}
		where in the third step we used \eqref{eq:bulk}. Note this also indicates that uniformly for $\xb$ such that $|x_i|\le C\log n$ for $i=1,\cdots,d$ and any large fixed constant $C>0$,
		\begin{align}
			\varrho(\xb+r_n\zb)=\varrho(\xb)(1+\oo(1)).\label{eq:denstiyratio}
		\end{align}
		Combining with \eqref{eq:C.3.7.1}, we have that
		\begin{align*}
			\int_{\normp{\xb-\yb}\le r_n}g(\normp{\xb-\yb}/r_n)\varrho(\yb)\dd\yb=r_n^dm_0\varrho(\xb)(1+\oo(1)).
		\end{align*}
		This completes the proof.
	\end{proof}

	\bibliographystyle{abbrv}
	\bibliographystyle{plain}
	\bibliography{references,aniso_bib,sensornonnull,ref_YH}

\begin{thebibliography}{10}

\bibitem{adhikari2022spectrum}
K.~Adhikari, R.~J. Adler, O.~Bobrowski, and R.~Rosenthal.
\newblock On the spectrum of dense random geometric graphs.
\newblock {\em The Annals of Applied Probability}, 32(3):1734--1773, 2022.

\bibitem{ahlfors1979complex}
L.~Ahlfors.
\newblock {\em Complex Analysis: An Introduction to The Theory of Analytic
  Functions of One Complex Variable}.
\newblock McGraw-Hill Education, 1979.

\bibitem{belkin2003laplacian}
M.~Belkin and P.~Niyogi.
\newblock Laplacian eigenmaps for dimensionality reduction and data
  representation.
\newblock {\em Neural computation}, 15(6):1373--1396, 2003.

\bibitem{belkin2006convergence}
M.~Belkin and P.~Niyogi.
\newblock Convergence of {L}aplacian eigenmaps.
\newblock {\em Advances in neural information processing systems}, 19, 2006.

\bibitem{betzel2019distance}
R.~F. Betzel, A.~Griffa, P.~Hagmann, and B.~Mi{\v{s}}i{\'c}.
\newblock Distance-dependent consensus thresholds for generating
  group-representative structural brain networks.
\newblock {\em Network neuroscience}, 3(2):475--496, 2019.

\bibitem{Bollobas2001}
B.~Bollobas.
\newblock {\em Random graphs}.
\newblock Number~73 in Cambridge studies in advanced mathematics. Cambridge
  University Press, 2 edition, 2001.

\bibitem{MR2462254}
C.~Bordenave.
\newblock Eigenvalues of {E}uclidean random matrices.
\newblock {\em Random Structures \& Algorithms}, 33(4):515--532, 2008.

\bibitem{Braun2006AccurateEB}
M.~L. Braun.
\newblock Accurate error bounds for the eigenvalues of the kernel matrix.
\newblock {\em J. Mach. Learn. Res.}, 7:2303--2328, 2006.

\bibitem{brennan2020phase}
M.~Brennan, G.~Bresler, and D.~Nagaraj.
\newblock Phase transitions for detecting latent geometry in random graphs.
\newblock {\em Probability Theory and Related Fields}, 178(3):1215--1289, 2020.

\bibitem{journals/rsa/BubeckDER16}
S.~Bubeck, J.~Ding, R.~Eldan, and M.~Z. Racz.
\newblock Testing for high-dimensional geometry in random graphs.
\newblock {\em Random Struct. Algorithms}, 49(3):503--532, 2016.

\bibitem{calder2022improved}
J.~Calder and N.~G. Trillos.
\newblock Improved spectral convergence rates for graph {L}aplacians on
  $\varepsilon$-graphs and k-nn graphs.
\newblock {\em Applied and Computational Harmonic Analysis}, 60:123--175, 2022.

\bibitem{cao2025spectra}
Y.~Cao and Y.~Zhu.
\newblock Spectra of high-dimensional sparse random geometric graphs.
\newblock {\em arXiv preprint arXiv:2507.06556}, 2025.

\bibitem{cheng2013}
X.~Cheng and A.~Singer.
\newblock The spectrum of random inner-product kernel matrices.
\newblock {\em Random Matrices: Theory and Applications}, 02(04):1350010, 2013.

\bibitem{cheng2022convergence}
X.~Cheng and H.-T. Wu.
\newblock Convergence of graph laplacian with knn self-tuned kernels.
\newblock {\em Information and Inference: A Journal of the IMA},
  11(3):889--957, 2022.

\bibitem{cheng2022eigen}
X.~Cheng and N.~Wu.
\newblock Eigen-convergence of {G}aussian kernelized graph {L}aplacian by
  manifold heat interpolation.
\newblock {\em Applied and Computational Harmonic Analysis}, 61:132--190, 2022.

\bibitem{chung1997spectral}
F.~R. Chung.
\newblock {\em Spectral graph theory}, volume~92.
\newblock American Mathematical Soc., 1997.

\bibitem{coifman2006diffusion}
R.~R. Coifman and S.~Lafon.
\newblock Diffusion maps.
\newblock {\em Applied and computational harmonic analysis}, 21(1):5--30, 2006.

\bibitem{davis2018consistency}
E.~Davis and S.~Sethuraman.
\newblock Consistency of modularity clustering on random geometric graphs.
\newblock {\em The Annals of Applied Probability}, 28(4):2003--2062, 2018.

\bibitem{de2009intermediate}
C.~R. De~Oliveira.
\newblock {\em Intermediate spectral theory and quantum dynamics}.
\newblock Springer, 2009.

\bibitem{devroye2011high}
L.~Devroye, A.~Gy{\"o}rgy, G.~Lugosi, and F.~Udina.
\newblock High-dimensional random geometric graphs and their clique number.
\newblock {\em ELECTRONIC JOURNAL OF PROBABILITY}, 16:2481--2508, 2011.

\bibitem{DW1}
X.~{Ding} and H.-T. {Wu}.
\newblock On the spectral property of kernel-based sensor fusion algorithms of
  high dimensional data.
\newblock {\em IEEE Transactions on Information Theory}, 67(1):640--670, 2021.

\bibitem{ding2022impact}
X.~Ding and H.-T. Wu.
\newblock Impact of signal-to-noise ratio and bandwidth on graph {L}aplacian
  spectrum from high-dimensional noisy point cloud.
\newblock {\em {IEEE} Transactions on Information Theory}, 69(3):1899--1931,
  2022.

\bibitem{do2013spectrum}
Y.~Do and V.~Vu.
\newblock The spectrum of random kernel matrices: universality results for
  rough and varying kernels.
\newblock {\em Random Matrices: Theory and Applications}, 2(03):1350005, 2013.

\bibitem{dubova2023universality}
S.~Dubova, Y.~M. Lu, B.~McKenna, and H.-T. Yau.
\newblock Universality for the global spectrum of random inner-product kernel
  matrices in the polynomial regime.
\newblock {\em arXiv preprint arXiv:2310.18280}, 2023.

\bibitem{duchemin2023random}
Q.~Duchemin and Y.~De~Castro.
\newblock Random geometric graph: Some recent developments and perspectives.
\newblock {\em High Dimensional Probability IX: The Ethereal Volume}, pages
  347--392, 2023.

\bibitem{dunson2021spectral}
D.~B. Dunson, H.-T. Wu, and N.~Wu.
\newblock Spectral convergence of graph {L}aplacian and heat kernel
  reconstruction in $\ell_\infty$ from random samples.
\newblock {\em Applied and Computational Harmonic Analysis}, 55:282--336, 2021.

\bibitem{estrada2016epidemic}
E.~Estrada, S.~Meloni, M.~Sheerin, and Y.~Moreno.
\newblock Epidemic spreading in random rectangular networks.
\newblock {\em Physical review E}, 94(5):052316, 2016.

\bibitem{estrada2016consensus}
E.~Estrada and M.~Sheerin.
\newblock Consensus dynamics on random rectangular graphs.
\newblock {\em Physica D: Nonlinear Phenomena}, 323:20--26, 2016.

\bibitem{Fan2018}
Z.~Fan and A.~Montanari.
\newblock The spectral norm of random inner-product kernel matrices.
\newblock {\em Probability Theory and Related Fields}, 173(1):27--85, 2019.

\bibitem{feller1991introduction}
W.~Feller.
\newblock {\em An introduction to probability theory and its applications,
  Volume 2}, volume~2.
\newblock John Wiley \& Sons, 1991.

\bibitem{garcia2020error}
N.~Garc{\'\i}a~Trillos, M.~Gerlach, M.~Hein, and D.~Slep{\v{c}}ev.
\newblock Error estimates for spectral convergence of the graph {L}aplacian on
  random geometric graphs toward the {L}aplace--{B}eltrami operator.
\newblock {\em Foundations of Computational Mathematics}, 20(4):827--887, 2020.

\bibitem{gray2006toeplitz}
R.~M. Gray et~al.
\newblock Toeplitz and circulant matrices: A review.
\newblock {\em Foundations and Trends{\textregistered} in Communications and
  Information Theory}, 2(3):155--239, 2006.

\bibitem{haenggi2009stochastic}
M.~Haenggi, J.~G. Andrews, F.~Baccelli, O.~Dousse, and M.~Franceschetti.
\newblock Stochastic geometry and random graphs for the analysis and design of
  wireless networks.
\newblock {\em IEEE journal on selected areas in communications},
  27(7):1029--1046, 2009.

\bibitem{hamidouche2020spectral}
M.~Hamidouche.
\newblock {\em Spectral analysis of random geometric graphs}.
\newblock PhD thesis, Universit{\'e} C{\^o}te d'Azur, 2020.

\bibitem{hein2007graph}
M.~Hein, J.-Y. Audibert, and U.~v. Luxburg.
\newblock Graph laplacians and their convergence on random neighborhood graphs.
\newblock {\em Journal of Machine Learning Research}, 8(6), 2007.

\bibitem{higham2008fitting}
D.~J. Higham, M.~Ra{\v{s}}ajski, and N.~Pr{\v{z}}ulj.
\newblock Fitting a geometric graph to a protein--protein interaction network.
\newblock {\em Bioinformatics}, 24(8):1093--1099, 2008.

\bibitem{hoff2002latent}
P.~D. Hoff, A.~E. Raftery, and M.~S. Handcock.
\newblock Latent space approaches to social network analysis.
\newblock {\em Journal of the american Statistical association},
  97(460):1090--1098, 2002.

\bibitem{MR3338323}
T.~Jiang.
\newblock Distributions of eigenvalues of large {E}uclidean matrices generated
  from {$l_p$} balls and spheres.
\newblock {\em Linear Algebra and its Applications}, 473:14--36, 2015.

\bibitem{el2010spectrum}
N.~E. Karoui.
\newblock {The spectrum of kernel random matrices}.
\newblock {\em The Annals of Statistics}, 38(1):1 -- 50, 2010.

\bibitem{kato2013perturbation}
T.~Kato.
\newblock {\em Perturbation theory for linear operators}, volume 132.
\newblock Springer Science \& Business Media, 2013.

\bibitem{koltchinskii2000random}
V.~Koltchinskii and E.~Gin{\'e}.
\newblock Random matrix approximation of spectra of integral operators.
\newblock {\em Bernoulli}, pages 113--167, 2000.

\bibitem{li2025central}
C.~Li, N.~G. Trillos, H.~Li, and L.~Suchan.
\newblock Central limit theorems for the eigenvalues of graph laplacians on
  data clouds.
\newblock {\em arXiv preprint arXiv:2507.18803}, 2025.

\bibitem{thesisone}
S.~Liu.
\newblock {\em Geometry of Random Graphs}.
\newblock PhD thesis, Princeton University, 2022.

\bibitem{liu2022testing}
S.~Liu, S.~Mohanty, T.~Schramm, and E.~Yang.
\newblock Testing thresholds for high-dimensional sparse random geometric
  graphs.
\newblock In {\em Proceedings of the 54th Annual ACM SIGACT Symposium on Theory
  of Computing}, pages 672--677, 2022.

\bibitem{lu2022equivalence}
Y.~M. Lu and H.-T. Yau.
\newblock An equivalence principle for the spectrum of random inner-product
  kernel matrices with polynomial scalings.
\newblock {\em arXiv preprint arXiv:2205.06308}, 2022.

\bibitem{maier2009optimal}
M.~Maier, M.~Hein, and U.~Von~Luxburg.
\newblock Optimal construction of k-nearest-neighbor graphs for identifying
  noisy clusters.
\newblock {\em Theoretical Computer Science}, 410(19):1749--1764, 2009.

\bibitem{mitzenmacher2017probability}
M.~Mitzenmacher and E.~Upfal.
\newblock {\em Probability and computing: Randomization and probabilistic
  techniques in algorithms and data analysis}.
\newblock Cambridge university press, 2017.

\bibitem{NagyODE}
G.~Nagy.
\newblock {\em Ordinary Differential Equations}.
\newblock Michigan State University, 2021.

\bibitem{olver2010nist}
F.~Olver, D.~Lozier, R.~Boisvert, and C.~Clark.
\newblock {\em The NIST Handbook of Mathematical Functions}.
\newblock Cambridge University Press, 2010.

\bibitem{peche2020robustness}
S.~P{\'e}ch{\'e} and V.~Perchet.
\newblock Robustness of community detection to random geometric perturbations.
\newblock {\em Advances in Neural Information Processing Systems},
  33:17827--17837, 2020.

\bibitem{penrose2003random}
M.~Penrose.
\newblock {\em Random geometric graphs}.
\newblock OUP Oxford, 2003.

\bibitem{preciado2009spectral}
V.~M. Preciado and A.~Jadbabaie.
\newblock Spectral analysis of virus spreading in random geometric networks.
\newblock In {\em Proceedings of the 48h IEEE Conference on Decision and
  Control (CDC) held jointly with 2009 28th Chinese Control Conference}, pages
  4802--4807. IEEE, 2009.

\bibitem{rai2007spectrum}
S.~Rai.
\newblock The spectrum of a random geometric graph is concentrated.
\newblock {\em Journal of Theoretical Probability}, 20(2):119--132, 2007.

\bibitem{rudin1976principles}
W.~Rudin.
\newblock {\em Principles of Mathematical Analysis}.
\newblock International series in pure and applied mathematics. McGraw-Hill,
  1976.

\bibitem{shen2022scalability}
C.~Shen and H.-T. Wu.
\newblock Scalability and robustness of spectral embedding: landmark diffusion
  is all you need.
\newblock {\em Information and Inference: A Journal of the IMA},
  11(4):1527--1595, 2022.

\bibitem{singer2006graph}
A.~Singer.
\newblock From graph to manifold {L}aplacian: The convergence rate.
\newblock {\em Applied and Computational Harmonic Analysis}, 21(1):128--134,
  2006.

\bibitem{singer2017spectral}
A.~Singer and H.-T. Wu.
\newblock Spectral convergence of the connection {L}aplacian from random
  samples.
\newblock {\em Information and Inference: A Journal of the IMA}, 6(1):58--123,
  2017.

\bibitem{strauss2007partial}
W.~Strauss.
\newblock {\em Partial Differential Equations: An Introduction}.
\newblock Wiley, 2007.

\bibitem{szeg1939orthogonal}
G.~Szeg.
\newblock {\em Orthogonal polynomials}.
\newblock American Mathematical Society colloquium publications. American
  Mathematical Society, 1939.

\bibitem{tenenbaum2000global}
J.~B. Tenenbaum, V.~d. Silva, and J.~C. Langford.
\newblock A global geometric framework for nonlinear dimensionality reduction.
\newblock {\em Science}, 290(5500):2319--2323, 2000.

\bibitem{trillos2021geometric}
N.~G. Trillos, F.~Hoffmann, and B.~Hosseini.
\newblock Geometric structure of graph {L}aplacian embeddings.
\newblock {\em Journal of Machine Learning Research}, 22(63):1--55, 2021.

\bibitem{von2008consistency}
U.~Von~Luxburg, M.~Belkin, and O.~Bousquet.
\newblock Consistency of spectral clustering.
\newblock {\em The Annals of Statistics}, pages 555--586, 2008.

\bibitem{wainwright2019high}
M.~J. Wainwright.
\newblock {\em High-dimensional statistics: A non-asymptotic viewpoint},
  volume~48.
\newblock Cambridge university press, 2019.

\bibitem{walter2013ordinary}
W.~Walter.
\newblock {\em Ordinary differential equations}, volume 182.
\newblock Springer Science \& Business Media, 2013.

\end{thebibliography}
	
\end{document}